\newcommand{\res}{\!\!\mathop{\hbox{
                                \vrule height 7pt width .5pt depth 0pt
                                \vrule height .5pt width 6pt depth 0pt}}
                                \nolimits}
\def\z{{\bf z}}
\def\a{{\bf a}}
\newtheorem{theorem}{Theorem}[section]
\newtheorem{lemma}[theorem]{Lemma}
\newtheorem{definition}[theorem]{Definition}
\newtheorem{proposition}[theorem]{Proposition}
\newtheorem{remark}[theorem]{Remark}
\newtheorem{example}[theorem]{Example}
\newtheorem*{theorem*}{\it Theorem}
\newtheorem{assumption}{Assumption}
\def\vint_#1{\mathchoice%
          {\mathop{\kern 0.2em\vrule width 0.6em height 0.69678ex depth -0.58065ex
                  \kern -0.8em \intop}\nolimits_{\kern -0.4em#1}}%
          {\mathop{\kern 0.1em\vrule width 0.5em height 0.69678ex depth -0.60387ex
                  \kern -0.6em \intop}\nolimits_{#1}}%
          {\mathop{\kern 0.1em\vrule width 0.5em height 0.69678ex
              depth -0.60387ex
                  \kern -0.6em \intop}\nolimits_{#1}}%
          {\mathop{\kern 0.1em\vrule width 0.5em height 0.69678ex depth -0.60387ex
                  \kern -0.6em \intop}\nolimits_{#1}}}
\def\vintslides_#1{\mathchoice%
          {\mathop{\kern 0.1em\vrule width 0.5em height 0.697ex depth -0.581ex
                  \kern -0.6em \intop}\nolimits_{\kern -0.4em#1}}%
          {\mathop{\kern 0.1em\vrule width 0.3em height 0.697ex depth -0.604ex
                  \kern -0.4em \intop}\nolimits_{#1}}%
          {\mathop{\kern 0.1em\vrule width 0.3em height 0.697ex depth -0.604ex
                  \kern -0.4em \intop}\nolimits_{#1}}%
          {\mathop{\kern 0.1em\vrule width 0.3em height 0.697ex depth -0.604ex
                  \kern -0.4em \intop}\nolimits_{#1}}}
\def\R{\mathbb R}
\def\N{\mathbb N}
\numberwithin{equation}{section}
\def\1{\raisebox{2pt}{\rm{$\chi$}}}
\definecolor{violet(ryb)}{rgb}{0.53, 0.0, 0.69}
\begin{document}  

\title[Evolution problems of Leray-Lions  type ]{\bf   Evolution  problems of Leray-Lions  type  with  nonhomogeneous  Neumann boundary conditions in metric random walk spaces}

\author[J. M. Maz\'on, M. Solera and J. Toledo]{Jos\'e M. Maz\'on, Marcos Solera and Juli\'an Toledo}

\address{J. M. Maz\'{o}n, M. Solera and J. Toledo: Departamento de An\'{a}lisis Matem\'atico,
Universitat de Val\`encia, Valencia, Spain.
 {\tt mazon@uv.es}, {\tt marcos.solera@uv.es} and {\tt toledojj@uv.es }
  }

\keywords{Random walks, nonlocal operators,  $p-$Laplacian, weighted graphs, Neumann boundary conditions. \\
\indent 2010 {\it Mathematics Subject Classification:}
35K55, 47H06, 47J35.}

\date{}

\begin{abstract}
 In this paper we study evolution problems of Leray-Lions type with nonhomogeneous Neumann boundary conditions in the framework of metric random walk spaces.  This covers cases with the $p$-Laplacian operator in weighted discrete graphs and nonlocal operators with nonsingular kernel in $\R^N$.
\end{abstract}

\maketitle


{ \renewcommand\contentsname{Contents}
\setcounter{tocdepth}{3}
{\small \tableofcontents}
}

\section{Introduction and preliminaries}

  A  metric random walk space $[X,d,m]$ is a Polish metric space  $(X,d)$ together with a family  $m = (m_x)_{x \in X}$ of probability measures that encode the jumps of a Markov chain. Important examples of metric random walk spaces are: locally finite weighted graphs, finite Markov chains and $[\R^N, d, m^J]$ with $d$ the Euclidean distance and
$$m^J_x(A) :=  \int_A J(x - y) d\mathcal{L}^N(y) \quad \hbox{ for every Borel set } A \subset  \R^N ,$$
where $J:\R^N\to[0,+\infty[$ is a measurable, nonnegative and radially symmetric
function with $\int J d\mathcal{L}^N=1$. See Section~\ref{semrw} for more details.

   The aim of this paper is to study $p$-Laplacian type evolution problems like the  one given in the following reference model:
\begin{equation}\label{refemod}u_t(t,x) = \displaystyle\int_{\Omega\cup \partial_m\Omega}   \vert u(y)-u(x)\vert^{p-2}(u(y) - u(x)) dm_x(y), \quad
    x\in  \Omega,\ 0<t<T,
\end{equation}
 with
$$\hbox{
\it nonhomogeneous Neumann boundary conditions,}
$$
 where $\Omega \subset X$ and
  $\partial_m\Omega=\{ x\in X\setminus \Omega : m_x(\Omega)>0 \}$  is {\it the $m$-boundary of $\Omega$}. This reference model can be regarded as the nonlocal counterpart to the classical evolution problem
$$\left\{\begin{array}{c}
\displaystyle u_t=\hbox{div}(|\nabla u|^{p-2}\nabla u),\quad x\in  U,\ 0<t<T,\\ \\ \displaystyle -|\nabla u|^{ p-2}\nabla u\cdot \eta=\varphi, \quad x\in  \partial U,\ 0<t<T,
\end{array}\right.
$$
where $U$ is a bounded smooth domain in $\mathbb{R}^n$, and $\eta$ is the outer normal   vector to $\partial U$.

\medskip

 Nonlocal diffusion problems of $p$-Laplacian type with homogeneous Neumann boundary conditions have been studied (see Examples \ref{graphs101} and \ref{graphs1} for the notation) in $[\R^N, d, m^J]$ (see, for example, \cite{BLGneu}, \cite{ElLibro}) and in graphs $[V(G), d_G, (m^G_x)]$ (see, for example, the work of   Hafiene,  Fadili   and  Elmoataz \cite{fadili01}) with the   formulation
\begin{equation}\label{sinper}u_t(t,x) = \displaystyle\int_{\Omega}   \vert u(y)-u(x)\vert^{p-2}(u(y) - u(x))dm_x(y), \quad
    x\in  \Omega,\ 0<t<T.
    \end{equation}
      Here, the homogeneous Neumann boundary conditions are understood in the sense that the jumps of the Markov chain are restricted to staying in $\Omega$ (which is consistent with what happens in the classical local model). See  also \cite[Example 2.3]{MST0} for the linear case,  i.e., $p=2$, in metric random walk spaces.

  The linear case with nonhomogeneous boundary conditions has been addressed by different authors. For example, Cortazar et~al.~in \cite{CERW1} present this case, for   non-singular kernels, as a perturbation of Problem~\eqref{sinper} ($p=2$). Moreover, in \cite{GL1},  Gunzburger and Lehoucq  develop a nonlocal vector calculus    with applications to linear nonlocal problems in which the nonlocal Neumann boundary condition  considered is,   written in the context of metric random walk spaces,
\begin{equation}\label{NeumGL}
-\int_{\Omega_m}   (u(y) - u(x)) dm_x(y) =\varphi(x), \quad x \in \partial_m\Omega,
\end{equation}
where $\Omega_m=\Omega\cup\partial_m\Omega$. Another interesting approach is proposed by Dipierro, Ros-Oton and Valdinoci in~\cite{DR-OV} for the particular case of the fractional Laplacian diffusion (although the idea can be used for other kernels) with the following Neumann boundary condition,    that we rewrite in the context of metric random walk spaces,
\begin{equation}\label{Neumann2intdro}
 -\int_\Omega     (u(x) - u(y)) dm_x(y)=\varphi(x), \quad x \in {\partial_m\Omega},
\end{equation}
 or, alternatively, if one prefers a normalized   boundary condition with respect to the underlying probability measure induced by the jump process under consideration,
$$
-\frac{1}{m_x(\Omega)}\int_\Omega     (u(x) - u(y)) dm_x(y)=\varphi(x), \quad x \in {\partial_m\Omega}.
$$
 Therefore, as remarked in~\cite{DR-OV},   when a particle exits $\Omega$ to a point $x\in\partial_m\Omega$, the mass $u(x)-\varphi(x)$ {\it comes back} into $\Omega$ following $\frac{1}{m_x(\Omega)}m_x$:
$$\frac{1}{m_x(\Omega)}\int_\Omega u(y)dm_x(y)=u(x)-\varphi(x),\quad x\in \partial_m \Omega .$$
A similar probabilistic interpretation can be given for the Neumann boundary condition~\eqref{NeumGL} but involving all of $\Omega_m$.
Anyhow, observe that the formulations~\eqref{NeumGL} and~\eqref{Neumann2intdro} have an important difference in their definition regarding the domain of integration.

 The approaches in \cite{CERW1} and \cite{DR-OV} have been  unified in~\cite{Abata}.
  Conditions like~\eqref{Neumann2intdro} were also introduced for graphs by  Chung and Yau in~\cite{Chy0} and \cite{Chy1} (see also \cite{HH1} and \cite{HHW}) for the study of the eigenvalues of the graph Laplacian operator. Namely,  let $G = (V(G), E(G))$ be a finite weighted discrete connected graph (see Example \ref{graphs1}) and $\Omega \subset  V(G)$ be a set of vertices of  $G$, their work comprises the study of the eigenvalues of the graph Laplacian operator given by
$$
\frac{1}{d_x} \sum_{y \in V}  (u(y)-u(x)) w_{xy}, \quad x \in \Omega,$$
  under the following  Neumann boundary condition:
$$ -\frac{1}{d_x} \sum_{y \in \Omega} (u(y)-u(x)) w_{xy}=0,   \quad x \in \delta\Omega,
$$
  where $\delta\Omega$ is the vertex boundary  of  $\Omega$, defined as $$\delta\Omega := \{ x \in V(G) \setminus \Omega \ : \ \exists y \in \Omega \ s.t. \ y \sim x \}, $$
  which coincides with $\partial_{m^G}\Omega$.

 We study the above formulations for the nonlinear case with nonhomoge\-neous boundary conditions in the general framework of metric random walk spaces. The main tool used for this is Nonlinear Semigroup Theory.  We will consider two types of nonhomogeneous Neumann boundary conditions, one in the line of the work of Gunzburger and Lehoucq (Problem~\eqref{Neumann4}) and the other following the approach taken by Dipierro, Ros-Oton and Valdinoci (Problem~\eqref{NNeumann4}). For the first type, we will obtain existence and uniqueness of solutions in Theorem~\ref{nsth01}  by assuming that a Poincar\'{e} type inequality, which is satisfied  by most of the important examples, holds.  Before that, in order to apply the necessary results from Nonlinear Semigroup Theory, we solve the corresponding {\it elliptic type problem} in Theorem~\ref{teo27}. For the second type, existence and uniqueness is proved in Theorem~\ref{nsth01N} without assuming that a Poincar\'{e} type inequality holds. We first solve the corresponding {\it elliptic type problem} in Theorem~\ref{CAandRangeCond}, this is done by using monotonicity techniques.  The study of these nonhomogeneous boundary conditions had, to our knowledge, not yet been done, not even for singular kernels or for particular cases covered by the general framework of metric random walk spaces.

\subsection{Metric random walk  spaces}\label{semrw}
Let $(X,d)$ be a  Polish metric  space equipped with its Borel $\sigma$-algebra.
A {\it random walk} $m$ on $X$ is a family of probability measures $m_x$ on $X$, $x \in X$, satisfying the two technical conditions: (i) the measures $m_x$  depend measurably on the point  $x \in X$, i.e., for any Borel set $A$ of $X$ and any Borel set $B$ of $\R$, the set $\{ x \in X \ : \ m_x(A) \in B \}$ is Borel; (ii) each measure $m_x$ has finite first moment, i.e. for some (hence any) $z \in X$, and for any $x \in X$ one has $\int_X d(z,y) dm_x(y) < +\infty$ (see~\cite{O}).

A {\it metric random walk  space} $[X,d,m]$  is a  Polish metric space $(X,d)$ equipped with a  random walk $m$.
  A   Radon measure $\nu$ on $X$ is {\it invariant} for the random walk $m=(m_x)$ if
$$d\nu(x)=\int_{y\in X}d\nu(y)dm_y(x).$$
The measure $\nu$ is said to be {\it reversible} if, moreover, the detailed balance condition $$dm_x(y)d\nu(x)  = dm_y(x)d\nu(y) $$ holds. Under suitable assumptions on the  metric random walk  space $[X,d,m]$, such an invariant and reversible measure $\nu$ exists and is unique. Note that the reversibility condition implies the invariance condition.

\begin{assumption}\label{as1}{\rm
When dealing with a metric random walk space $[X,d,m]$, we will assume that there exists an invariant and reversible measure for the random walk, which we will always denote by $\nu$, such that $m_x\ll \nu\quad\hbox{for all }\ x\in X$. Moreover, we will assume that the metric random walk space together with $\nu$ is $m$-connected (see~\cite{MST0}).}
\end{assumption}

    Important examples of metric random walk spaces are the following:

     \begin{example}\label{graphs101}{\rm
      Consider $(\R^N, d, \mathcal{L}^N)$, where $d$ is the Euclidean distance and $\mathcal{L}^N$ the Lebesgue measure. Let  $J:\R^N\to[0,+\infty[$ be a measurable, nonnegative and radially symmetric
function  verifying $\int_{\R^N}J(z)dz=1$. In $(\R^N, d, \mathcal{L}^N)$ we define the following random walk:
$$m^J_x(A) :=  \int_A J(x - y) d\mathcal{L}^N(y) \quad \hbox{ for every Borel set } A \subset  \R^N \hbox{ and }x\in\R^N.$$
Applying Fubini's Theorem it easy to see that the Lebesgue measure $\mathcal{L}^N$ is an invariant and  reversible measure for this random walk.
}
 \end{example}

 \begin{example}\label{graphs1}{\rm Consider  a  weighted discrete graph $G = (V(G), E(G))$, where each edge $(x,y) \in E(G)$ (we will write $x\sim y$ if $(x,y) \in E(G)$) has a positive weight $w_{xy} = w_{yx}$ assigned. Suppose further that $w_{xy} = 0$ if $(x,y) \not\in E(G)$. We then equip the graph with the standard shortest path graph distance $d_G$, that is, $d_G(x,y)$ is the minimal number of
edges which form a path connecting $x$ and $y$.  Assume that any two vertices are connected, i.e., that the graph is connected. For $x \in V(G)$ we define the weight   at the vertex $x$ as $$d_x:= \sum_{y\sim x} w_{xy} = \sum_{y\in V(G)} w_{xy}.$$
 When $w_{x,y}=1$ for every $(x,y)\in E(G)$, $d_x$ coincides with the degree of the vertex $x$ in the graph, that is,  the number of edges containing the vertex $x$. We will assume that $0 \not= d_x < +\infty$ for every $x \in V(G)$.

  For each $x \in V(G)$  we define the following probability measure
$$ m^G_x:=  \frac{1}{d_x}\sum_{y \sim x} w_{xy}\,\delta_y.
$$
 We have that $[V(G), d_G, (m^G_x)]$ is a metric random walk space. It is not difficult to see that the measure $\nu_G$ defined by
 $$\nu_G(A):= \sum_{x \in A} d_x,  \quad A \subset V(G)$$
is an invariant and  reversible measure for this random walk.

}
 \end{example}

\begin{definition}{\rm
 Given a $\nu$-measurable set $\Omega \subset X$, we define its {\it $m$-boundary} as
  $$\partial_m\Omega:=\{ x\in X\setminus \Omega : m_x(\Omega)>0 \}$$
  and its {\it $m$-closure} as $$\Omega_m:=\Omega\cup\partial_m\Omega.$$}
  \end{definition}

\begin{assumption}\label{as2}{\rm
From now on we assume that  $\Omega\subset X$ is a $\nu$-measurable set satisfying
    $$\nu(\Omega_m) < +\infty.$$}
\end{assumption}

\subsection{Completely accretive operators and semigroup theory}\label{secacc}
Since Semigroup Theory will be used along the paper, we would like to conclude this introduction with some notations and results from this theory along with results from the theory of completely accretive operators (see \cite{BCr2}, \cite{Brezis} and \cite{CrandallLiggett}, or   the Appendix in \cite{ElLibro}, for more details).
We denote by $J_0$ and $P_0$ the following sets of functions:
$$J_0 := \{ j : \R \rightarrow [0, +\infty] \ : \ \mbox{$j$ is convex,
lower semi-continuous and} \ j(0) = 0 \},$$
  $$ P_0:= \left\{q\in  C^\infty(\R) \ : \ 0\le q'\le 1, \hbox{ supp}(q')  \hbox{ is compact and }
  0\notin \hbox{supp}(q) \right\}.
  $$
Assume now that $\nu(X) < \infty$. Let $u,v\in L^1(X,\nu)$. The following relation between $u$ and $v$ is defined in \cite{BCr2}:
$$
u\ll v \ \hbox{ if, and only if,} \ \int_{X} j(u)\,  d\nu  \leq \int_{X} j(v)
 \, d\nu \ \ \hbox{for all} \ j \in J_0.
$$
An operator $\mathcal{A} \subset L^1(X,\nu)\times L^1(X,\nu)$ is called {\it completely accretive} if, for every $(u_i, v_i) \in \mathcal{A}$, $i=1,2$,
  and $\lambda >0$, one
  has that
  \begin{displaymath}
    u_1 - u_2 \ll u_1 - u_2 + \lambda (v_1 - v_2).
  \end{displaymath}

The following characterization of complete accretivity is proved in \cite{BCr2}.
\begin{proposition} An operator $\mathcal{A} \subset L^1(X,\nu)\times L^1(X,\nu)$ is  completely accretive if, for every $(u_i, v_i) \in \mathcal{A}$, $i=1,2$,
$$\int_X (v_1 - v_2) q(u_1 - u_2)d\nu \geq 0 \quad \hbox{for every} \ \ q \in P_0.$$
\end{proposition}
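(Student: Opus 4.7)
The plan is to establish the relation $w \ll z$ where $w := u_1-u_2$ and $z := w+\lambda(v_1-v_2)$, for arbitrary $\lambda>0$ and $(u_i,v_i)\in\mathcal{A}$, $i=1,2$. This amounts to showing $\int_X j(w)\,d\nu \leq \int_X j(z)\,d\nu$ for every $j\in J_0$, and the argument proceeds in two stages: first establish the inequality for the special class of $j$'s built as primitives of $P_0$ elements, then extend to all of $J_0$ by a monotone approximation.

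For the first stage, given $q\in P_0$ I define $j_q(r):=\int_0^r q(s)\,ds$; since $q$ is smooth, nondecreasing with $q(0)=0$, vanishes on a neighborhood of $0$, and is eventually constant (hence bounded), $j_q\in J_0$ and is smooth and convex. Convexity yields the pointwise inequality
$$j_q(z(x))-j_q(w(x)) \geq j_q'(w(x))\bigl(z(x)-w(x)\bigr) = \lambda\, q(w(x))\bigl(v_1(x)-v_2(x)\bigr).$$
Because $q$ is bounded with support away from $0$, $j_q$ grows at most linearly at infinity and vanishes near $0$, so $j_q(w),j_q(z)\in L^1(X,\nu)$ (as $w,z\in L^1$); similarly $q(w)(v_1-v_2)\in L^1(X,\nu)$. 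Integrating and invoking the hypothesis yields
$$\int_X j_q(w)\,d\nu \leq \int_X j_q(z)\,d\nu.$$

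For the second stage, I would approximate any $j\in J_0$ from below by a monotone sequence of the form $c_n j_{q_n}$ with $q_n\in P_0$ and $c_n>0$; the key fact is that positive scalar multiples of primitives of $P_0$ functions, under monotone pointwise suprema, exhaust $J_0$. One obtains such a sequence by truncating and mollifying a suitable rescaling of the right derivative $j'_+$, so that $c_n j_{q_n}\nearrow j$ pointwise. Monotone convergence then propagates the inequality from the class above to all $j\in J_0$, giving $w\ll z$ and hence the complete accretivity of $\mathcal{A}$.

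The main obstacle is the approximation step, because the definition of $P_0$ simultaneously imposes the Lipschitz bound $0\leq q'\leq 1$, compact support of $q'$, and vanishing of $q$ on a neighborhood of $0$; one must accommodate all three while arranging that the scaled primitives genuinely converge upward to a general $j\in J_0$ (which may take the value $+\infty$ on part of $\R$). The scaling $c_n$ absorbs the Lipschitz bound, mollification secures smoothness, and truncation arranges the support conditions, but orchestrating these so the pointwise suprema actually recover $j$ on its effective domain is a standard yet somewhat delicate convex-analysis construction.
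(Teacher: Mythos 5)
The paper does not actually prove this proposition: it is stated as a known characterization and attributed to \cite{BCr2} (B\'enilan--Crandall), so there is no in-paper proof to compare against. Your argument is correct and is essentially the standard one from that reference: for $q\in P_0$ the primitive $j_q(r)=\int_0^r q$ lies in $J_0$ (note $q\le 0$ on $(-\infty,0]$ and $q\ge 0$ on $[0,\infty)$, so $j_q\ge 0$), the convexity inequality $j_q(z)-j_q(w)\ge \lambda q(w)(v_1-v_2)$ integrates against $\nu$ thanks to the boundedness of $q$ and $w,z,v_1-v_2\in L^1(X,\nu)$, and the hypothesis gives $\int_X j_q(w)\,d\nu\le\int_X j_q(z)\,d\nu$. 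The only part you defer --- that every $j\in J_0$ is an increasing pointwise limit of functions $c_nj_{q_n}$ with $q_n\in P_0$, obtained by truncating $j'_+$ in both height and support, cutting it off near $0$ and mollifying, with the constants $c_n$ absorbing the constraint $0\le q_n'\le 1$ --- is indeed the standard convex-analysis step and works also when $j$ takes the value $+\infty$; monotone convergence on both sides then yields $w\ll z$. So the proposal is sound and follows the same route as the cited source.
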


Let $E$ be a linear subspace of $L^1(X,\nu)$.  An operator $\mathcal{A}$ defined in $E$   is m-{\it
completely accretive} in $E$ if $\mathcal{A}$ is completely accretive and $R(I
+ \lambda \mathcal{A}) = E$ for all $\lambda > 0$ (or, equivalently, for some $\lambda>0$).

A Banach space $(E, \Vert \ \Vert_E)$ with $E \subset L^1(X,\nu)$ is a {\it normal Banach space} if it has the following property: $$u \in E, \ v \in L^1(X,\nu), \ \ v \ll u \ \Rightarrow v \in E \ \ \hbox{and} \ \ \Vert v \Vert_E \leq \Vert u\Vert_E.$$
Examples of normal Banach spaces are the spaces $L^p(X,\nu)$, $1 \leq p \leq \infty$.

\begin{theorem}[\cite{BCr2}]\label{teointronls}
   If $\mathcal{A}$
 is an m-completely accretive operator in a normal Banach space  $E \subset L^1(X,\nu)$, then, for every $u_0 \in \overline{D(\mathcal{A})}$, there exists a unique mild solution
  of the problem
\begin{equation}\label{AACCPP}
\left\{
\begin{array}{l}
\displaystyle \frac{du(t)}{dt} + \mathcal{A}u(t) \ni 0, \quad t \in (0, \infty)\\[10pt]
u(0) = u_{0}.
\end{array}
\right.
\end{equation}
Moreover, if $u_0 \in D(\mathcal{A})$, then the mild solution of \eqref{AACCPP} is a strong solution,  that is, the equation in \eqref{AACCPP} is satified for almost all $t \in (0, \infty)$.

Furthermore, we have the following contraction and maximum principle in any $L^q(X,\nu)$ space, $1\le q\le +\infty$: for $u_{1,0},u_{2,0} \in \overline{D(\mathcal{A})}$ and denoting by $u_i$  the  unique mild solution
  of the problem
$$
\left\{
\begin{array}{l}
\displaystyle \frac{du_i(t)}{dt} + \mathcal{A}u_i(t) \ni 0, \quad t \in (0, \infty)\\[10pt]
u_i(0) = u_{i,0},
\end{array}
\right.
$$
$ i=1,2,$ we have
$$\Vert (u_1(t)-u_2(t))^+\Vert_{L^q(X,\nu)}\le \Vert (u_{1,0}-u_{2,0})^+\Vert_{L^q(X,\nu)}\quad \forall\, 0<t<T.
$$
In addition, if $\mathcal{A}$ is positively homogeneous of degree $0 < m \not= 1$, i.e., $\mathcal{A}(\lambda u) = \lambda^m u$ for $u \in D(\mathcal{A})$, then, for every $u_0 \in \overline{D(\mathcal{A})}$, the mild solution of \eqref{AACCPP} is a strong solution.
\end{theorem}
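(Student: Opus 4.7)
The plan is to decompose the statement into three subclaims—existence and uniqueness of the mild solution, the $L^q$-contraction and maximum principle, and strong differentiability under either the hypothesis $u_0\in D(\mathcal{A})$ or positive homogeneity—and address each by invoking or adapting standard nonlinear semigroup machinery.

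First I would verify that complete accretivity together with the normality of $E$ forces ordinary norm-accretivity of $\mathcal{A}$ on $E$. Approximating $r\mapsto(|r|-c)^+$ by functions in $J_0$, the relation $u_1-u_2\ll u_1-u_2+\lambda(v_1-v_2)$ and normality of $E$ yield
$$\|u_1-u_2\|_E\le \|u_1-u_2+\lambda(v_1-v_2)\|_E.$$
Combined with the range condition $R(I+\lambda\mathcal{A})=E$, $\mathcal{A}$ is $m$-accretive in the Banach space $E$, so the Crandall--Liggett generation theorem applies and yields, for every $u_0\in\overline{D(\mathcal{A})}$, a unique mild solution
$$u(t)=\lim_{n\to\infty}\bigl(I+(t/n)\mathcal{A}\bigr)^{-n}u_0,$$
with convergence uniform on compact time intervals.

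For the $L^q$-contraction and maximum principle, the natural strategy is to first prove the analogous estimate at the level of the resolvent $J_\lambda=(I+\lambda\mathcal{A})^{-1}$ and then transport it through the exponential formula. Given $u_i=J_\lambda u_{i,0}$ and $v_i=\lambda^{-1}(u_{i,0}-u_i)\in\mathcal{A}u_i$, I would apply the $P_0$-characterization of complete accretivity to a sequence $q_\varepsilon\in P_0$ approximating $r\mapsto (r^+)^{q-1}$ for $1\le q<\infty$ (the case $q=\infty$ follows by sending $q\to\infty$). This produces $\|(J_\lambda u_{1,0}-J_\lambda u_{2,0})^+\|_{L^q(X,\nu)}\le \|(u_{1,0}-u_{2,0})^+\|_{L^q(X,\nu)}$; iterating this on the $n$-fold composition $J_{t/n}^{\,n}$ and passing to the Crandall--Liggett limit preserves the bound for the mild solutions.

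The upgrade from mild to strong solutions is the step I expect to be the main technical obstacle, since a Banach space $E\subset L^1(X,\nu)$ need not possess the Radon--Nikodym property and Lipschitz curves into such a space are a priori only weakly differentiable. When $u_0\in D(\mathcal{A})$ with $v_0\in\mathcal{A}u_0$, $m$-accretivity gives $\|u(t+h)-u(t)\|_E\le h\|v_0\|_E$; complete accretivity refines this to the $\ll$-bound $u(t+h)-u(t)\ll h v_0$, which supplies equi-integrability of the difference quotients across all $L^q(X,\nu)$ and thus a.e.\ strong differentiability, with the derivative in $-\mathcal{A}u(t)$. In the positively homogeneous case of degree $m\ne 1$, I would exploit the scaling $u_\lambda(t):=\lambda\,u(\lambda^{m-1}t)$, which is itself a mild solution with initial datum $\lambda u_0$; applying the $L^1$-contraction established above to $u$ and $u_{1+\varepsilon}$ and Taylor-expanding in $\varepsilon$ gives an estimate of the form $\|u(t+h)-u(t)\|_{L^1(X,\nu)}\le Ch/t$ on $(0,\infty)$, so $u$ is locally Lipschitz in time for every $u_0\in\overline{D(\mathcal{A})}$, after which the equi-integrability argument from the preceding case again upgrades this to a.e.\ strong differentiability.
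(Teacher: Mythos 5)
This theorem is not proved in the paper at all: it is quoted verbatim from B\'enilan--Crandall \cite{BCr2} (the citation appears in the theorem header), so there is no in-paper argument to compare against. Judged on its own, your proposal is a faithful reconstruction of the architecture of the proof in \cite{BCr2}: normality of $E$ turns the relation $u_1-u_2\ll u_1-u_2+\lambda(v_1-v_2)$ into accretivity in $E$, Crandall--Liggett then gives the unique mild solution, and the $L^q$ contraction/maximum principle is correctly obtained at the resolvent level (one can even avoid the $P_0$-approximation by applying $j(r)=((r)^+)^q\in J_0$ directly to $J_\lambda f-J_\lambda g\ll f-g$) and transported through the exponential formula. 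The scaling $u_\lambda(t)=\lambda u(\lambda^{m-1}t)$ for the homogeneous case is also the right device and does yield the $Ch/t$ time-regularity.

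The one place where you assert rather than prove is precisely the step that constitutes the substantive content of \cite{BCr2} beyond Crandall--Liggett: the passage from ``the difference quotients of $t\mapsto u(t)$ are $\ll$-dominated by a fixed $v_0\in L^1(X,\nu)$'' to ``$u$ is differentiable a.e.\ in norm.'' Equi-integrability by itself does not give a.e.\ strong differentiability of a Lipschitz curve in $L^1$; the argument needs two further ingredients: (i) by Dunford--Pettis, the set $\{w\in L^1(X,\nu): w\ll v_0\}$ is weakly compact and convex, and (ii) every weakly compact convex subset of a Banach space has the Radon--Nikodym property, so a Lipschitz curve whose difference quotients remain in such a set is differentiable a.e.; one then uses the closedness of the m-accretive operator $\mathcal{A}$ to identify the derivative with an element of $-\mathcal{A}u(t)$. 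Since you explicitly flagged the RNP issue and then stepped over it, this is a gap in execution rather than a wrong approach, but it should be filled (or the relevant lemma of \cite{BCr2} cited) for the proof to be complete; the same lemma is what closes the positively homogeneous case after the local Lipschitz bound on $(0,\infty)$ is established.
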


\section{The  nonhomogeneous  Neumann problem for  evolution problems of   Leray-Lions type}

In this section we will give our main results concerning the existence and uniqueness of solutions for the nonhomogeneous  Neumann problem for  evolution problems of   Leray-Lions type in metric random walk spaces. We consider two different types of Neumann boundary conditions. We start with the definition of nonlocal Leray-Lions operator.

\subsection{Nonlocal Leray--Lions operators}
 Recall the definition of the generalized product measure $\nu \otimes m_x$ (see, for instance, \cite[Definition 2.2.7]{Ambrosio}), it is defined as the measure in $X \times X$ given by
 \begin{equation}\label{forarb002} \nu \otimes m_x(U) := \int_X   \int_X \1_{U}(x,y) dm_x(y)   d\nu(x)\quad\hbox{for }  U\in \mathcal{B}(X\times X),
 \end{equation} where it is required that the map $x \mapsto m_x(E)$ is $\nu$-measurable for any Borel set $E \in \mathcal{B}(X)$.
 Moreover, it holds that $$\int_{X \times X} g  d(\nu \otimes m_x)   = \int_X   \int_X g(x,y) dm_x(y)  d\nu(x)$$
for every  $g\in L^1(X\times X,\nu\otimes m_x)$.

For $1<p<+\infty$, let us consider a function $\a_p:X\times X\times \mathbb{R}\to \mathbb{R}$ such that
$$ (x,y)\mapsto \a_p(x,y,r) \quad \hbox{is $\nu\otimes m_x$-measurable for all $r$;}
$$
\begin{equation}\label{ll002} \hbox{$\a_p(x,y,.)$  is continuous for $\nu\otimes m_x$-a.e $(x,y)\in X\times X$;}
\end{equation}
\begin{equation}\label{llo4}
 \a_p(x,y,r)=-\a_p(y,x,-r) \quad \hbox{for $\nu\otimes m_x$-a.e $(x,y)\in X\times X$ and for all $r$;}
\end{equation}
\begin{equation}\label{llo3}
(\a_p(x,y,r)-\a_p(x,y,s))(r-s) > 0 \quad \hbox{for $\nu\otimes m_x$-a.e. $(x,y)$ and for all $r\neq s$;}
\end{equation}
there exist constants $c,C>0$ such that
\begin{equation}\label{llo1}
|\a_p(x,y,r)|\le C\left(1+|r|^{p-1}\right) \quad \hbox{for $\nu\otimes m_x$-a.e. $(x,y)\in X\times X$ and for all $r$,}
\end{equation}
and
\begin{equation}\label{llo2}
\a_p(x,y,r)r\ge c\vert r \vert^p \quad \hbox{for $\nu\otimes m_x$-a.e. $(x,y)\in X\times X$ and for all $r$.}
\end{equation}

This last condition implies that
$$
\a_p(x,y,0)=0 \ \hbox{ and } \   \hbox{sign}_0(\a_p(x,y,r))=\hbox{sign}_0(r) \quad \hbox{for $\nu\otimes m_x$-a.e. $(x,y)\in X\times X$}.
$$

Given a function $u : X \rightarrow \R$ we define its nonlocal gradient $\nabla u: X \times X \rightarrow \R$ as
$$\nabla u (x,y):= u(y) - u(x) \quad \forall \, x,y \in X.$$
For a function $\z : X \times X \rightarrow \R$, its {\it $m$-divergence} ${\rm div}_m \z : X \rightarrow \R$ is defined as
 $$({\rm div}_m \z)(x):= \frac12 \int_{X} (\z(x,y) - \z(y,x)) dm_x(y).$$

An example of  a function $\a_p$ satisfying the above assumptions  is
$$\a_p(x,y,r):=\frac{\varphi(x)+\varphi(y)}{2}|r|^{p-2}r,$$ being  $\varphi:X\rightarrow \R$ a $\nu$-measurable function satisfying $0<c\le \varphi\le C$ where $c$ and $C$ are constants.
In particular, if $\varphi=1$, we have that
$$\begin{array}{l}\displaystyle
\hbox{div}_m\big(\a_p(x,y,u(y)-u(x)\big)(x)= \frac{1}{2}\int_{X}  |u(y)-u(x)|^{p-2}(u(y)-u(x))  dm_x(y)
\end{array}
$$
is the $p$-Laplacian operator on the metric random walk space.

\medskip

Let us point out that,
for the random walk  $m^J$,  Karami, Sadik and  Ziad, in \cite{KSZ}, study a  homogeneous Neumann problem of the type~\eqref{sinper} as a nonlocal model for denoising. They take $\a_p(x,y,r) = \vert r \vert^{p(x,y)-2} r$ with $p(x,y)$ continuous, symmetric and satisfying
$$1 <p^-_1 \leq  \inf_{y \in \Omega} p(x,y) \leq p^-_2 < +\infty, \quad 1 <p^+_1 \leq  \sup_{y \in \Omega} p(x,y) \leq p^+_2 < +\infty, \quad \forall \, x \in \Omega,$$
Furthermore,
Galiano in \cite{Galiano}, studies this type of homogenous Neumann problem for $\a_p(x,y,r)$   bounded in $(x,y)$, continuous in $r$ and satisfying \eqref{llo4} and \eqref{llo3}.

\subsection{Neumann boundary operators}\label{sectcom}

  We define   the   {\it nonlocal Neumann boundary operator} (of  Gunzburger--Lehoucq type) by
$$
\mathcal{N}^{\a_p}_1 u(x):=  -\int_{\Omega_m} \a_p(x,y,u(y)-u(x)) dm_x(y)    \quad \hbox{for } x \in \partial_m\Omega,
$$
and the   {\it nonlocal Neumann boundary  operator} (of Dipierro--Ros-Oton--Valdinoci type) as
$$
\mathcal{N}^{\a_p}_2 u(x):=  -\int_{\Omega} \a_p(x,y,u(y)-u(x)) dm_x(y)    \quad \hbox{for } x \in \partial_m\Omega .
$$

For each of these Neumann boundary operators our main goal is to study the evolution problem
\begin{equation}\label{sabore001}
\left\{ \begin{array}{ll} u_t(t,x) = \hbox{div}_m\a_p u(t,x),    &x\in  \Omega,\ 0<t<T, \\ \\ \mathcal{N}^{\a_p}_\mathbf{j} u(t,x) = \varphi(x),    &x\in\partial_m\Omega, \  0<t<T, \\ \\ u(0,x) = u_0(x),    &x\in \Omega, \end{array} \right.
\end{equation}
$j=1$, $2$, and the following associated
  Neumann  problem
 \begin{equation}\label{Neumann1}
\left\{ \begin{array}{ll} u(x)-\hbox{div}_m\a_p u(x) = \varphi(x), \quad &  x\in\Omega, \\ [8pt]  \mathcal{N}^{\a_p}_\mathbf{j} u(x) = \varphi(x),  \quad & x\in\partial_m\Omega . \end{array} \right.
\end{equation}
In~\eqref{sabore001} and~\eqref{Neumann1} we have used the following simplified notation
 $$\hbox{div}_m \a_p u(t,x):=\hbox{div}_m\big(\a_p(x,y,u(t,y)-u(t,x)\big)(x)$$
and
$$\hbox{div}_m \a_p u(x):=\hbox{div}_m\big(\a_p(x,y,u(y)-u(x)\big)(x).$$

Observe that $\hbox{div}_m\a_p$ is
  {\it a  kind of Leray--Lions operator for the random walk $m$}. On account of~\eqref{llo4}, we have that
$$
\begin{array}{c}
\displaystyle\hbox{div}_m\a_p u (x) =\frac12 \int_{X}\big(\a_p(x,y,u(y)-u(x)) - \a_p(y,x,u(x)-u(y))\big) dm_x(y)
\\ \\
\displaystyle
=\int_X \a_p(x,y,u(y)-u(x)) dm_x(y).
\end{array}
$$
Moreover, by the reversibility of $\nu$ with respect to $m$, we have that $m_x(X\setminus \Omega_m)=0$ for $\nu$-a.e. $x\in \Omega$. Indeed,
$$\displaystyle \int_{\Omega}m_x(X\setminus \Omega_m)d\nu(x)=\int_{X\setminus \Omega_m}m_x(\Omega)d\nu(x)=0  . $$
Consequently,
$$
 \hbox{div}_m\a_p u (x) =\int_{\Omega_m} \a_p(x,y,u(y)-u(x)) dm_x(y) \quad  \hbox{for every $x\in\Omega$.}
$$

The following integration by parts formula follows by the reversibility of $\nu$ with respect to~$m$.

  \begin{lemma}\label{intpart}  Let $q\ge 1$. If $Q \subset X \times X$ is a symmetric set (i.e., $(x,y) \in Q \iff (y,x) \in Q$) and $\Psi : Q \rightarrow \R$  is a  $\nu\otimes m_x$-a.e. antisymmetric function (i.e., $\Psi(x,y) = - \Psi(y,x)$  for $\nu\otimes m_x$-a.e. $(x,y)\in Q$) with  $\Psi \in L^{q}(Q,  \nu\otimes m_x )$ and $u\in L^{q'}(X,\nu)$ then
  $$\int_Q \Psi(x,y) u(x) d(\nu\otimes m_x)(x,y) = - \frac{1}{2} \int_Q \Psi(x,y)(u(y) - u(x)) d(\nu\otimes m_x)(x,y).$$
In particular, if $\Psi \in L^1(Q, \nu\otimes m_x )$,
$$\int_Q \Psi(x,y) d(\nu\otimes m_x)(x,y) = 0.$$

 \end{lemma}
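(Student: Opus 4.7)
The plan is to split the right-hand side into two pieces, use the reversibility of $\nu$ to exchange the roles of $x$ and $y$ in one of them, and then exploit the antisymmetry of $\Psi$ to collapse everything to the left-hand side. First I would check that all the integrals involved are absolutely convergent: by Hölder's inequality on the product measure, together with the fact that $m_x$ is a probability measure and $Q \subset X\times X$, one estimates
\begin{equation*}
\int_Q |\Psi(x,y) u(x)|\, d(\nu\otimes m_x)(x,y) \le \|\Psi\|_{L^q(Q,\nu\otimes m_x)}\, \|u\|_{L^{q'}(X,\nu)},
\end{equation*}
so everything is finite and Fubini applies, and the same bound holds for the integrand $\Psi(x,y)u(y)$ after we transfer integration in $x$ against $m_x$ onto integration in $y$ via reversibility.

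Next I would write
\begin{equation*}
-\tfrac{1}{2}\int_Q \Psi(x,y)(u(y)-u(x))\,d(\nu\otimes m_x) = -\tfrac{1}{2}\int_Q \Psi(x,y) u(y)\,d(\nu\otimes m_x) + \tfrac{1}{2}\int_Q \Psi(x,y) u(x)\,d(\nu\otimes m_x).
\end{equation*}
The central step is to transform the first integral on the right. Using the reversibility identity $dm_x(y)\,d\nu(x) = dm_y(x)\,d\nu(y)$ together with the symmetry of $Q$ (so that the substitution $(x,y)\mapsto(y,x)$ maps $Q$ to itself), we get
\begin{equation*}
\int_Q \Psi(x,y) u(y)\,d(\nu\otimes m_x)(x,y) = \int_Q \Psi(y,x) u(x)\,d(\nu\otimes m_x)(x,y) = -\int_Q \Psi(x,y) u(x)\,d(\nu\otimes m_x)(x,y),
\end{equation*}
where in the last equality I used the antisymmetry $\Psi(y,x)=-\Psi(x,y)$. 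Substituting back yields the stated formula.

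For the second assertion, I would take $u\equiv 1$ (which is admissible since $\nu(\Omega_m)<\infty$ allows the case one cares about in applications, but in fact the argument is self-contained: the same change-of-variables step gives $\int_Q \Psi\, d(\nu\otimes m_x) = -\int_Q \Psi\, d(\nu\otimes m_x)$ directly, without needing $u$ at all), and conclude that the integral equals zero. There is really no obstacle here; the only thing to be careful about is justifying the change of variables via reversibility on the product measure, which is exactly where Assumption~\ref{as1} is used. All steps are routine once that symmetry is invoked.
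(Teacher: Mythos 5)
Your proof is correct and is exactly the argument the paper intends: the lemma is stated with the remark that it ``follows by the reversibility of $\nu$ with respect to $m$,'' and your combination of the reversibility identity $dm_x(y)\,d\nu(x)=dm_y(x)\,d\nu(y)$, the symmetry of $Q$, and the antisymmetry of $\Psi$ (with the H\"older bound justifying absolute convergence) is precisely that argument. Your self-contained derivation of the second assertion from $I=-I$ is also the right way to avoid assuming $1\in L^{q'}(X,\nu)$.
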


Consequently, we obtain the following integration by parts formula.
  Let $$Q_1=\Omega_m\times\Omega_m$$
  and
$$Q_2=(\Omega_m\times\Omega_m)\setminus(\partial_m\Omega\times\partial_m\Omega).$$

  \begin{proposition}\label{prop23} Let $\mathbf{j}\in\{1,2\}$. Let $u$ be a $\nu$-measurable function such that
   $$(x,y)\mapsto \a_p(x,y,u(y)-u(x))\in L^{q}(Q_\mathbf{j},\nu\otimes m_x)$$ and let $w \in L^{q'}(\Omega_m)$, then
$$ \begin{array}{l}
 \displaystyle
-\int_\Omega  \hbox{\rm div}_m\a_p u (x)w(x)d\nu(x) +\int_{\partial_m\Omega}  \mathcal{N}^{\a_p}_\mathbf{j} u(x) w(x)d\nu(x) \\ \\
\qquad\qquad\qquad= \displaystyle\frac{1}{2} \int_{Q_\mathbf{j}} \a_p(x,y,u(y)-u(x)) (w(y) - w(x)) d(\nu\otimes m_x)(x,y) .
\end{array}
$$
 \end{proposition}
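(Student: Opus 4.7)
The plan is to reduce the identity to a direct application of Lemma~\ref{intpart} with $\Psi(x,y):=\a_p(x,y,u(y)-u(x))$, which is $\nu\otimes m_x$-a.e.\ antisymmetric by~\eqref{llo4} and belongs to $L^{q}(Q_\mathbf{j},\nu\otimes m_x)$ by hypothesis. The central step is to repackage the left-hand side of the claimed identity as a single integral of $-\Psi(x,y)\,w(x)$ over the symmetric set $Q_\mathbf{j}$; once this repackaging is done, the lemma (with its abstract test function played by $w$, extended by zero outside $\Omega_m$) immediately delivers the half-difference expression on the right.

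To carry this out I first unfold the definitions. Invoking the consequence of reversibility, already derived in the excerpt, that $m_x(X\setminus\Omega_m)=0$ for $\nu$-a.e.\ $x\in\Omega$, Fubini rewrites the bulk term as
$$-\int_\Omega\hbox{\rm div}_m\a_p u(x)\,w(x)\,d\nu(x)=-\int_{\Omega\times\Omega_m}\Psi(x,y)\,w(x)\,d(\nu\otimes m_x)(x,y),$$
while the boundary term becomes
$$\int_{\partial_m\Omega}\mathcal{N}^{\a_p}_1 u(x)\,w(x)\,d\nu(x)=-\int_{\partial_m\Omega\times\Omega_m}\Psi(x,y)\,w(x)\,d(\nu\otimes m_x)$$
for $\mathbf{j}=1$, and the analogous integral over $\partial_m\Omega\times\Omega$ for $\mathbf{j}=2$. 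The integrability of $\Psi(x,y)w(x)$ on $Q_\mathbf{j}$ needed to justify Fubini follows from H\"older together with the fact that $m_x$ is a probability measure, giving $\|w\|_{L^{q'}(Q_\mathbf{j},\nu\otimes m_x)}\le\|w\|_{L^{q'}(\Omega_m,\nu)}<\infty$. Summing the bulk and boundary contributions and using the set identities
$$(\Omega\times\Omega_m)\cup(\partial_m\Omega\times\Omega_m)=\Omega_m\times\Omega_m=Q_1,$$
$$(\Omega\times\Omega_m)\cup(\partial_m\Omega\times\Omega)=(\Omega_m\times\Omega_m)\setminus(\partial_m\Omega\times\partial_m\Omega)=Q_2,$$
the whole left-hand side collapses in both cases to $-\int_{Q_\mathbf{j}}\Psi(x,y)\,w(x)\,d(\nu\otimes m_x)(x,y)$.

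Each $Q_\mathbf{j}$ is manifestly symmetric since both $\Omega_m\times\Omega_m$ and $\partial_m\Omega\times\partial_m\Omega$ are. Applying Lemma~\ref{intpart} to the antisymmetric $\Psi$ on $Q_\mathbf{j}$ tested against $w$ then converts this single integral into $\tfrac12\int_{Q_\mathbf{j}}\Psi(x,y)(w(y)-w(x))\,d(\nu\otimes m_x)$, which is exactly the required right-hand side. The only point that calls for any real care is the region identity used for $\mathbf{j}=2$, namely that removing the boundary-boundary block $\partial_m\Omega\times\partial_m\Omega$ from $\Omega_m\times\Omega_m$ is precisely what compensates for the fact that $\mathcal{N}^{\a_p}_2$ integrates only over $\Omega$ rather than over $\Omega_m$; but this is a routine set decomposition, and I do not anticipate any genuine obstacle beyond this bookkeeping.
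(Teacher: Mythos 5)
Your proposal is correct and follows essentially the same route as the paper: both apply Lemma~\ref{intpart} with $\Psi(x,y)=\a_p(x,y,u(y)-u(x))$ on the symmetric set $Q_\mathbf{j}$ and identify $-\int_{Q_\mathbf{j}}\Psi(x,y)w(x)\,d(\nu\otimes m_x)$ with the bulk-plus-boundary left-hand side; you merely traverse the chain of equalities in the opposite direction and spell out the set decompositions and the use of $m_x(X\setminus\Omega_m)=0$ that the paper leaves implicit.
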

\begin{proof} Applying Lemma \ref{intpart}, with $\Psi(x,y)  = \a_p(x,y,u(y)-u(x))$ and with $Q= Q_\mathbf{j}$, we have
$$\begin{array}{l}\displaystyle\frac{1}{2}\int_{Q_\mathbf{j}} \a_p(x,y,u(y)-u(x)) (w(y) - w(x)) d(\nu\otimes m_x)(x,y)
\\ \\ \displaystyle
\quad= - \int_{Q_\mathbf{j}} \a_p(x,y,u(y)-u(x)) w(x) d(\nu\otimes m_x)(x,y)
 \\ \\ \displaystyle
\quad = -\int_\Omega   \hbox{div}_m\a_p u (x)w(x)d\nu(x) +  \int_{\partial_m\Omega}   \mathcal{N}^{\a_p}_\mathbf{j} u(x)w(x) d\nu(x).
 \end{array}
 $$
\end{proof}

As a corollary, since $\nu(\Omega_m) < +\infty$, we have the following nonlocal form of the divergence theorem.

  \begin{proposition} Let $\mathbf{j}\in\{1,2\}$. If $u \in L^p(\Omega_m,\nu)$, then
$$
 \int_\Omega \hbox{div}_m\a_p u (x)d\nu(x) =   \int_{\partial_m\Omega} \mathcal{N}^{\a_p}_\mathbf{j} u(x) d\nu(x).
$$
 \end{proposition}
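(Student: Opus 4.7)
The statement is precisely the specialization of the integration-by-parts identity of Proposition~\ref{prop23} to the constant test function $w\equiv 1$: with this choice the nonlocal gradient $w(y)-w(x)$ vanishes identically, so the right-hand side of that identity collapses to $0$, leaving exactly the claimed equality
$$\int_\Omega \hbox{div}_m\a_p u(x)\, d\nu(x) = \int_{\partial_m\Omega}\mathcal{N}^{\a_p}_{\mathbf{j}} u(x)\, d\nu(x).$$
Hence the only real task is to check that the hypotheses of Proposition~\ref{prop23} are met for this $w$ and the given $u\in L^p(\Omega_m,\nu)$.

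Since $\nu(\Omega_m)<+\infty$, the function $w\equiv 1$ lies in $L^{q'}(\Omega_m,\nu)$ for $q'=\infty$, so I would work with the dual exponent $q=1$ and verify that
$$(x,y)\mapsto \a_p(x,y,u(y)-u(x))\in L^{1}(Q_{\mathbf{j}},\nu\otimes m_x).$$
Using the growth hypothesis \eqref{llo1} and a standard elementary inequality gives the pointwise bound
$$|\a_p(x,y,u(y)-u(x))|\le C\bigl(1+|u(y)-u(x)|^{p-1}\bigr)\le C'\bigl(1+|u(x)|^{p-1}+|u(y)|^{p-1}\bigr).$$
The constant term integrates to at most $C'(\nu\otimes m_x)(Q_{\mathbf{j}})\le C'\nu(\Omega_m)<+\infty$, since each $m_x$ is a probability measure. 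The term in $|u(x)|^{p-1}$ is handled by integrating $y$ out (again because $m_x$ is a probability) and applying H\"older to $|u|^{p-1}\in L^{1}(\Omega_m,\nu)$, which holds by $u\in L^p(\Omega_m,\nu)$ and $\nu(\Omega_m)<+\infty$. The term in $|u(y)|^{p-1}$ reduces to the previous one using the reversibility of $\nu$ with respect to $m$ together with the fact that both $Q_1=\Omega_m\times\Omega_m$ and $Q_2=(\Omega_m\times\Omega_m)\setminus(\partial_m\Omega\times\partial_m\Omega)$ are symmetric under $(x,y)\mapsto(y,x)$, so the change of variables is legitimate.

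Once the integrability is in place, Proposition~\ref{prop23} applies with $w\equiv 1$ and yields the conclusion immediately. There is no genuine obstacle here; the only mildly technical point is the $L^1$ verification, and this is settled uniformly in $\mathbf{j}\in\{1,2\}$ by the symmetry/reversibility argument above.
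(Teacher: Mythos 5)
Your proof is correct and follows exactly the route the paper intends: the result is stated as an immediate corollary of Proposition~\ref{prop23} with the constant test function $w\equiv 1$, which is admissible because $\nu(\Omega_m)<+\infty$. Your additional verification that $\a_p(x,y,u(y)-u(x))\in L^1(Q_{\mathbf{j}},\nu\otimes m_x)$ via the growth bound \eqref{llo1}, the probability of $m_x$, and the reversibility of $\nu$ is the routine check the paper leaves implicit, and it is carried out correctly.
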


\begin{remark}\label{remmon}
Let us see, formally, the way in which we will be using Proposition~\ref{prop23} in what follows. Suppose that we are in the following situation:
$$
\left\{ \begin{array}{ll} -\hbox{div}_m\a_p u(x) = f(x), \quad & x\in\Omega, \\ [8pt] \mathcal{N}^{\a_p}_\mathbf{j} u(x) = g(x), \quad & x \in\partial_m\Omega, \end{array} \right.
$$
for $\mathbf{j}=1$ or $2$. Then, multiplying the first equation by a function $w$, defined in $\Omega_m$ and with adequate integrability, integrating over $\Omega$ and using the integration by parts formula, we get
$$ \begin{array}{l}
 \displaystyle
  \frac{1}{2} \int_{Q_\mathbf{j}} \a_p(x,y,u(y)-u(x)) (w(y) - w(x)) d(\nu\otimes m_x)(x,y) \\ \\ \qquad \displaystyle = \int_\Omega f(x)w(x)d\nu(x) +\int_{\partial_m\Omega} g(x)w(x) d\nu(x) .
\end{array}
$$
Moreover, as a consequence of these computations, if
$$
\left\{ \begin{array}{ll} -\hbox{div}_m\a_p u_i(x) = f_i(x), \quad & x\in\Omega, \\ [8pt] \mathcal{N}^{\a_p}_\mathbf{j} u_i(x) = g_i(x), \quad & x \in\partial_m\Omega, \end{array} \right.
$$
 $i=1,2$, then, for a nondecreasing function $T:\mathbb{R}\to \mathbb{R}$, we obtain
$$ \begin{array}{l}
 \displaystyle
 \int_\Omega (f_1(x)-f_2(x))T(u_1(x)-u_2(x))d\nu(x) +\int_{\partial_m\Omega} (g_1(x)-g_2(x))T(u_1(x)-u_2(x)) d\nu(x)  \\ \\
 \displaystyle
 \qquad=\frac{1}{2} \int_{Q_\mathbf{j}} \big(\a_p(x,y,u_1(y)-u_1(x))-\a_p(x,y,u_2(y)-u_2(x))\big) \times\qquad\qquad
  \\[14pt] \displaystyle
  \hfill\qquad\qquad \times\big(T(u_1(y) - u_2(y)) -T(u_1(x)-u_2(x))\big)d(\nu\otimes m_x)(x,y)\,\ge 0 .
\end{array}
$$

\noindent  Indeed, for $\nu\otimes m_x$--a.e. $(x,y)\in Q_\mathbf{j}$ ( $\mathbf{j}=1$ or $2$) satisfying $u_1(y)-u_1(x)\geq u_2(y)-u_2(x)$, \ by \eqref{llo3}, we have that
$$\a_p(x,y,u_1(y)-u_1(x))-\a_p(x,y,u_2(y)-u_2(x))\geq 0.$$
Moreover, for these $(x,y)$, since $T$ is non-decreasing and $u_1(y) - u_2(y)\geq u_1(x)-u_2(x)$,
$$T(u_1(y) - u_2(y)) -T(u_1(x)-u_2(x))\geq 0 .$$
Similarly, for $\nu\otimes m_x$--a.e. $(x,y)$ satisfying $u_1(y)-u_1(x)\leq u_2(y)-u_2(x)$, we get
$$\big(\a_p(x,y,u_1(y)-u_1(x))-\a_p(x,y,u_2(y)-u_2(x))\big)\times \big(T(u_1(y) - u_2(y)) -T(u_1(x)-u_2(x))\big)\geq 0 . $$
  \end{remark}

\medskip

\subsection{Neumann boundary conditions of   Gunzburger--Lehoucq type}\label{aus23}

In this subsection we study the  problem
\begin{equation}\label{Neumann4}
\left\{ \begin{array}{ll} u_t(t,x) = \hbox{div}_m\a_p u(t,x),    &x\in  \Omega,\ 0<t<T, \\ \\ \mathcal{N}^{\a_p}_1 u(t,x) = \varphi(x),    &x\in\partial_m\Omega, \  0<t<T, \\ \\ u(0,x) = u_0(x),    &x\in \Omega. \end{array} \right.
\end{equation}
In addition to Assumptions~\ref{as1} and \ref{as2} we will also work under the following assumption.
\begin{assumption}\label{as3} {\rm We assume that \rm $[\Omega_m,d,m,\nu]$ satisfies the following {\it Poincar\'{e}  type inequality}: there exists a constant $\lambda>0$ such that, for any $u \in L^p(\Omega_m,\nu)$,
\begin{equation}\label{agost004} \left\Vert  u \right\Vert_{L^p(\Omega_m,\nu)}  \leq \lambda\left(\left(\int_{Q_1} |u(y)-u(x)|^p d(\nu\otimes m_x)(x,y) \right)^{\frac1p}+\left| \int_{\Omega } u\,d\nu\right|\right).
\end{equation}
or, equivalently,
$$ \left\Vert  u - \frac{1}{\nu(\Omega)} \int_\Omega u d\nu \right\Vert_{L^p(\Omega_m,\nu)}  \leq \lambda\left(\int_{Q_1} |u(y)-u(x)|^p d(\nu\otimes m_x)(x,y) \right)^{\frac1p}.
$$
}
\end{assumption}

It is shown in~\cite{MST2} (see also~\cite{BLGneu} and~\cite{ElLibro}) that, under rather general conditions, there are metric random walk spaces satisfying this kind of inequality. Note that the proof of the existence of the Poincar\'e type inequality in~\cite{MST2} must be slightly modified in order to cover the inequality considered in \eqref{agost004}.

 To study~\eqref{Neumann4} we will use the Nonlinear Semigroup Theory, to this end we define the  following operator in   $ L^1(\Omega,\nu)\times L^1(\Omega,\nu)$ associated to the problem. Observe that the space of definition is $L^1(\Omega,\nu)$  and not $ L^1(\Omega_m,\nu)$.

\begin{definition} {\rm  Let $\varphi\in L^1(\partial_m\Omega,\nu)$. We say that $(u,v) \in B^m_{\a_p,\varphi}$ if $ u,v \in L^1(\Omega,\nu)$
  and there exists  $ \overline u\in L^p(\Omega_m,\nu)$ (that we will denote equally as $u$) such that $\overline{u}_{\vert \Omega} = u$,
$$
(x,y)\mapsto a_p(x,y,u(y)-u(x))\in L^{p'}(Q_1,\nu\otimes m_x)
$$
and
$$
\left\{ \begin{array}{ll} -\hbox{div}_m\a_p u = v \quad &\hbox{in} \ \ \Omega, \\ [8pt] \mathcal{N}^{\a_p}_1 u = \varphi \quad &\hbox{in} \ \ \partial_m\Omega; \end{array} \right.
$$
that is,
$$
v(x) = -  \int_{\Omega_m} \a_p(x,y,u(y)-u(x)) dm_x(y), \quad x \in \Omega,
$$
and
$$
\varphi(x) = -\int_{\Omega_m} \a_p(x,y,u(y)-u(x)) dm_x(y), \quad x \in \partial_m\Omega.
$$
}
\end{definition}

\begin{remark}\label{porsi001}{
Observe that if $(u,v) \in B^m_{\a_p,\varphi}$  then $v\in L^{p'}(\Omega,\nu)$ and, moreover,
$$\int_\Omega v(x)d\nu(x)+\int_{\partial_m\Omega}\varphi(x) d\nu(x)=0.$$}
\end{remark}

  \begin{theorem}\label{teo27} Let $\varphi\in L^{p'}(\partial_m\Omega,\nu)$. The  operator $B^m_{\a_p,\varphi}$ is  completely accretive and satisfies the range condition
\begin{equation}\label{rangecond01}L^{p'}(\Omega,\nu)\subset R(I+ B^m_{\a_p,\varphi}).
\end{equation}
Consequently,   $B^m_{\a_p,\varphi}$ is $m$-completely accretive in $L^{p'}(\Omega,\nu)$.

\end{theorem}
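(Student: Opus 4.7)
The plan has two parts, matching the two conclusions of the theorem: complete accretivity and the range inclusion $L^{p'}(\Omega,\nu)\subset R(I+B^m_{\a_p,\varphi})$.

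For the complete accretivity, I would argue directly from the characterization via $P_0$. Take any two pairs $(u_1,v_1),(u_2,v_2)\in B^m_{\a_p,\varphi}$ and any $q\in P_0$. Since $u_1,u_2\in L^p(\Omega_m,\nu)$ and $q$ is smooth with bounded derivative, the extended functions $q(u_1-u_2)$ lie in $L^p(\Omega_m,\nu)$ and are therefore admissible test functions in Proposition~\ref{prop23}. Apply exactly the computation of Remark~\ref{remmon} with the nondecreasing $T=q$, $f_i=v_i$, and $g_i=\varphi$ (the same boundary datum for both indices). The boundary contribution drops out because $g_1-g_2\equiv 0$ on $\partial_m\Omega$, and Remark~\ref{remmon} delivers $\int_\Omega (v_1-v_2)\,q(u_1-u_2)\,d\nu\geq 0$. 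By the proposition in Section~\ref{secacc}, this is the desired complete accretivity.

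For the range condition I would use a variational argument. Given $\phi\in L^{p'}(\Omega,\nu)$, define the primitive $\Psi(x,y,r):=\int_0^r\a_p(x,y,s)\,ds$, which is nonnegative, strictly convex in $r$, and satisfies $\Psi(x,y,r)\geq \tfrac{c}{p}|r|^p$ by \eqref{llo2}, and $\Psi(x,y,r)\leq C(|r|+|r|^p)$ by \eqref{llo1}. On the reflexive Banach space $X:=L^p(\Omega_m,\nu)\cap L^2(\Omega,\nu)$ consider the functional
\begin{equation*}
\mathcal{F}(u)=\tfrac12\int_{Q_1}\Psi(x,y,u(y)-u(x))\,d(\nu\otimes m_x)+\tfrac12\int_\Omega u^2\,d\nu-\int_\Omega \phi u\,d\nu-\int_{\partial_m\Omega}\varphi u\,d\nu.
\end{equation*}
The growth and convexity of $\Psi$, together with $\nu(\Omega_m)<+\infty$, give that $\mathcal{F}$ is strictly convex, proper and strongly continuous on $X$, hence weakly lower semicontinuous. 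The minimizer, once found, is unique by strict convexity.

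Coercivity is the step where the Poincar\'e inequality (Assumption~\ref{as3}) is crucial. From \eqref{llo2} and Cauchy--Schwarz,
\begin{equation*}
\mathcal{F}(u)\geq \tfrac{c}{2p}\int_{Q_1}|u(y)-u(x)|^p\,d(\nu\otimes m_x)+\tfrac12\|u\|_{L^2(\Omega,\nu)}^2-\bigl(\|\phi\|_{L^{p'}(\Omega,\nu)}+\|\varphi\|_{L^{p'}(\partial_m\Omega,\nu)}\bigr)\|u\|_{L^p(\Omega_m,\nu)}.
\end{equation*}
Assumption~\ref{as3} combined with $\bigl|\int_\Omega u\,d\nu\bigr|\leq \nu(\Omega)^{1/2}\|u\|_{L^2(\Omega,\nu)}$ lets me bound $\|u\|_{L^p(\Omega_m,\nu)}$ by the $p$th root of the Dirichlet energy plus the $L^2$ norm. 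Absorbing the linear terms by Young's inequality yields coercivity on $X$. The direct method then produces a unique minimizer $u^*\in X$.

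Finally, I compute the Euler--Lagrange equation by forming $\frac{d}{dt}\mathcal{F}(u^*+tw)\big|_{t=0}=0$ for arbitrary $w\in X$. The derivative of the double integral is $\tfrac12\int_{Q_1}\a_p(x,y,u^*(y)-u^*(x))(w(y)-w(x))\,d(\nu\otimes m_x)$, which Proposition~\ref{prop23} converts into $-\int_\Omega \mathrm{div}_m\a_p u^*\,w\,d\nu+\int_{\partial_m\Omega}\mathcal{N}^{\a_p}_1 u^*\,w\,d\nu$. Combining with the remaining three terms and varying $w$ separately over functions supported in $\Omega$ and in $\partial_m\Omega$ gives $u^*-\mathrm{div}_m\a_p u^*=\phi$ in $\Omega$ and $\mathcal{N}^{\a_p}_1 u^*=\varphi$ on $\partial_m\Omega$, so $(u^*|_\Omega,\phi-u^*|_\Omega)\in B^m_{\a_p,\varphi}$ and $\phi\in R(I+B^m_{\a_p,\varphi})$. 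Since the range of $I+B^m_{\a_p,\varphi}$ is automatically in $L^{p'}(\Omega,\nu)$ by Remark~\ref{porsi001}, the inclusion \eqref{rangecond01} becomes an equality; together with complete accretivity this is the definition of $m$-complete accretivity in $L^{p'}(\Omega,\nu)$.

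The main obstacle I anticipate is the coercivity step: one must use the Poincar\'e inequality with the correct mixture of the $L^2$ control coming from the zeroth order term and the nonlocal Dirichlet seminorm coming from $\Psi$, and verify that this works uniformly in $p$ (in particular when $p<2$, so that $L^p(\Omega,\nu)\not\subset L^2(\Omega,\nu)$ and one genuinely needs to work in the intersection space). A secondary technical issue is justifying that admissible test functions $w$ in the Euler--Lagrange derivation lie in the right class for Proposition~\ref{prop23} to apply; this is handled by noting that all quantities are already in the dual pairings arising from \eqref{llo1} and the finiteness of $\nu(\Omega_m)$.
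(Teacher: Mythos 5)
Your argument for complete accretivity is exactly the paper's: test with $q(u_1-u_2)$, $q\in P_0$, apply Proposition~\ref{prop23}, and use that the boundary datum $\varphi$ is common to both pairs so the boundary term cancels. For the range condition, however, you take a genuinely different route. The paper does not minimize an energy functional; it sets up a perturbed, truncated operator $A=(A_1,A_2)$ on $L^p(\Omega_m,\nu)$ with regularizing terms $\frac1n|u|^{p-2}u^+-\frac1k|u|^{p-2}u^-$ and data $\phi_{n,k},\varphi_{n,k}$, solves it by the Browder--Minty theorem (monotone, continuous, coercive), proves $L^\infty$ bounds and monotonicity of the approximants $u_{n,k}$ in $n$ and $k$, obtains uniform $L^p(\Omega_m,\nu)$ bounds via the Poincar\'e inequality, and passes to the limit by monotone and dominated convergence. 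Your direct method is legitimate here because in this scalar nonlocal setting $\a_p(x,y,\cdot)$, being continuous and strictly increasing with the growth bounds \eqref{llo1}--\eqref{llo2}, always admits the convex primitive $\Psi$, and the antisymmetry \eqref{llo4} makes $\Psi(x,y,r)=\Psi(y,x,-r)$, so the Euler--Lagrange computation via Proposition~\ref{prop23} does return the operator $\mathrm{div}_m\a_p$ and the boundary operator $\mathcal{N}^{\a_p}_1$; your coercivity estimate combining the Poincar\'e inequality with the $L^2$ zeroth-order term, and the use of the intersection space for $p<2$, are sound, and strict convexity (using $m_x(\Omega)>0$ for $x\in\partial_m\Omega$ to propagate uniqueness from $\Omega$ to the boundary trace) gives uniqueness of the minimizer. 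What your approach buys is brevity and a clean uniqueness statement. What the paper's approach buys is structural information that is reused later: the same truncation--perturbation--monotone-limit machinery is what carries the proof of Theorem~\ref{CAandRangeCond} for the Dipierro--Ros-Oton--Valdinoci boundary condition, where no Poincar\'e inequality is assumed and your coercivity argument would fail. Two small points to tidy: your final claim that the inclusion \eqref{rangecond01} ``becomes an equality'' is not quite justified for $1<p<2$ (Remark~\ref{porsi001} gives $v\in L^{p'}$ but $u\in L^p(\Omega_m,\nu)$ need not lie in $L^{p'}(\Omega,\nu)$); this is harmless since m-complete accretivity in $L^{p'}(\Omega,\nu)$ concerns the part of the operator in that space, which is how the paper reads it too. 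And you should state explicitly that Gateaux differentiability of the nonlocal term is justified by the dominated convergence theorem using \eqref{llo1} and $\nu(\Omega_m)<+\infty$.
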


\begin{proof}
To prove the complete accretivity of the operator $B^m_{\a_p,\varphi}$ we need to show that, if $(u_i, v_i) \in B^m_{p,\varphi}$, $i=1,2$, and  $q \in P_0$, then
$$
\int_\Omega (v_1(x) - v_2(x)) q(u_1(x) - u_2(x)) d\nu(x) \geq 0.
$$
In fact, by  the integration by parts formula given in  Proposition~\ref{prop23} and
having in mind that, for both $i=1$ and $2$,
$$\varphi(x) = -\int_{\Omega_m}  \a_p(x,y,u_i(y)-u_i(x)) dm_x(y), \quad x \in \partial_m\Omega, $$ we get (see also Remark~\ref{remmon})
$$\begin{array}{l}
\displaystyle
\int_\Omega (v_1(x) - v_2(x)) q(u_1(x) - u_2(x)) dx
 \\ \\
 \displaystyle\qquad =\frac12\int_{Q_1} \big( \a_p(x,y,u_1(y)-u_1(x))  - \a_p(x,y,u_2(y)-u_2(x)) \big)\times\qquad\qquad\\ \\
 \hfill \displaystyle \qquad\qquad\times\big(q(u_1(y) - u_2(y))-q(u_1(x) - u_2(x))\big) d(\nu\otimes m_x)(x,y)\ge 0\, .
  \end{array}
 $$

  Let us see that $B^m_{\a_p,\varphi}$ satisfies the range condition~\eqref{rangecond01}; that is, let us prove that for $\phi\in L^{p'}(\Omega,\nu)$ there exists $u\in D(B^m_{\a_p,\varphi})$ such  that $$u+B^m_{\a_p,\varphi}u\ni\phi.$$
  Take  the following $L^\infty$ approximations of $\phi$ and $\varphi$: $\phi_{n,k} := \sup\{
\inf\{\phi,n\},-k\}$ and $\varphi_{n,k} := \sup\{
\inf\{\varphi,n\},-k\}$, which are nondecreasing in $n$ and nonincreasing in $k$.
 Following the  idea used in~\cite{AIMTq} and~\cite{BLGneu},
 for $n, k\in \mathbb{N}$ and $K>0$,
let $$A:L^p(\Omega_m,\nu) \rightarrow  L^{p'}(\Omega_m,\nu)\equiv L^{p'}(\Omega,\nu)\times L^{p'}(\partial_m\Omega,\nu)$$ be defined by
$$A(u)= \big(A_1(u),A_2(u)\big),$$ where
$$A_1(u)(x)=T_K(u)(x)-\int_{\Omega_m}\a_p(x,y,u(y)-u(x))dm_x(y)+
\frac{1}{n}|u(x)|^{p-2}u^+(x)
-\frac{1}{k}|u(x)|^{p-2}u^-(x),$$ for $x\in\Omega$,
and
$$A_2(u)(x)= -\int_{\Omega_m}\a_p(x,y,u(y)-u(x))dm_x(y)
+\frac{1}{n}|u(x)|^{p-2}u^+(x)
-\frac{1}{k}|u(x)|^{p-2}u^-(x),$$
for $x\in \partial_m\Omega$. Here, $T_K$ is the truncation operator defined as $$T_K(r):= \left\{ \begin{array}{lll}r \quad &\hbox{if} \ \vert r \vert \leq K, \\[4pt]K \quad &\hbox{if} \ r >K, \\[4pt]-K \quad &\hbox{if} \ r< -K.  \end{array}\right.$$

  It is easy to see that $A$ is continuous and, moreover, it is monotone and coercive in $L^p(\Omega_m, \nu)$.
Indeed, for the monotonicity, follow the same steps used in the first part of this proof
and, for the coercivity, observe that
$$\int_{\Omega_m} A(u)u d\nu\ge\frac{1}{n}\Vert u^+\Vert_{L^p(\Omega_m,\nu)}^p+\frac{1}{k}\Vert u^-\Vert_{L^p(\Omega_m,\nu)}^p .$$

Therefore,  since $(\phi_{n,k},\varphi_{n,k})\in L^{p'}(\Omega)\times L^{p'}(\partial_m\Omega)$, by  \cite[Corollary 30]{brezisgrenoble},  there exist $u_{n,k}\in L^{p}(\Omega_m, \nu)$ such that
$$\big(A_1(u_{n,k}),A_2(u_{n,k})\big)=(\phi_{n,k},\varphi_{n,k}).$$
That is,
\begin{equation}\label{E1} \begin{array}{l}T_K(u_{n,k})(x)-\displaystyle\int_{\Omega_m}\a_p(x,y,u_{n,k}(y)-u_{n,k}(x))dm_x(y)+
\frac{1}{n}|u_{n,k}(x)|^{p-2}u_{n,k}^+(x)
\\  \qquad -\displaystyle \frac{1}{k}|u_{n,k}(x)|^{p-2}u_{n,k}^-(x)=\phi_{n,k}(x), \  \hbox{ for every $x\in\Omega,$}\end{array}\end{equation}
and
\begin{equation}\label{E2} \begin{array}{l} \displaystyle -\int_{\Omega_m}\a_p(x,y,u_{n,k}(y)-u_{n,k}(x))dm_x(y)
+\frac{1}{n}|u_{n,k}(x)|^{p-2}u_{n,k}^+(x)
\\  \displaystyle \ \qquad -\frac{1}{k}|u_{n,k}(x)|^{p-2}u_{n,k}^-(x)=\varphi_{n,k}(x), \ \hbox{ for every $x\in \partial_m\Omega.$}\end{array} \end{equation}

We will now see that $u_{n,k}\in L^\infty(\Omega,\nu)$, $n$, $k\in\N$. Let
$$M=M_{\phi,\varphi,n,k}:=\max\left\{\Vert \phi_{n,k}\Vert_{L^\infty(\Omega,\nu)},\left(n\Vert \varphi_{n,k}\Vert_{L^\infty(\partial_m\Omega,\nu)}\right)^\frac{1}{p-1},\left(k\Vert \varphi_{n,k}\Vert_{L^\infty(\partial_m\Omega,\nu)}\right)^\frac{1}{p-1}\right\}.$$
 Then, multiplying \eqref{E1} by $(u_{n,k}-M)^+$ and integrating over $\Omega$, since $$|u_{n,k}(x)|^{p-2}u_{n,k}^-(x)(u_{n,k}(x)-M)^+=0 \quad \hbox{ for every $x\in\Omega_m$},$$
 we get
$$\int_\Omega T_K(u_{n,k})(x) (u_{n,k}(x)-M)^+ d\nu(x) \qquad \qquad  \qquad\qquad \qquad \qquad \qquad  \qquad  \qquad \qquad \qquad $$
$$  -\displaystyle\int_\Omega\int_{\Omega_m}\a_p(x,y,u_{n,k}(y)-u_{n,k}(x))(u_{n,k}(x)-M)^+ dm_x(y) d\nu(x)  \qquad \qquad \qquad \qquad$$ $$+
\int_\Omega \frac{1}{n}|u_{n,k}(x)|^{p-2}u_{n,k}^+(x)(u_{n,k}(x)-M)^+ d\nu(x) = \int_\Omega \phi_{n,k}(x)(u_{n,k}(x)-M)^+ d \nu(x).$$
On the other hand, by Proposition~\ref{prop23} with $w=(u_{n,k}-M)^+$ and having in mind \eqref{E2},
we get that
$$-\int_{\Omega}\int_{\Omega_m}\a_p(x,y,u_{n,k}(y)-u_{n,k}(x))(u_{n,k}(x)-M)^+ dm_x(y)d\nu(x)$$
$$= \frac12 \iint_{Q_1}\a_p(x,y,u_{n,k}(y)-u_{n,k}(x))\left((u_{n,k}(y)-M)^+-(u_{n,k}(x)-M)^+\right) dm_x(y)d\nu(x)$$
$$+\int_{\partial_m\Omega}\left(\int_{\Omega_m}\a_p(x,y,u_{n,k}(y)-u_{n,k}(x))dm_x(y)\right)(u_{n,k}(x)-M)^+d\nu(x)$$
$$\ge \int_{\partial_m\Omega}\left(\int_{\Omega_m}\a_p(x,y,u_{n,k}(y)-u_{n,k}(x))dm_x(y)\right)(u_{n,k}(x)-M)^+d\nu(x)$$ $$= \int_{\partial_m\Omega}\left(-\varphi_{n,k}(x)+\frac{1}{n}|u_{n,k}(x)|^{p-2}u_{n,k}^+(x)
-\frac{1}{k}|u_{n,k}(x)|^{p-2}u_{n,k}^-(x)\right)(u_{n,k}(x)-M)^+d\nu(x) $$
$$= \int_{\partial_m\Omega}\left(-\varphi_{n,k}(x)+\frac{1}{n}|u_{n,k}(x)|^{p-2}u_{n,k}^+(x)
\right)(u_{n,k}(x)-M)^+d\nu(x) $$
$$ = -\int_{\partial_m\Omega}\varphi_{n,k}(x)(u_{n,k}(x)-M)^+d\nu(x) + \frac{1}{n}\int_{\partial_m\Omega}|u_{n,k}(x)|^{p-2}u_{n,k}^+(x)
(u_{n,k}(x)-M)^+d\nu(x) .$$
Hence,
$$ \int_\Omega T_K(u_{n,k})(x)(u_{n,k}(x)-M)^+d\nu(x)+\frac{1}{n}\int_{\partial_m\Omega}|u_{n,k}(x)|^{p-2}u_{n,k}^+(x)
(u_{n,k}(x)-M)^+d\nu(x)$$
$$\leq \int_\Omega \phi_{n,k}(x)(u_{n,k}(x)-M)^+ d \nu(x)+\int_{\partial_m\Omega}\varphi_{n,k}(x)(u_{n,k}(x)-M)^+d\nu(x).$$
 Therefore,
$$\begin{array}{c}
\displaystyle\int_{\Omega}\big(T_K(u_{n,k})-M\big)(u_{n,k}-M)^+ d\nu + \frac{1}{n}\int_{\partial_m\Omega}\big(|u_{n,k}|^{p-2}u_{n,k}^+-M^{p-1})(u_{n,k}-M)^+ d\nu
\\ \\ \displaystyle
\le
\int_{\Omega}(\phi_{n,k}-M) (u_{n,k}-M)^+ d\nu + \int_{\partial_m\Omega}(\varphi_{n,k}-\frac1n M^{p-1}) (u_{n,k}-M)^+ d\nu\le 0,
\end{array}$$
 and, consequently,   taking $K>M$, we get
$$u_{n,k}\le M\quad\nu-\hbox{a.e. in }\hbox{$\Omega$}.$$
Similarly,   taking $w=(u_{n,k}+M)^-$, we get
  $$\begin{array}{c}
\displaystyle\int_{\Omega}\big(T_K(u_{n,k})+M\big)(u_{n,k}+M)^-d\nu - \frac{1}{k}\int_{\partial_m\Omega}\big(|u_{n,k}|^{p-2}u_{n,k}^- -M^{p-1})(u_{n,k}+M)^-d\nu
\\ \\ \displaystyle
\ge
\int_{\Omega}(\phi_{n,k}+M) (u_{n,k}+M)^-d\nu + \int_{\partial_m\Omega}(\varphi_{n,k}+\frac1k M^{p-1}) (u_{n,k}+M)^-d\nu\ge 0,
\end{array}$$
which yields,   taking $K>M$,
$$u_{n,k}\ge -M\quad\nu-\hbox{a.e. in }\hbox{$\Omega$},$$
thus $$\Vert u_{n,k}\Vert_{ L^\infty(\Omega,\nu)}\le M $$
as desired.

Therefore, by \eqref{E1} and \eqref{E2}, we have
\begin{equation}\label{sab1050af}\begin{array}{c}\displaystyle u_{n,k}(x)-\int_{\Omega_m}\a_p(x,y, u_{n,k}(y)-u_{n,k}(x))dm_x(y)+ \qquad\qquad \qquad\qquad\\ \\
\displaystyle \qquad\qquad \qquad\qquad+
\frac{1}{n}|u_{n,k}(x)|^{p-2}u_{n,k}^+(x)
-\frac{1}{k}|u_{n,k}(x)|^{p-2}u_{n,k}^-(x)=\phi_{n,k}(x),
\end{array}\end{equation}
for $x\in\Omega$,
and
\begin{equation}\label{sab1050f}\begin{array}{c}\displaystyle
-\int_{\Omega_m}\a_p(x,y, u_{n,k}(y)-u_{n,k}(x))dm_x(y)+
\qquad\qquad \qquad\qquad\\ \\
\displaystyle \qquad\qquad \qquad\qquad
+\frac{1}{n}|u_{n,k}(x)|^{p-2}u_{n,k}^+(x)
-\frac{1}{k}|u_{n,k}(x)|^{p-2}u_{n,k}^-(x)=\varphi_{n,k}(x),
\end{array}\end{equation}
for $x\in \partial_m\Omega$.

Observe that, by Proposition~\ref{prop23} with $w=T_r(u_{n,k})$ ($r>0$), we  get $$\frac{1}{r}\int_\Omega  u_{n,k}T_r(u_{n,k})d\nu\le \int_\Omega|\phi_{n,k}| d\nu+ \int_{\partial_m\Omega}|\varphi_{n,k}| d\nu
\le \int_\Omega|\phi | d\nu+ \int_{\partial_m\Omega}|\varphi | d\nu
,$$
thus, letting $r\to 0$, we obtain
\begin{equation}\label{agost005}
\int_\Omega  |u_{n,k}|d\nu\le \int_\Omega|\phi| d\nu+\int_{\partial_m\Omega}|\varphi| d\nu.
\end{equation}

  Now, let us see that $\{u_{n,k}\} $ is  $\nu$-a.e. nondecreasing in $n$ and nonincreasing in $k$.  Indeed, for $n'<n$,   by  Proposition~\ref{prop23} with $w(x)=\left(u_{n',k}(x)-u_{n,k}(x)\right)^+$,  $x\in\Omega_m$,
we get
$$
 \begin{array}{l}
\displaystyle
0\le \int_{\Omega_m}\left(\frac{1}{n'}|u_{n',k}(x)|^{p-2}u_{n',k}^+(x)-\frac{1}{n}|u_{n,k}(x)|^{p-2}u_{n,k}^+(x)\right)\left(u_{n',k}(x)-u_{n,k}(x)\right)^+d\nu(x)\\ \\
\displaystyle \qquad +\frac{1}{k}\int_{\Omega_m}-\left(|u_{n',k}(x)|^{p-2}u_{n',k}^-(x)-|u_{n,k}(x)|^{p-2}u_{n,k}^-(x)\right)\left(u_{n',k}(x)-u_{n,k}(x)\right)^+d\nu(x)
\\ \\ \displaystyle
=\int_{\Omega}\left(\phi_{n',k}(x)-\phi_{n,k}(x)\right)\left(u_{n',k}(x)-u_{n,k}(x)\right)^+d\nu(x)
\\ \\ \displaystyle \qquad
+\int_{\Omega}\left(\varphi_{n',k}(x)-\varphi_{n,k}(x)\right)\left(u_{n',k}(x)-u_{n,k}(x)\right)^+d\nu(x)
\le 0 \,. \qquad
\end{array}
$$
Therefore,
\begin{equation}\label{agost006}
  \int_{\Omega_m}\left(\frac{1}{n'}|u_{n',k}(x)|^{p-2}u_{n',k}^+(x)-\frac{1}{n}|u_{n,k}(x)|^{p-2}u_{n,k}^+(x)\right)\left(u_{n',k}(x)-u_{n,k}(x)\right)^+d\nu(x)\\ =0, \end{equation}
  and
\begin{equation}\label{agost007}
\frac{1}{k}\int_{\Omega_m}-\left(|u_{n',k}(x)|^{p-2}u_{n',k}^-(x)-|u_{n,k}(x)|^{p-2}u_{n,k}^-(x)\right)\left(u_{n',k}(x)-u_{n,k}(x)\right)^+d\nu(x)
=0.\end{equation}
 Suppose that $\nu\left(\{x\in\Omega_m : \left(u_{n',k}(x)-u_{n,k}(x)\right)^+\}\right)>0$, i.e., $$\nu\left(\{x\in\Omega_m : u_{n',k}(x)>u_{n,k}(x)\}\right)>0,$$ then, if $\nu\left(\{x\in\Omega_m : u_{n,k}(x)>0\}\right)>0$ or $\nu\left(\{x\in\Omega_m : u_{n,k}(x)<0\le u_{n',k}(x)\}\right)>0$ we get a contradiction with~\eqref{agost006} and, if  $\nu\left(\{x\in\Omega_m : 0> u_{n',k}(x)>u_{n,k}(x)\}\right)>0$, we get a contradiction with~\eqref{agost007}. Consequently, $u_{n,k}$ is $\nu$-a.e. nondecreasing in $n$.

Similarly, we obtain that $u_{n,k}$ is $\nu$-a.e. nonincreasing in $k$.

  Now, multiplying \eqref{sab1050af} and \eqref{sab1050f} by $\displaystyle w= u_{n,k}-\frac{1}{\nu(\Omega)}\int_\Omega u_{n,k}d\nu$, by  Proposition \ref{prop23} (see also Remark ~\ref{remmon}), we get
$$ \begin{array}{l}
 \displaystyle
 \left\Vert u_{n,k}-\frac{1}{\nu(\Omega)}\int_\Omega u_{n,k}d\nu\right\Vert_{L^2(\Omega,\nu)}^2
 \\ \\
 \displaystyle
 \  +\frac{1}{2} \int_{Q_1} \a_p(x,y,u_{n,k}(y)-u_{n,k}(x)) (u_{n,k}(y) - u_{n,k}(x)) d(\nu\otimes m_x)(x,y)
 \\ \\
 \displaystyle = \int_\Omega \phi_{n,k} \left(u_{n,k}-\frac{1}{\nu(\Omega)}\int_\Omega u_{n,k}d\nu\right) d\nu  +\int_{\partial_m\Omega} \varphi_{n,k} \left(u_{n,k}-\frac{1}{\nu(\Omega)}\int_\Omega u_{n,k}d\nu\right)  d\nu \\ \\
 \displaystyle \   + \int_\Omega \left(\frac{1}{k}|u_{n,k}(x)|^{p-2}u_{n,k}^-(x) - \frac{1}{n}|u_{n,k}(x)|^{p-2}u_{n,k}^+(x)\right)\left(u_{n,k}(x)-\frac{1}{\nu(\Omega)}\int_\Omega u_{n,k}d\nu\right) d\nu(x) \\ \\
 \displaystyle \  + \int_{\partial_m \Omega} \left(\frac{1}{k}|u_{n,k}(x)|^{p-2}u_{n,k}^-(x) - \frac{1}{n}|u_{n,k}(x)|^{p-2}u_{n,k}^+(x)\right)\left(u_{n,k}(x)-\frac{1}{\nu(\Omega)}\int_\Omega u_{n,k}d\nu\right) d\nu(x)\\ \\
\end{array}
$$
For the third summand on the right hand side, since $  F_{n,k} (r):=\frac1k |r|^{p-2}r^--\frac1n|r|^{p-2}r^+$ is nonincreasing, we have that
$$\begin{array}{l}\displaystyle\int_\Omega \left(\frac{1}{k}|u_{n,k}(x)|^{p-2}u_{n,k}^-(x) - \frac{1}{n}|u_{n,k}(x)|^{p-2}u_{n,k}^+(x)\right)\left(u_{n,k}(x)-\frac{1}{\nu(\Omega)}\int_\Omega u_{n,k}d\nu\right) d\nu(x)\\ \\
\displaystyle = \int_\Omega F_{n,k}(u_{n,k}(x))\left(u_{n,k}(x)-\frac{1}{\nu(\Omega)}\int_\Omega u_{n,k}d\nu\right) d\nu(x)\\ \\
\displaystyle \le \int_\Omega F_{n,k}\left(\frac{1}{\nu(\Omega)}\int_\Omega u_{n,k}d\nu\right)\left(u_{n,k}(x)-\frac{1}{\nu(\Omega)}\int_\Omega u_{n,k}d\nu\right) d\nu(x)=0\\ \\
\end{array}$$
and for the fourth summand on the right hand side, using again the monotonicity of $F_{n,k}$ and then~\eqref{agost005}, we get
$$\begin{array}{l}\displaystyle\int_{\partial_m\Omega} \left(\frac{1}{k}|u_{n,k}(x)|^{p-2}u_{n,k}^-(x) - \frac{1}{n}|u_{n,k}(x)|^{p-2}u_{n,k}^+(x)\right)\left(u_{n,k}(x)-\frac{1}{\nu(\Omega)}\int_\Omega u_{n,k}d\nu\right) d\nu(x)\\ \\
\displaystyle = \int_{\partial_m\Omega} F_{n,k}(u_{n,k}(x))\left(u_{n,k}(x)-\frac{1}{\nu(\Omega)}\int_\Omega u_{n,k}d\nu\right) d\nu(x)
\\ \\
\displaystyle \le \int_{\partial_m\Omega} F_{n,k}\left(\frac{1}{\nu(\Omega)}\int_\Omega u_{n,k}d\nu\right)\left(u_{n,k}(x)-\frac{1}{\nu(\Omega)}\int_\Omega u_{n,k}d\nu\right) d\nu(x)
\\ \\
\displaystyle \le \int_{\partial_m\Omega} \left(\frac1k+\frac1n\right)\left|\frac{1}{\nu(\Omega)}\int_\Omega u_{n,k}d\nu\right|^{p-1}\left|u_{n,k}(x)-\frac{1}{\nu(\Omega)}\int_\Omega u_{n,k}d\nu\right| d\nu(x)
\end{array}$$
$$\begin{array}{l}
\displaystyle \le \left(\frac1k+\frac1n\right)\left|\frac{1}{\nu(\Omega)}\int_\Omega u_{n,k}d\nu\right|^{p-1}\nu(\partial_m\Omega)^\frac{1}{p'} \left\Vert u_{n,k}(x)-\frac{1}{\nu(\Omega)}\int_\Omega u_{n,k}d\nu\right\Vert_{L^p(\Omega_m,\nu)}
\\
\displaystyle \le 2\frac{\nu(\partial_m\Omega)^\frac{1}{p'}}{\nu(\Omega)^{p-1}}\left( \Vert\phi\Vert_{L^{1}(\Omega,\nu)}  +\Vert\varphi\Vert_{L^{1}(\partial_m\Omega,\nu)} \right)^{p-1}   \left\Vert u_{n,k}(x)-\frac{1}{\nu(\Omega)}\int_\Omega u_{n,k}d\nu\right\Vert_{L^p(\Omega_m,\nu)}.
\end{array}$$
Let $\alpha:=2\frac{\nu(\partial_m\Omega)^\frac{1}{p'}}{\nu(\Omega)^{p-1}}\left( \Vert\phi\Vert_{L^{1}(\Omega,\nu)}  +\Vert\varphi\Vert_{L^{1}(\partial_m\Omega,\nu)} \right)^{p-1} $. Consequently, by~\eqref{llo2}, we have that
$$ \begin{array}{l}
 \displaystyle
\left\Vert u_{n,k}-\frac{1}{\nu(\Omega)}\int_\Omega u_{n,k}d\nu\right\Vert_{L^2(\Omega,\nu)}^2+\frac{c}{2} \int_{Q_1} |u_{n,k}(y) - u_{n,k}(x)|^p d(\nu\otimes m_x)(x,y)
 \\ \\\displaystyle \le    \left(\Vert\phi\Vert_{L^{p'}(\Omega,\nu)}  +\Vert\varphi\Vert_{L^{p'}(\partial_m\Omega,\nu)} + \alpha \right) \left\Vert u_{n,k}-\frac{1}{\nu(\Omega)}\int_\Omega u_{n,k}d\nu\right\Vert_{L^p(\Omega_m,\nu)}.
\end{array}
$$
Now, by Poincar\'{e}'s inequality~\eqref{agost004},
$$ \begin{array}{l}
 \displaystyle
\left\Vert u_{n,k}-\frac{1}{\nu(\Omega)}\int_\Omega u_{n,k}d\nu\right\Vert_{L^2(\Omega,\nu)}^2+\frac{c}{2\lambda^p}\left\Vert u_{n,k}-\frac{1}{\nu(\Omega)}\int_\Omega u_{n,k}d\nu\right\Vert_{L^p(\Omega_m,\nu)}^p
 \\ \\\displaystyle \le    \left(\Vert\phi\Vert_{L^{p'}(\Omega,\nu)}  +\Vert\varphi\Vert_{L^{p'}(\partial_m\Omega,\nu)}+ \alpha\right) \left\Vert u_{n,k}-\frac{1}{\nu(\Omega)}\int_\Omega u_{n,k}d\nu\right\Vert_{L^p(\Omega_m,\nu)}
 .
\end{array}
$$
 Hence, by~\eqref{agost005}, we have that $\{u_{n,k}\}$ is bounded in $L^p(\Omega_m,\nu)$  (and in $L^2(\Omega,\nu)$).
Then, by the monotone and dominated convergence theorems,  we can pass to the limit in $n$, and then in $k$, in~\eqref{sab1050af} and~\eqref{sab1050f}, to obtain $u\in L^p(\Omega_m,\nu)$ such that
$$\  u(x)-\int_{\Omega_m}\a_p(x,y,u(y)-u(x))dm_x(y) =\phi(x),\quad x\in\Omega,
 $$
 and
$$-\int_{\Omega_m}\a_p(x,y,u(y)-u(x))dm_x(y)
=\varphi(x),\quad x\in \partial_m\Omega.$$
 Indeed, for $k\in\N$, since $\left(u_{n,k}\right)_n$ is bounded in $L^p(\Omega_m,\nu)$ we may find a subsequence (which we continue to denote by $\left(u_{n,k}\right)_n$) which converges weakly in $L^p(\Omega_m,\nu)$ to some $u^{\ast}_k\in L^p(\Omega_m,\nu)$. Now, since $\Vert u^\ast_k\Vert_p\leq \limsup_n\Vert u_{n,k}\Vert_p$ for every $k\in\N$, we may again find a subsequence of $(u^\ast_k)$ (which we denote equally) weakly convergent in $L^p(\Omega_m,\nu)$ to some $u\in L^p(\Omega_m,\nu)$. Note that, since $u_{n,k}$ is monotone in $n$ for every fixed $k$, we also have that $u_{n,k}\to_n u^\ast_k$ pointwise $\nu$-a.e. (the limits coincide by \cite[Theorem 1.35.]{Ambrosio}). Then, since $u_{n,k}$ is monotone in $k$ for every fixed $n$ we get that $(u^\ast_k)$ is monotone thus $u^\ast_k\to_k u$ pointwise $\nu$-a.e.
Moreover, $(|u_{n,k}|^{p-1}u_{n,k})$ is bounded in $L^1(\Omega_m,\nu)$ and is also monotone with respect to $n$ and $k$ in the same way as $(u_{n,k})$. Consequently, for a fixed $k\in\N$, by the monotone convergence theorem we have that $\vert u_{n,k}\vert^{p-1}u_{n,k}\to_n \vert u^\ast_k\vert^{p-1}u^\ast_k$ in $L^1(\Omega_m,\nu)$ and $\vert u^\ast_{k}\vert^{p-1}u^\ast_{k}\to_k \vert u\vert^{p-1}u$ in $L^1(\Omega_m,\nu)$. In particular,
 $$\int_{\Omega_m} ||u_{n,k}|^{p-1}u_{n,k}|d\nu\stackrel{n}{\longrightarrow} \int_{\Omega_m}|u^\ast_k|^pd\nu \ \ \hbox{ and } \ \ \int_{\Omega_m} ||u^\ast_{k}|^{p-1}u_{k}|d\nu\stackrel{k}{\longrightarrow} \int_{\Omega_m}|u|^pd\nu .$$
It follows, by the weak convergence $u_{n,k}\rightharpoonup_{n} u_k^{\ast}$ in $L^p(\Omega_m,\nu)$ together with the convergence of norms $\Vert u_{n,k} \Vert_{L^p(\Omega_m,\nu)}\to_n \Vert u_k^{\ast} \Vert_{L^p(\Omega_m,\nu)} $, that, for each $k\in\N$, $u_{n,k}\to_n u^\ast_k$ in $L^p(\Omega_m,\nu)$ and, similarly, that $u^\ast_k\to_k u$ in $L^p(\Omega_m,\nu)$.  Moreover, there exist $h_k\in L^p(\Omega_m,\nu)$, $k\in\N$, and $h\in L^p(\Omega_m,\nu)$ such that $|u_{n,k}|\leq h_k$ for every $n$, $k\in \N$ and $|u_k^\ast|\le h$ for every $k\in\N$.
 Finally, let us see that we can pass to the limit in \eqref{sab1050af} and \eqref{sab1050f}. Let $A\subset\Omega_m$ be a $\nu$-null set such that $|h_k(x)|<+\infty$, $|h(x)|<+\infty$, $u_{n,k}(x)\to_{n} u^\ast_k(x)<+\infty$, $u^\ast_{k}(x)\to_{k} u(x)<+\infty$ and $|u_{n,k}(x)|<+\infty$ for every $x\in \Omega_m\setminus A$ and $n$, $k\in\N$. Note that, since $m_x<<\nu$ for every $x\in X$, we also have that $A$ is $m_x$-null for every $x\in X$. Then, by \eqref{ll002}, there exists a $\nu\otimes m_x$-null set $\Psi \subset Q_1$ such that $\a_p(x,y,\cdot)$ is continuous for every $(x,y)\in Q_1\setminus\Psi$. Let $B\subset \Omega_m$ such that the section $\Psi_x$ of $\Psi$ is $m_x$-null for every $x\in X\setminus B$. Then,  $$\a_p(x,y,u_{n,k}(x)-u_{n,k}(y))\stackrel{n}{\longrightarrow}\a_p(x,y,u^\ast_k(x)-u^\ast_k(y))$$
  and
$$\a_p(x,y,u^\ast_{k}(x)-u^\ast_{k}(y))\stackrel{k}{\longrightarrow}\a_p(x,y,u(x)-u(y))$$
  pointwise for every $(x,y)\in\Psi\setminus (A\times A)$. Now,
 $$|\a_p(x,y,u_{n,k}(x)-u_{n,k}(y))|\le C(1+|u_{n,k}(x)-u_{n,k}(y)|^{p-1})\le \tilde{C}(1+|u_{n,k}(x)|^{p-1}+|u_{n,k}(y)|^{p-1}) $$
 $$\le \tilde{C}(1+|h_{k}(x)|^{p-1}+|h_{k}(y)|^{p-1})=:g_k(x,y)\in L^{\frac{p}{p-1}}(\Omega_m,\nu(dy))$$
 for every $x\in \Omega_m\setminus A$. Moreover, $g_k(x,y)\in L^{\frac{p}{p-1}}(\Omega_m,m_x(dy))\subset L^{1}(\Omega_m,m_x(dy))$ for $\nu$-a.e. $x\in \Omega_m$. So we may apply the dominated convergence theorem to get
 $$\int_{\Omega_m}\a_p(x,y, u_{n,k}(y)-u_{n,k}(x))dm_x(y)\to_{n} \int_{\Omega_m}\a_p(x,y, u^\ast_{k}(y)-u^\ast_{k}(x))dm_x(y)$$
 for $\nu$-a.e. $x\in\Omega_m$. Similarly, we can take limits in $k$ so that
  $$\int_{\Omega_m}\a_p(x,y, u^\ast_{k}(y)-u^\ast_{k}(x))dm_x(y)\to_{k} \int_{\Omega_m}\a_p(x,y, u(y)-u(x))dm_x(y)$$
 for $\nu$-a.e. $x\in\Omega_m$.

Therefore, the range condition in~\eqref{rangecond01} holds.
\end{proof}

\begin{theorem}\label{remdom01} Let $\varphi\in L^{p'}(\partial_m\Omega,\nu)$. Then,
 $$\overline{D(B^m_{\a_p,\varphi})}^{L^{p'}(\Omega,\nu)}=L^{p'}(\Omega,\nu).$$
\end{theorem}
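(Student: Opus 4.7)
The plan is to show that the resolvents $J_\lambda := (I + \lambda B^m_{\a_p,\varphi})^{-1}$, well defined on all of $L^{p'}(\Omega,\nu)$ for every $\lambda > 0$ by the $m$-complete accretivity established in Theorem~\ref{teo27}, satisfy $J_\lambda \phi \to \phi$ in $L^{p'}(\Omega,\nu)$ as $\lambda \to 0^+$ for every $\phi \in L^{p'}(\Omega,\nu)$. Since $J_\lambda \phi \in D(B^m_{\a_p,\varphi})$ for every $\lambda>0$, this yields the desired density. First, because $J_\lambda$ is a contraction on $L^{p'}(\Omega,\nu)$ (by complete accretivity), the set of $\phi$ for which the convergence holds is closed in $L^{p'}(\Omega,\nu)$; since $\nu(\Omega) < +\infty$, $L^\infty(\Omega,\nu)$ is dense in $L^{p'}(\Omega,\nu)$, and it suffices to prove the convergence for $\phi \in L^\infty(\Omega,\nu)$.

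Fix such a $\phi$ and let $u_\lambda = J_\lambda \phi$, with associated extension $\overline u_\lambda \in L^p(\Omega_m,\nu)$; so $u_\lambda - \lambda\,\hbox{div}_m \a_p \overline u_\lambda = \phi$ on $\Omega$ and $\mathcal{N}^{\a_p}_1 \overline u_\lambda = \varphi$ on $\partial_m\Omega$. Let $\tilde\phi \in L^\infty(\Omega_m,\nu)$ denote the extension of $\phi$ to $\Omega_m$ by $0$ on $\partial_m\Omega$. Testing the resolvent equation against $w := \overline u_\lambda - \tilde\phi$ via Proposition~\ref{prop23} and using the PDE on $\Omega$ will yield the identity
$$\int_\Omega (u_\lambda - \phi)^2\, d\nu + \frac{\lambda}{2}\int_{Q_1} \a_p(x,y,\nabla \overline u_\lambda)\nabla \overline u_\lambda\, d(\nu\otimes m_x) = \lambda \int_{\partial_m\Omega}\varphi\,w\,d\nu + \frac{\lambda}{2}\int_{Q_1}\a_p(x,y,\nabla \overline u_\lambda)\nabla\tilde\phi\,d(\nu\otimes m_x).$$
Using \eqref{llo1} and \eqref{llo2}, the uniform bound on $c_\lambda := \frac{1}{\nu(\Omega)}\int_\Omega u_\lambda\,d\nu$ provided by Remark~\ref{porsi001} (through $\int_\Omega u_\lambda\,d\nu = \int_\Omega \phi\,d\nu + \lambda\int_{\partial_m\Omega}\varphi\,d\nu$), the Poincar\'e inequality \eqref{agost004} applied to $\overline u_\lambda - c_\lambda$, and Young's inequality to absorb the resulting cross terms into the coercive gradient energy, one arrives, for $\lambda\in(0,1]$, at
$$\int_\Omega (u_\lambda - \phi)^2\,d\nu + C_1 \lambda\, \Vert \nabla \overline u_\lambda\Vert_{L^p(Q_1,\nu\otimes m_x)}^p \leq C_2 \lambda$$
with positive constants $C_1,C_2$ depending on $\Vert\phi\Vert_{L^\infty(\Omega)}$, $\Vert\varphi\Vert_{L^{p'}(\partial_m\Omega)}$, the Poincar\'e constant, $\nu(\Omega_m)$, and the structural constants of $\a_p$.

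In particular, $\Vert\nabla \overline u_\lambda\Vert_{L^p(Q_1,\nu\otimes m_x)}$ stays uniformly bounded as $\lambda\to 0^+$. Combining this with the resolvent equation $u_\lambda - \phi = \lambda\,\hbox{div}_m\a_p\overline u_\lambda$ on $\Omega$, the growth condition \eqref{llo1}, and H\"older's inequality in $L^p(\Omega_m,m_x)$, one concludes
$$\Vert u_\lambda - \phi\Vert_{L^{p'}(\Omega,\nu)} \leq C\lambda\bigl(1 + \Vert\nabla\overline u_\lambda\Vert_{L^p(Q_1,\nu\otimes m_x)}^{p-1}\bigr) \leq C'\lambda \to 0 \quad \hbox{as } \lambda\to 0^+,$$
which gives $\phi\in\overline{D(B^m_{\a_p,\varphi})}^{L^{p'}(\Omega,\nu)}$ and, by the reduction above, completes the proof.

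The main obstacle is the energy estimate itself: since the extension $\overline u_\lambda$ on $\partial_m\Omega$ is forced by the nonhomogeneous Neumann condition, it need not remain uniformly bounded there as $\lambda\to 0^+$, and only its $\Omega$-mean $c_\lambda$ is under a priori control. The Poincar\'e inequality is used to control $\Vert\overline u_\lambda - c_\lambda\Vert_{L^p(\Omega_m,\nu)}$ in terms of $\Vert\nabla\overline u_\lambda\Vert_p$, and a careful cascade of Young's inequalities (in the form $ab\leq\delta a^p+C_\delta b^{p'}$ with the powers of $\lambda$ split so that both $\lambda\Vert\nabla\overline u_\lambda\Vert_p^{p-1}$ and $\lambda\Vert\nabla\overline u_\lambda\Vert_p$ are absorbed into $\lambda\Vert\nabla\overline u_\lambda\Vert_p^p$) is required to obtain the right hand side of order $\lambda$.
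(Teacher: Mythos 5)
Your argument is correct and follows the same overall strategy as the paper: both proofs reduce, via the contraction property of the resolvents and the density of $L^\infty(\Omega,\nu)$ in $L^{p'}(\Omega,\nu)$, to showing that $(I+\lambda B^m_{\a_p,\varphi})^{-1}\phi\to\phi$ for bounded $\phi$, and both extract the needed information from an energy identity obtained through Proposition~\ref{prop23}, the coercivity \eqref{llo2}, the mean control of Remark~\ref{porsi001} and the Poincar\'e inequality \eqref{agost004}. The difference lies in the test function. The paper tests the resolvent equation against $u_n-\frac{1}{\nu(\Omega)}\int_\Omega u_n\,d\nu$; this leaves the constant-order term $\frac12\Vert z\Vert_{L^2(\Omega,\nu)}^2$ on the right-hand side, so it only yields that $\frac1n\Vert u_n-\frac{1}{\nu(\Omega)}\int_\Omega u_n d\nu\Vert_{L^p(\Omega_m,\nu)}^p$ is bounded, which combined with $p'>1$ still forces $\frac{1}{n^{p'}}\int_{Q_1}|\a_p(x,y,u_n(y)-u_n(x))|^{p'}\to 0$. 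You instead test against $\overline u_\lambda-\tilde\phi$ with $\tilde\phi$ the extension of $\phi$ by zero; since $w=u_\lambda-\phi$ on $\Omega$, every term on the right then carries a factor of $\lambda$, and after absorbing the cross terms (which works as you describe: $\Vert\nabla\tilde\phi\Vert_{L^p(Q_1,\nu\otimes m_x)}<+\infty$ because $\phi$ is bounded and $\nu(\Omega_m)<+\infty$, and $\Vert u_\lambda\Vert_{L^p(\partial_m\Omega,\nu)}$ is controlled by Poincar\'e applied to $\overline u_\lambda-c_\lambda$ plus the bound on $c_\lambda$) you obtain the genuinely stronger conclusion that $\Vert\nabla\overline u_\lambda\Vert_{L^p(Q_1,\nu\otimes m_x)}$ is uniformly bounded, whence the explicit rate $\Vert u_\lambda-\phi\Vert_{L^{p'}(\Omega,\nu)}=O(\lambda)$. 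So your choice of test function buys a quantitative convergence rate and a uniform energy bound that the paper's argument does not provide; the paper's choice is marginally more economical in that it avoids introducing the extension $\tilde\phi$. Both are complete proofs of the density statement.
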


\begin{proof}
   Let us see that, given  $z\in L^{\infty}(\Omega,\nu)$, $$u_n:=\left(I+\frac{1}{n}B^m_{\a_p,\varphi}\right)^{-1}z\to z \quad \hbox{in} \ L^{p'}(\Omega,\nu).$$
  Since $(u_n, n(z - u_n)) \in B^m_{\a_p,\varphi}$, we have
   \begin{equation}\label{eqq1}u_n(x)-z(x)=\frac1n\int_{\Omega_m}\a_p(x,y,u_n(y)-u_n(x))dm_x(y),\quad x\in\Omega.
 \end{equation}
 Hence,
 $$
 \begin{array}{c}
 \displaystyle \int_\Omega|u_n(x)-z(x)|^{p'}d\nu(x)
 =\frac{1}{n^{p'}}\int_\Omega\left|
 \int_{\Omega_m}\a_p(x,y,u_n(y)-u_n(x))dm_x(y)\right|^{p'}d\nu(x)
 \\ \\
 \displaystyle
 \le\frac{1}{n^{p'}}\int_\Omega
 \int_{\Omega_m}|\a_p(x,y,u_n(y)-u_n(x))|^{p'}dm_x(y) d\nu(x)
 \\ \\
 \displaystyle
 \le\frac{1}{n^{p'}}\int_{Q_1}|\a_p(x,y,u_n(y)-u_n(x)|^{p'}d(\nu\otimes m_x)(x,y).
 \end{array}
 $$
 Therefore, we only need to prove that
  $$\frac{1}{n^{p'}}\int_{Q_1}|\a_p(x,y,u_n(y)-u_n(x)|^{p'}d(\nu\otimes m_x)(x,y)\ \hbox{ converges to $0$ as $n\to+\infty$},
$$
for which, on account of~\eqref{llo1}, it is enough to see
that
\begin{equation}\label{unif001bis}\frac{1}{n^{p'}}\int_{\Omega_m}|u_n(x)|^{p}d\nu(x)\ \hbox{ converges to $0$ as $n\to+\infty$}.
  \end{equation}

Now, by Remark \eqref{porsi001}, we have that
$$\int_\Omega u_nd\nu=\int_\Omega zd\nu+\frac1n\int_{\partial_m\Omega}\varphi d\nu,
$$
  thus, to prove \eqref{unif001bis}, we only need to see that
  \begin{equation}\label{agost003niu001} \begin{array}{c}
 \displaystyle
  \frac1n\left\Vert u_{n}-\frac{1}{\nu(\Omega)}\int_\Omega u_{n}d\nu\right\Vert_{L^p(\Omega_m,\nu)}^p \quad\hbox{is uniformly bounded.}
\end{array}
 \end{equation}
  Multiplying \eqref{eqq1} by $ u_{n}-\frac{1}{\nu(\Omega)}\int_\Omega u_{n}d\nu$, integrating over $\Omega$ with respect to $\nu$ and applying integration by parts (Remark \ref{remmon} with $f=n(z-u_n)$) we obtain
  $$\left\Vert u_{n}-\frac{1}{\nu(\Omega)}\int_\Omega u_{n}\right\Vert_{L^2(\Omega,\nu)}^2 + \frac{1}{2n} \int_{Q_1} \a_p(x,y,u_n(y)-u_n(x))(u_n(y) - u_n(x))d(\nu\otimes m_x)(x,y) $$ $$= \frac{1}{n} \int_{\partial_m \Omega} \varphi \left(u_{n}-\frac{1}{\nu(\Omega)}\int_\Omega u_{n}d\nu \right) d\nu + \int_\Omega \left(z-\frac{1}{\nu(\Omega)}\int_\Omega u_{n}d\nu \right)\left(u_{n}-\frac{1}{\nu(\Omega)}\int_\Omega u_{n}d\nu \right) d\nu $$
   $$ = \frac{1}{n} \int_{\partial_m \Omega} \varphi \left(u_{n}-\frac{1}{\nu(\Omega)}\int_\Omega u_{n}d\nu \right) d\nu + \int_\Omega z\left(u_{n}-\frac{1}{\nu(\Omega)}\int_\Omega u_{n}d\nu \right) d\nu $$
   $$ \leq \frac1n\Vert\varphi\Vert_{L^{p'}(\partial_m\Omega,\nu)}\left\Vert u_{n}-\frac{1}{\nu(\Omega)}\int_\Omega u_{n}d\nu\right\Vert_{L^p(\Omega_m,\nu)}+ \frac12\Vert z\Vert_{L^{2}(\Omega,\nu)}^2 +\frac12\left\Vert u_{n}-\frac{1}{\nu(\Omega)}\int_\Omega u_{n}\right\Vert_{L^2(\Omega,\nu)}^2.$$
  On the other hand, by \eqref{llo2} and Poincar\'{e}'s inequality~\eqref{agost004}, we have
  \begin{gather*}
  \frac{1}{2n} \int_{Q_1} \a_p(x,y,u_n(y)-u_n(x))(u_n(y) - u_n(x))d(\nu\otimes m_x)(x,y)\\
  \geq \frac{c}{2\lambda^p n } \left\Vert u_{n}-\frac{1}{\nu(\Omega)}\int_\Omega u_{n}d\nu\right\Vert_{L^p(\Omega_m,\nu)}^p .
  \end{gather*}
  Therefore,
$$ \begin{array}{c}
 \displaystyle
  \frac{c}{2\lambda^p}\frac1n \left\Vert u_{n}-\frac{1}{\nu(\Omega)}\int_\Omega u_{n}d\nu\right\Vert_{L^p(\Omega_m,\nu)}^p
  \\ \\ \displaystyle \le      \frac12\Vert z\Vert_{L^{2}(\Omega,\nu)}^2
 + \frac1n\Vert\varphi\Vert_{L^{p'}(\partial_m\Omega,\nu)}\left\Vert u_{n}-\frac{1}{\nu(\Omega)}\int_\Omega u_{n}d\nu\right\Vert_{L^p(\Omega_m,\nu)},
\end{array}
$$
from where \eqref{agost003niu001} follows.
\end{proof}

\

The following theorem is a consequence of the previous results thanks to Theorem \ref{teointronls}.

\begin{theorem}\label{nsth01} Let $\varphi\in L^{p'}(\partial_m\Omega,\nu)$ and  $T>0$. For any
$u_0\in \overline{D(B^m_{\a_p,\varphi})}^{L^{p'}(\Omega,\nu)}=L^{p'}(\Omega,\nu)$  there exists a unique mild-solution $ u(t,x)$ of Problem~\eqref{Neumann4}.
 Moreover, for any $ q\geq p'$ and  $u_{0i}\in L^q(\Omega,\nu)$,  $i=1,2$, we have the following contraction principle for the corresponding mild-solutions $u_i$:
$$  \Vert (u_1(t,.)-u_2(t,.))^+\Vert_{L^q(\Omega,\nu)}\le \Vert (u_{0,1}-u_{0,2})^+\Vert_{L^q(\Omega,\nu)}
 \quad \hbox{for any \  $0\le t< T$.}$$

 If $u_0\in D(B^m_{\a_p,\varphi})$ then the mild-solution is a strong solution.
\end{theorem}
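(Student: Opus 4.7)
The plan is to recognize that Theorem~\ref{nsth01} is a direct application of the abstract Crandall--Liggett / Bénilan--Crandall machinery (Theorem~\ref{teointronls}) once we have verified that the operator $B^m_{\a_p,\varphi}$ associated with Problem~\eqref{Neumann4} fits into the framework of m-completely accretive operators on a normal Banach space with dense domain. All the real analytic work is already contained in Theorems~\ref{teo27} and \ref{remdom01}, so the ``proof'' is essentially bookkeeping.

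First, I would fix the ambient Banach space as $E := L^{p'}(\Omega,\nu)$, which is normal in the sense of Subsection~\ref{secacc} (since $L^q$ spaces are normal for $1\leq q\leq\infty$). By Theorem~\ref{teo27}, the operator $B^m_{\a_p,\varphi}\subset E\times E$ is completely accretive and satisfies the range condition $L^{p'}(\Omega,\nu)\subset R(I+B^m_{\a_p,\varphi})$, hence $R(I+\lambda B^m_{\a_p,\varphi}) = E$ for some (all) $\lambda>0$, i.e.\ $B^m_{\a_p,\varphi}$ is m-completely accretive in $E$. By Theorem~\ref{remdom01}, $\overline{D(B^m_{\a_p,\varphi})}^{E} = E$. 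Theorem~\ref{teointronls} applied in $E$ therefore gives, for every initial datum $u_0\in E$, a unique mild solution of the abstract Cauchy problem
\begin{equation*}
\frac{du}{dt}+B^m_{\a_p,\varphi}u\ni 0,\qquad u(0)=u_0,
\end{equation*}
which is exactly the mild-solution formulation of Problem~\eqref{Neumann4}; and if $u_0\in D(B^m_{\a_p,\varphi})$ this mild solution is strong.

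For the $L^q$-contraction with $q\geq p'$, I would argue as follows. Since $\nu(\Omega)<\infty$ (by Assumption~\ref{as2}), the embedding $L^q(\Omega,\nu)\hookrightarrow L^{p'}(\Omega,\nu)$ is continuous for $q\geq p'$, so any $u_{0,i}\in L^q(\Omega,\nu)$ lies in $E$ and the mild solutions $u_i(t,\cdot)$ are well defined. The contraction and maximum principle in $L^q(\Omega,\nu)$ stated in Theorem~\ref{teointronls} then delivers precisely
\begin{equation*}
\|(u_1(t,\cdot)-u_2(t,\cdot))^+\|_{L^q(\Omega,\nu)}\leq \|(u_{0,1}-u_{0,2})^+\|_{L^q(\Omega,\nu)},\qquad 0\leq t<T.
\end{equation*}

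In principle there is no genuine obstacle here, because the heavy lifting (complete accretivity, the range condition, and density of the domain) has already been carried out in the preceding theorems. The only points requiring a moment of care are: (i) checking that the mild solution of the abstract equation coincides with a mild solution of Problem~\eqref{Neumann4} in the sense the authors want, which is automatic from the definition of $B^m_{\a_p,\varphi}$ since $(u,v)\in B^m_{\a_p,\varphi}$ encodes both the equation $-\hbox{div}_m\a_p u=v$ on $\Omega$ and the Neumann condition $\mathcal{N}^{\a_p}_1 u=\varphi$ on $\partial_m\Omega$; and (ii) noting that the $L^q$-order-preserving contraction for $q\geq p'$ is inherited from complete accretivity via the characterization in terms of test functions $q\in P_0$, together with the finiteness $\nu(\Omega)<\infty$ needed to place $L^q$-data inside the ambient space $L^{p'}(\Omega,\nu)$.
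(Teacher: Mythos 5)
Your proposal is correct and follows exactly the route the paper takes: the paper's own proof consists of the single remark that the theorem follows from Theorem~\ref{teointronls} combined with Theorem~\ref{teo27} ($m$-complete accretivity and the range condition in the normal Banach space $L^{p'}(\Omega,\nu)$) and Theorem~\ref{remdom01} (density of the domain). Your additional observations, that $\nu(\Omega)<\infty$ places $L^q$-data inside $L^{p'}(\Omega,\nu)$ for $q\ge p'$ and that membership in $B^m_{\a_p,\varphi}$ encodes both the equation and the Neumann condition, are correct and simply make explicit what the paper leaves implicit.
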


It is natural to ask whether $u\in L^\infty(\Omega_m,\nu)$ whenever $u$ is the solution of the problem
$$
\left\{ \begin{array}{ll} u(x)-\hbox{div}_m\a_p u(x) = v(x), \quad & x \in\Omega, \\ [8pt] \mathcal{N}^{\a_p}_1 u(x) = \varphi(x), \quad & x\in\partial_m\Omega, \end{array} \right.
$$
with $v\in L^\infty(\Omega,\nu)$ and  $\varphi\in  L^\infty(\partial_m\Omega,\nu)$. In the next example we will see that this is not true in general and, as a consequence, that there exist metric random walk spaces that do not satisfy a Poincar\'{e} type inequality like~\eqref{agost004}.

\begin{example}{\rm
Let $V(G):=\{x_0, x_1,\ldots, x_n, \ldots\}$, $w_{x_0, x_n}=w_{x_n, x_0}=\frac{1}{7^n}$ for $n \in \N$, $w_{x_n, x_n}=\frac{1}{3^n}-\frac{1}{7^n}$ for $n \in \N $ and $w_{x,y}=0$ otherwise. Consider the metric random walk space $[V(G),d_G,m^G]$ associated to this infinite weighted discrete graph. Note that this graph is not locally finite. Then,
$$d_{x_0} = \frac{1}{6}, \quad d_{x_n} = \frac{1}{3^n}, \quad  m_{x_0}=\sum_{n\geq 1} \frac{6}{7^n}\delta_{x_n} ,$$
$$m_{x_n}=\left(\frac{3}{7}\right)^n\delta_{x_0}+\left(1-\left(\frac{3}{7}\right)^n\right)\delta_{x_n} \ , \quad n\geq 1,$$
and
$$\nu=\frac16\delta_{x_0}+\sum_{n\geq 1} \frac{1}{3^n}\delta_{x_n}, \quad \nu(V) = \frac16 + \sum_{n=1}^\infty \frac{1}{3^n} = \frac{2}{3} .$$
Let $1< p < +\infty$ and $\Omega:=\{x_0\}$, and denote $m:=m^G$, so that $\partial_m \Omega =\{x_1,\ldots, x_n,\ldots\}$.

Let $a_p(x,y,r)=|r|^{p-2}r$, define $u:\Omega_m\rightarrow \R$ by
$$u(x):=\left\{\begin{array}{ll}
    0 & \hbox{ if } x=x_0 \\
    2^\frac{n}{p-1} & \hbox{ if } x=x_n, \ n\geq 1,
  \end{array}\right.$$
$v:\Omega\rightarrow \R$ by $v(x_0)=-\frac{12}{5}$ and $\varphi:\partial_m\Omega\rightarrow \R$ by $\varphi(x_n)=(\frac67)^n$, $n\geq 1$. Then $u,v\in  L^\infty(\Omega,\nu)$, $\varphi\in L^\infty(\partial_m\Omega,\nu)$ and $(x,y)\mapsto a_p(x,y,u(y)-u(x))\in L^{p'}(Q_1,\nu\otimes m_x)$.

Now,
$$u(x_0)-\int_{\Omega_m}\a_p(x_0, y, u(y)-u(x_0))dm_{x_0}(y)$$
$$=u(x_0)-\sum_{n\ge 1}|u(x_n)-u(x_0)|^{p-2}(u(x_n)-u(x_0))m_{x_0}(\{x_n\})$$
$$=u(x_0)-\sum_{n\ge 1}u(x_n)^{p-1}\frac{6}{7^n}=-\sum_{n\ge 1}2^n\frac{6}{7^n}=-\frac{12}{5}=v(x_0)$$
and
$$-\int_{\Omega_m}\a_p(x_n, y, u(y)-u(x_n))dm_{x_n}(y)=-|u(x_0)-u(x_n)|^{p-2}(u(x_0)-u(x_n))m_{x_n}(\{x_0\})=$$
$$=u(x_n)^{p-1}\left(\frac{3}{7}\right)^n=\left(\frac{6}{7}\right)^n=\varphi(x_n).$$
Therefore, $u$ is a solution of the Neumann problem
$$
\left\{ \begin{array}{ll} u-\hbox{div}_m\a_p u = v \quad &\hbox{in} \ \ \Omega, \\ [8pt] \mathcal{N}^{\a_p}_m u = \varphi \quad &\hbox{in} \ \ \partial_m\Omega. \end{array} \right.
$$
Note that $v\in L^\infty(\Omega,\nu)$ and $\varphi\in  L^\infty(\partial_m\Omega,\nu)$ but $u\not\in L^\infty(\Omega_m, \nu)$. Note also that $u\in L^p(\Omega_m,\nu)$ for sufficiently large $p$ ($p>\frac{1}{\log(\frac32)}$).

Consequently, since $v\in L^\infty(\Omega,\nu)$ and $\varphi\in  L^\infty(\partial_m\Omega,\nu)$ but, for $p\leq\frac{1}{\log(\frac32)}$,
$$u = (I + B^m_{\a_p,\varphi})^{-1} v $$
does not belong to $L^p(\Omega_m,\nu)$, we have that this metric random walk space does not satisfy a Poincar\'{e} type inequality like~\eqref{agost004}.}
\end{example}

\subsection{Neumann boundary conditions of   Dipierro--Ros-Oton--Valdinoci type}\label{aus24}

  In this subsection we continue to work under Assumptions~\ref{as1} and \ref{as2}. However, we do not require a Poincar\'{e} type inequality.

 \begin{definition}{\rm
Let
$$L^{m, \infty}(\partial_m\Omega,\nu):=\left\{\varphi:\partial_m\Omega\rightarrow \R \ : \ \varphi \hbox{ is $\nu$-measurable and } \frac{\varphi }{m_{(.)}(\Omega)}\in L^\infty(\partial_m\Omega, \nu) \right\} . $$
}
\end{definition}

\begin{remark}{
Note that $L^{m, \infty}(\partial_m\Omega,\nu)\subset L^{\infty}(\partial_m\Omega,\nu)$.

Suppose that $[V,d_G,m^G]$ is the metric random walk space associated to a locally finite weighted discrete graph as described in Example \ref{graphs1} and let $\Omega\subset V$. Then, if $\partial_m\Omega\subset V$ is a finite set, we have that $L^{m,\infty}(\partial_m\Omega,\nu)= L^{\infty}(\partial_m\Omega,\nu)$.

Consider now the metric random walk space $[\R^N, d, m^J]$ given in Example \ref{graphs101}. Let $\Omega\subset\R^N$ be a bounded domain and denote
$$\Omega_r:=\{x\in\R^N : \hbox{dist}(x,\Omega)<r\} . $$
Suppose that $\hbox{supp}(J)\supseteq B(0,R)$. Then,
$$\left\{\varphi\in L^{\infty}(\partial_m\Omega,\nu) \ : \ \hbox{supp}(\varphi)\subset \Omega_r, \ r<R \right\}\subset L^{m,\infty}(\partial_m\Omega,\nu) .$$
Indeed, let $\varphi\in L^{\infty}(\partial_m\Omega,\nu)$ such that $\hbox{supp}(\varphi)\neq \emptyset$  and $\hbox{supp}(\varphi)\subset \Omega_r$ for some $r<R$. It is enough to see that there exists $\delta>0$ such that
$$m_x^J(\Omega)=\int_\Omega J(x-y) dy \ge \delta >0$$
for every $x\in\hbox{supp}(\varphi)$. Suppose otherwise that there exists a sequence $(x_n)\subset \hbox{supp}(\varphi)$ such that $\lim_n \int_\Omega J(x_n-y) dy=0$, then, since $\partial_m\Omega$ is bounded, there exists a subsequence of $(x_n)$ converging to $x_0\in \hbox{supp}(\varphi)$. Therefore, by the continuity of $J$ and applying Fatou's Lemma we get that $\int_\Omega J(x_0-y) dy=0$. However, this is not possible because $\hbox{dist}(x_0,\Omega)\le r<R$ and, therefore, since $\Omega$ is open, we have that $\mathcal{L}^N(B(x_0,R)\cap\Omega)>0$ with $B(x_0,R)\cap\Omega\subset\hbox{supp}(J(x_0-.))$ so
$$\int_\Omega J(x_0-y) dy\ge \int_{B(x_0,R)\cap\Omega} J(x_0-y) dy>0 . $$
In particular, characteristic functions of sets $A\subset\Omega_r$ with $r<R$, belong to  $L^{m,\infty}(\partial_m\Omega,\nu)$.
}
\end{remark}

In this subsection we study the  problem
\begin{equation}\label{NNeumann4}
\left\{ \begin{array}{ll} u_t(t,x) = \hbox{div}_m\a_p u(t,x),    &x\in  \Omega,\ 0<t<T, \\ \\  {\mathcal{N}^{\a_p}_2} u(t,x) = \varphi(x),    &x\in\partial_m\Omega, \  0<t<T, \\ \\ u(0,x) = u_0(x),    &x\in \Omega. \end{array} \right.
\end{equation}
 To this end, we define the  following operator in   $ L^1(\Omega,\nu)\times L^1(\Omega,\nu)$ associated with the problem.

\begin{definition}{\rm  Let $1 < p < \infty$. Let $\varphi\in L^{m, \infty}(\partial_m\Omega,\nu)$.  We say that $(u,v) \in A^m_{\a_p,\varphi}$ if $ u,v \in L^1(\Omega,\nu)$,
  and there exists a $\nu$-measurable function $\overline u$ in $\Omega_m$  with $\overline{u}_{\vert \Omega} = u$ (that we denote equally as $u$) satisfying
 $$  m_{(\cdot)} (\Omega)|u|^{p-1}\in L^1(\partial_m\Omega,\nu),
$$
$$
(x,y)\mapsto a_p(x,y,u(y)-u(x))\in L^{1}(Q_2,\nu\otimes m_x),
$$
and
\begin{equation}\label{def1538}
\left\{ \begin{array}{ll} -\hbox{div}_m\a_p u = v \quad &\hbox{in} \ \ \Omega, \\ [8pt] \mathcal{N}^{\a_p}_2 u = \varphi \quad &\hbox{in} \ \ \partial_m\Omega, \end{array} \right.
\end{equation}
that is,
$$
v(x) = -  \int_{\Omega_m} \a_p(x,y,u(y)-u(x)) dm_x(y), \quad x \in \Omega,
$$
and
$$
\varphi(x) = -\int_{\Omega} \a_p(x,y,u(y)-u(x)) dm_x(y), \quad x \in \partial_m\Omega.
$$
}
\end{definition}

\begin{remark}\label{elrem001} { Let  $\varphi\in L^{m, \infty}(\partial_m\Omega,\nu)$.

\item{ 1.}  Let   $u\in L^{p}(\Omega,\nu)$ and
let $\Psi:\R\rightarrow \R$ be defined by
$$\Psi(r):=-\int_\Omega a_p(x,y,u(y)-r)dm_x(y). $$
Then, since $\Psi$ is increasing by \eqref{llo3}, the equation
\begin{equation}\label{dominioLinfinito}
-\int_\Omega \a_p(x,y,u(y)-r)dm_x(y)=\varphi(x), \quad x\in\partial_m\Omega,
\end{equation}
 has a  unique solution $r=:\overline{u}(x)$, which is easily seen to be $\nu$-measurable.
\medskip
\item{ 2.} As a consequence, the extension of $u$ to the boundary $\partial_m \Omega$ in Definition~\ref{def1538}   is unique.
\medskip
\item{ 3.}  Let us see that, if $u\in L^{\infty}(\Omega,\nu)$, then
$$\Vert {\overline{u}}\Vert_{L^\infty(\partial_m\Omega,\nu)}\leq \Vert u\Vert_{L^\infty(\Omega, \nu)}+\frac{1}{c^{\frac{1}{p-1}}}\left\Vert \frac{\varphi}{m_{(.)}(\Omega)} \right\Vert_{L^\infty(\partial_m \Omega, \nu)}^{\frac{1}{p-1}} \,.
$$
 Indeed, let us denote $u(x):=\overline{u}(x)$, $x\in\partial_m\Omega$, and  suppose that $\Vert u\Vert_{L^\infty(\partial_m\Omega,\nu)}> \Vert u\Vert_{L^\infty(\Omega,\nu)}$, otherwise the result is trivial. Let $0<\varepsilon<\Vert u\Vert_{L^\infty(\partial_m\Omega,\nu)}-\Vert u\Vert_{L^\infty(\Omega,\nu)}$,
$$A^+:=\{x\in\partial_m\Omega : u (x)>\Vert u\Vert_{L^\infty(\partial_m\Omega,\nu)}-\varepsilon\}$$
and
$$A^-:=\{x\in\partial_m\Omega : u (x)<-\Vert u\Vert_{L^\infty(\partial_m\Omega,\nu)}+\varepsilon\} .$$
Suppose first that $\nu(A^-)>0$ and let
$$B:=\{y\in \Omega : u(y)<-\Vert u\Vert_{L^\infty(\Omega,\nu)}\} .$$
Then, since $\nu(B)=0$ and $m_x<<\nu$ for every $x\in X$, we have that $m_x(B)=0$, $x\in X$, and, consequently, $\nu\otimes m_x (A^-\times B)=0$. Now,
$$u(y)-u(x)>\Vert u\Vert_{L^\infty(\partial_m\Omega,\nu)}-\Vert u\Vert_{L^\infty(\Omega,\nu)}-\varepsilon>0$$
for every $(x,y)\in A^-\times (\Omega\setminus B)$. Therefore, since, by \eqref{llo2}, $\a_p(x,y,r)\geq cr^{p-1}$ for $r\geq 0$, we have that
$$\a_p(x,y,u(y)-u(x))\geq c(\Vert u\Vert_{L^\infty(\partial_m\Omega,\nu)}-\Vert u\Vert_{L^\infty(\Omega,\nu)}-\varepsilon)^{p-1}$$
for every $(x,y)\in A^-\times (\Omega\setminus B)$. Now, integrating \eqref{dominioLinfinito} over $A^-$, we get:
$$\begin{array}{l}\displaystyle
-\int_{A^-}\int_{\Omega}\a_p(x,y,u(y)-u(x))dm_x(y)d\nu(x)
\qquad\qquad \qquad\qquad\\ \\
\displaystyle \qquad \quad
\geq\int_{A^-}\varphi(x)d\nu(x)=\int_{A^-}\frac{\varphi(x)}{m_x(\Omega)}m_x(\Omega)d\nu(x) \\ \\ \qquad\qquad \ \displaystyle
\geq -\left\Vert \frac{\varphi}{m_{(.)}(\Omega)} \right\Vert_{L^\infty(\partial_m \Omega,\nu)}\int_{A^-}m_x(\Omega)d\nu(x)  .
\end{array}$$
but, by the previous computations,
$$-\int_{A^-}\int_{\Omega}\a_p(x,y,u(y)-u(x))dm_x(y)d\nu(x)$$
$$\leq -c(\Vert u\Vert_{L^\infty(\partial_m\Omega,\nu)}-\Vert u\Vert_{L^\infty(\Omega,\nu)}-\varepsilon)^{p-1} \int_{A^-}m_x(\Omega)d\nu(x)$$
thus
$$c(\Vert u\Vert_{L^\infty(\partial_m\Omega,\nu)}-\Vert u\Vert_{L^\infty(\Omega,\nu)}-\varepsilon)^{p-1}\le \left\Vert \frac{\varphi}{m_{(.)}(\Omega)} \right\Vert_{L^\infty(\partial_m \Omega,\nu)} $$
and the result follows since $\varepsilon$ was arbitrarily small. If $\nu(A^-)=0$ then $\nu(A^+)>0$ and we would proceed analogously.}

\end{remark}

  \begin{theorem}\label{CAandRangeCond} Let  $\varphi\in L^{m, \infty}(\partial_m\Omega,\nu)$. The  operator $A^m_{\a_p,\varphi}$ is  completely accretive and satisfies  the range condition
\begin{equation}\label{rangecond01N}
 L^{p'}(\Omega,\nu) \subset R(I+ A^m_{\a_p,\varphi}).
\end{equation}

\end{theorem}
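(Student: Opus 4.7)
The plan is to split the proof into two parts: complete accretivity, which follows from Proposition~\ref{prop23} combined with the monotonicity argument in Remark~\ref{remmon}; and the range condition, which is obtained by a truncation–regularization scheme analogous to the one in Theorem~\ref{teo27}, but now without relying on a Poincar\'{e} inequality.

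For complete accretivity, given $(u_i,v_i)\in A^m_{\a_p,\varphi}$, $i=1,2$, with extensions $\overline{u_i}$ to $\Omega_m$ (uniquely determined by Remark~\ref{elrem001}) and $q\in P_0$, I would take the test function $w:=q(\overline{u_1}-\overline{u_2})\in L^\infty(\Omega_m,\nu)$ and apply Proposition~\ref{prop23} with $\mathbf{j}=2$ to each $u_i$ separately. Subtracting, the boundary integrals $\int_{\partial_m\Omega}\varphi\,w\,d\nu$ cancel because both solutions share the same Neumann datum, leaving
\[
\int_\Omega(v_1-v_2)\,q(u_1-u_2)\,d\nu = \tfrac12\!\int_{Q_2}\!\bigl[\a_p(x,y,u_1(y)-u_1(x))-\a_p(x,y,u_2(y)-u_2(x))\bigr]\bigl[w(y)-w(x)\bigr]\,d(\nu\otimes m_x),
\]
which is nonnegative by Remark~\ref{remmon} applied with the nondecreasing $T=q$.

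For the range condition \eqref{rangecond01N} I would first solve $u+A^m_{\a_p,\varphi}u\ni\phi$ for $\phi\in L^\infty(\Omega,\nu)$, then extend to $\phi\in L^{p'}(\Omega,\nu)$ using that the now-established complete accretivity makes the resolvent an $L^{p'}$-contraction. For the bounded case the skeleton mirrors that of Theorem~\ref{teo27}: approximate the data by $\phi_{n,k}$, $\varphi_{n,k}$, and solve, via Corollary~30 of~\cite{brezisgrenoble} (the operator being monotone, continuous, and coercive thanks to the $\tfrac1n,\tfrac1k$ terms), the truncated–regularized system
\[
T_K(u_{n,k})-\hbox{div}_m\a_p u_{n,k}+\tfrac1n|u_{n,k}|^{p-2}u_{n,k}^+-\tfrac1k|u_{n,k}|^{p-2}u_{n,k}^-=\phi_{n,k}\ \text{in }\Omega,
\]
\[
\mathcal{N}^{\a_p}_2 u_{n,k}+\tfrac1n|u_{n,k}|^{p-2}u_{n,k}^+-\tfrac1k|u_{n,k}|^{p-2}u_{n,k}^-=\varphi_{n,k}\ \text{in }\partial_m\Omega.
\]
Testing against $(u_{n,k}-M)^+$ and $(u_{n,k}+M)^-$ produces an $L^\infty$ bound: the key novelty compared with Theorem~\ref{teo27} is that on $\partial_m\Omega$ the integral is now only over $\Omega$, so the hypothesis $\varphi\in L^{m,\infty}(\partial_m\Omega,\nu)$ is precisely what absorbs the boundary terms, in the spirit of Remark~\ref{elrem001}(3). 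Monotonicity of $u_{n,k}$ in $n$ and $k$ is handled exactly as in Theorem~\ref{teo27}.

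The main obstacle is the passage to the limit: without a Poincar\'{e} inequality, the uniform $L^p(\Omega_m,\nu)$ bound on $u_{n,k}$ cannot be read off from an energy estimate; however, once the $L^\infty$ bound above is in hand, $\nu(\Omega_m)<+\infty$ immediately yields the $L^p$ bound for free. With this, the monotone convergence theorem lets one pass to the limit first in $n$ and then in $k$ in both equations, while the growth bound~\eqref{llo1}, the continuity~\eqref{ll002} of $\a_p$ and the dominated convergence theorem (exactly as at the end of the proof of Theorem~\ref{teo27}) transfer the nonlinear integrals to the limit; the integrability $m_{(\cdot)}(\Omega)|u|^{p-1}\in L^1(\partial_m\Omega,\nu)$ required by the definition of $A^m_{\a_p,\varphi}$ is inherited from the $L^\infty$ bound and $\nu(\partial_m\Omega)<+\infty$. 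The density of $L^\infty(\Omega,\nu)$ in $L^{p'}(\Omega,\nu)$ and the $L^{p'}$-contraction of the resolvent then give \eqref{rangecond01N} for arbitrary $\phi\in L^{p'}(\Omega,\nu)$.
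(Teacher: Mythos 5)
Your treatment of the complete accretivity and of the range condition for bounded data follows the paper's route: the accretivity is exactly the Proposition~\ref{prop23}/Remark~\ref{remmon} computation, and for $\phi\in L^\infty(\Omega,\nu)$ the paper indeed repeats the scheme of Theorem~\ref{teo27} with $A_2$ built from $\int_\Omega$ rather than $\int_{\Omega_m}$, replacing the Poincar\'e-based energy estimate by a uniform $L^\infty(\Omega_m,\nu)$ bound. One caveat on that step: testing against $(u_{n,k}-M)^+$ with $M=M_{\phi,\varphi,n,k}$ only yields a bound that degenerates as $n,k\to\infty$ (its role is merely to remove the truncation $T_K$); the uniform bound is obtained by a separate argument, integrating the boundary equation over the level sets of Remark~\ref{elrem001}.{\it 3} to control $\Vert u_{n,k}\Vert_{L^\infty(\partial_m\Omega,\nu)}$ by $\Vert u_{n,k}\Vert_{L^\infty(\Omega,\nu)}$ plus a constant, and then integrating the interior equation over the sets where $u_{n,k}$ is within $\varepsilon$ of its essential extrema and invoking the growth bound~\eqref{llo1}. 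You name the right ingredients ($L^{m,\infty}$ hypothesis, Remark~\ref{elrem001}.{\it 3}) but not the mechanism that actually produces uniformity.

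The genuine gap is in the passage from $L^\infty$ to $L^{p'}$ data. Complete accretivity plus density of $L^\infty(\Omega,\nu)$ in $L^{p'}(\Omega,\nu)$ only shows that the resolvents of the approximating data form a Cauchy sequence in $L^{p'}(\Omega,\nu)$; to conclude $\phi\in R(I+A^m_{\a_p,\varphi})$ you must show that the limit $u$ belongs to $D(A^m_{\a_p,\varphi})$ with $\phi-u\in A^m_{\a_p,\varphi}u$, i.e.\ that the graph is closed along this sequence. That is not automatic for this nonlinear operator: membership in $A^m_{\a_p,\varphi}$ requires an extension of $u$ to $\partial_m\Omega$ satisfying the boundary equation together with $\a_p(x,y,u(y)-u(x))\in L^1(Q_2,\nu\otimes m_x)$ and $m_{(\cdot)}(\Omega)|u|^{p-1}\in L^1(\partial_m\Omega,\nu)$, and none of these is controlled by convergence in $L^{p'}(\Omega,\nu)$, which sees nothing of the boundary values. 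This is precisely where the bulk of the paper's Step~2 lies: the order contraction~\eqref{dom1919} gives monotone convergence of $u_{n,k}$ on $\Omega$; a Fatou-type contradiction argument excludes blow-up of the boundary values $u_{n,k}(x)$ for $\nu$-a.e.\ $x\in\partial_m\Omega$; two-sided monotone domination allows passing to the limit in both equations; and a final computation using~\eqref{llo2} and the reversibility of $\nu$ yields $m_{(\cdot)}(\Omega)|u|^{p-1}\in L^1(\partial_m\Omega,\nu)$. Your closing sentence replaces all of this by ``contraction plus density,'' which does not suffice.
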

\begin{proof}

The proof of the complete accretivity of $A^m_{\a_p,\varphi}$ follows similarly to that of $B^m_{\a_p,\varphi}$.

  Let us see that $A^m_{\a_p,\varphi}$ satisfies the range condition~\eqref{rangecond01N}, that is, let us prove that, for $\phi\in L^{p'}(\Omega,\nu)$, there exists $u\in D(A^m_{\a_p,\varphi})$ such  that $$u+A^m_{\a_p,\varphi}u\ni\phi.$$
  We divide the proof into two steps.

\noindent {\bf Step 1}.  Assume that $\phi\in L^\infty(\Omega,\nu)$.
Working as in the proof of Theorem \ref{teo27} but defining
 $A_2$ by
 $$A_2(u)(x):= -\int_{\Omega}\a_p(x,y,u(y)-u(x))dm_x(y)
+\frac{1}{n}|u(x)|^{p-2}u^+(x)
-\frac{1}{k}|u(x)|^{p-2}u^-(x),$$
for $x\in \partial_m\Omega$, we have that,
  for $k,n\in \mathbb{N}$ and $K>0$,
 there exist $u_{n,k}\in L^{p}(\Omega_m,\nu)$ such that
 $$\begin{array}{l}
\displaystyle T_K(u_{n,k})(x)-\int_{\Omega_m}\a_p(x,y,u_{n,k}(y)-u_{n,k}(x))dm_x(y) \qquad\qquad  \\
\displaystyle \qquad\qquad +\frac{1}{n}|u_{n,k}(x)|^{p-2}u_{n,k}^+(x)
-\frac{1}{k}|u_{n,k}(x)|^{p-2}u_{n,k}^-(x)=\phi(x),
\end{array}$$
$x\in\Omega$, and
$$  -\int_{\Omega}\a_p(x,y,u_{n,k}(y)-u_{n,k}(x))dm_x(y)
+\frac{1}{n}|u_{n,k}(x)|^{p-2}u_{n,k}^+(x)
-\frac{1}{k}|u_{n,k}(x)|^{p-2}u_{n,k}^-(x)=\varphi(x),$$
$x\in \partial_m\Omega$. Now, let $M>0$. Multiplying the first equation by $(u_{n,k}-M)^+$ and integrating over $\Omega$ with respect to $\nu$, by Proposition~\ref{prop23}, we get  that, after removing some positive terms,
$$
\begin{array}{c}\displaystyle
\int_{\Omega}T_K(u_{n,k})(u_{n,k}-M)^+ d\nu+ \frac{1}{n}\int_{\partial_m\Omega}|u_{n,k}|^{p-2}u_{n,k}^+(u_{n,k}-M)^+d\nu
\\ \\
\displaystyle \le
\int_{\Omega}\phi (u_{n,k}-M)^+d\nu + \int_{\partial_m\Omega}\varphi (u_{n,k}-M)^+d\nu .
\end{array}
$$
Therefore, taking
$$M=M_{\phi,\varphi,n,k}:=\max\left\{\Vert \phi\Vert_{L^\infty(\Omega,\nu)},\left(n\Vert \varphi\Vert_{L^\infty(\partial_m\Omega,\nu)}\right)^\frac{1}{p-1},\left(k\Vert \varphi\Vert_{L^\infty(\partial_m\Omega,\nu)}\right)^\frac{1}{p-1}\right\}$$
we get that
$$\begin{array}{c}
\displaystyle\int_{\Omega}\big(T_K(u_{n,k})-M\big)(u_{n,k}-M)^+d\nu + \frac{1}{n}\int_{\partial_m\Omega}\big(|u_{n,k}|^{p-2}u_{n,k}^+-M^{p-1})(u_{n,k}-M)^+d\nu
\\ \\ \displaystyle
\le
\int_{\Omega}(\phi-M) (u_{n,k}-M)^+d\nu + \int_{\partial_m\Omega}(\varphi-\frac1n M^{p-1}) (u_{n,k}-M)^+d\nu\le 0
\end{array}$$
 and, consequently, taking $K>M$, we get that
$$u_{n,k}\le M\quad\nu-\hbox{a.e. in }\Omega_m;$$
and, similarly, we get that
$$u_{n,k}\ge -M\quad\nu-\hbox{a.e. in }\Omega_m.$$
Hence,
$$\Vert u_{n,k}\Vert_{L^\infty(\Omega_m,\nu)}\le M  .$$
Therefore, we have
\begin{equation}\label{sab1050a}\begin{array}{c}\displaystyle u_{n,k}(x)-\int_{\Omega_m}\a_p(x,y, u_{n,k}(y)-u_{n,k}(x))dm_x(y)+ \qquad\qquad \qquad\qquad\\ \\
\displaystyle \qquad\qquad \qquad\qquad+
\frac{1}{n}|u_{n,k}(x)|^{p-2}u_{n,k}^+(x)
-\frac{1}{k}|u_{n,k}(x)|^{p-2}u_{n,k}^-(x)=\phi(x),
\end{array}\end{equation}
for $x\in\Omega$;
and
\begin{equation}\label{sab1050}\begin{array}{c}\displaystyle
-\int_{\Omega}\a_p(x,y, u_{n,k}(y)-u_{n,k}(x))dm_x(y)+
\qquad\qquad \qquad\qquad\\ \\
\displaystyle \qquad\qquad \qquad\qquad
+\frac{1}{n}|u_{n,k}(x)|^{p-2}u_{n,k}^+(x)
-\frac{1}{k}|u_{n,k}(x)|^{p-2}u_{n,k}^-(x)=\varphi(x),
\end{array}\end{equation}
for $x\in \partial_m\Omega$.

Let us now see that $\Vert u_{n,k}\Vert_{L^\infty(\Omega_m,\nu)}$ is uniformly bounded in $n$ and $k$. First, working as in the proof of Remark~\ref{elrem001}.{\it 3}, we prove  that
$$\Vert u_{n,k}\Vert_{L^\infty(\partial_m\Omega,\nu)}\leq \Vert u_{n,k}\Vert_{L^\infty(\Omega,\nu)}+\frac{1}{c^{\frac{1}{p-1}}}\left\Vert \frac{\varphi}{m_{(.)}(\Omega)} \right\Vert_{L^\infty(\partial_m \Omega,\nu)}^{\frac{1}{p-1}}
$$
for every $n$, $k\ge 1$. Indeed, define $A^-$ as in that remark  and integrate \eqref{sab1050} over $A^-$ with respect to $\nu$ (note that the term involving $\frac{1}{n}|u_{n,k}(x)|^{p-2}u_{n,k}^+(x)
-\frac{1}{k}|u_{n,k}(x)|^{p-2}u_{n,k}^-(x)$ does not affect the reasoning). The same can be done with $A^+$.
Therefore, it is enough to see that $\Vert u_{n,k}\Vert_{L^\infty(\Omega,\nu)}$ is uniformly bounded in $n$ and $k$. Let
  $$K:= \frac{1}{c^{\frac{1}{p-1}}}\left\Vert \frac{\varphi}{m_{(.)}(\Omega)} \right\Vert_{L^\infty(\partial_m \Omega,\nu)}^{\frac{1}{p-1}} , $$
 so that $\Vert u_{n,k}\Vert_{L^\infty(\partial_m\Omega,\nu)}\leq \Vert u_{n,k}\Vert_{L^\infty(\Omega,\nu)}+K$. Now, if all of the $u_{n,k}$ are $\nu$-null the result is trivial. Therefore, fix some $u_{n,k}\not\equiv 0$ and $0<\varepsilon<\Vert u_{n,k}\Vert_{L^\infty(\Omega,\nu)}$. Let
   $$A_\star^+:=\{x\in \Omega : u_{n,k}(x)>\Vert u_{n,k}\Vert_{L^\infty(\Omega,\nu)}-\varepsilon\}$$
and
$$A_\star^-:=\{x\in \Omega : u_{n,k}(x)<-\Vert u_{n,k}\Vert_{L^\infty(\Omega,\nu)}+\varepsilon\} .$$
Suppose first that $\nu(A_\star^+)>0$. Integrating over $A_\star^+$ in \eqref{sab1050a} we get:
$$\begin{array}{l}\displaystyle \int_{A_\star^+}u_{n,k}(x)d\nu(x)-\int_{A_\star^+}\int_{\Omega_m}\a_p(x,y,u_{n,k}(y)-u_{n,k}(x))dm_x(y)d\nu(x) \\ \\
\displaystyle \qquad\qquad\qquad+
\frac{1}{n}\int_{A_\star^+}|u_{n,k}(x)|^{p-2}u_{n,k}^+(x)d\nu(x)
=\int_{A_\star^+}\phi(x)d\nu(x)\leq \nu(A_\star^+)\Vert \phi \Vert_{L^\infty(\Omega,\nu)}.
\end{array} $$
Consequently, dividing by $\nu(A_\star^+)$, we have
$$\Vert u_{n,k}\Vert_{L^\infty(\Omega,\nu)}-\varepsilon-\frac{1}{\nu(A_\star^+)}\int_{A_\star^+}\int_{\Omega_m}\a_p(x,y,u_{n,k}(y)-u_{n,k}(x))dm_x(y)d\nu(x) \leq \Vert \phi \Vert_{L^\infty(\Omega,\nu)} .$$
Now, $\a_p(x,y,u_{n,k}(y)-u_{n,k}(x))\leq \a_p(x,y,K+\varepsilon)\leq C \left(1+ (K+ \varepsilon)^{p-1}\right)$ for $\nu\otimes m$-a.e. $(x,y)\in A_\star^+\times \Omega_m$, thus, since $m_x(\Omega_m)=1$ for $\nu$-a.e. $x\in \Omega$,
$$
\frac{1}{\nu(A_\star^+)}\int_{A_\star^+}
\int_{\Omega_m}\a_p(x,y,u_{n,k}(y)-u_{n,k}(x))dm_x(y)d\nu(x)
\leq C\left(1+(K+\varepsilon)^{p-1}\right), $$
and, since $\varepsilon>0$ is arbitrarily small, we conclude that
$$\Vert u_{n,k}\Vert_{L^\infty(\Omega,\nu)} \leq \Vert \phi \Vert_{L^\infty(\Omega,\nu)} + C\left(1+\frac{1}{c}\left\Vert \frac{\varphi}{m_{(.)}(\Omega)} \right\Vert_{L^\infty(\partial_m \Omega,\nu)}\right)
$$
where the right hand side does not depend on $n$ or $k$. If $\nu(A_\star^+)=0$ then $\nu(A_\star^-)>0$ and we proceed similarly, that is, integrate over $A_\star^-$ in \eqref{sab1050a} to obtain that
$$\begin{array}{l}\displaystyle \int_{A_\star^-}u_{n,k}(x)d\nu(x)-\int_{A_\star^-}\int_{\Omega_m}\a_p(x,y,u_{n,k}(y)-u_{n,k}(x))dm_x(y)d\nu(x) \\ \\
\displaystyle \qquad\qquad\qquad-
\frac{1}{k}\int_{A_\star^-}|u_{n,k}(x)|^{p-2}u_{n,k}^-(x)d\nu(x)
=\int_{A_\star^-}\phi(x)d\nu(x)\geq -\nu(A_\star^-)\Vert \phi \Vert_{L^\infty(\Omega,\nu)}.
\end{array} $$
Then, dividing by $\nu(A_\star^-)$, we have
$$-\Vert u_{n,k}\Vert_{L^\infty(\Omega,\nu)}+\varepsilon-\frac{1}{\nu(A_\star^-)}\int_{A_\star^-}\int_{\Omega_m}\a_p(x,y,u_{n,k}(y)-u_{n,k}(x))dm_x(y)d\nu(x) \geq -\Vert \phi \Vert_{L^\infty(\Omega,\nu)} $$
which, using \eqref{llo4}, is equivalent to
$$\Vert u_{n,k}\Vert_{L^\infty(\Omega,\nu)} \leq \Vert \phi \Vert_{L^\infty(\Omega,\nu)}+\varepsilon+\frac{1}{\nu(A_\star^-)}\int_{A_\star^-}\int_{\Omega_m}\a_p(x,y,u_{n,k}(x)-u_{n,k}(y))dm_x(y)d\nu(x). $$
Now, $\a_p(x,y,u_{n,k}(x)-u_{n,k}(y))\leq \a_p(x,y,K+\varepsilon)\leq C \left(1+ (K+ \varepsilon)^{p-1}\right)$ for $\nu\otimes m$-a.e. $(x,y)\in A_\star^-\times \Omega_m$ and we conclude as before.

  Now, let us see that $\{u_{n,k}\res\Omega\} $ is $\nu$-a.e. nondecreasing in $n$, and $\nu$-a.e.  nonincreasing in $k$.  Let $n'<n$. Multiplying~\eqref{sab1050a} for $u_{n',k}$ and $u_{n,k}$ by $(u_{n',k}-u_{n,k})^+$, integrating over $\Omega$ with respect to $\nu$, and subtracting we obtain
$$ \begin{array}{l}\displaystyle \int_{\Omega}\left(u_{n',k}(x)-u_{n,k}(x)\right)\left(u_{n',k}(x)-u_{n,k}(x)\right)^+d\nu(x)
\\ \\
\displaystyle +\int_{\Omega}\left(\frac{1}{n'}|u_{n',k}(x)|^{p-2}u_{n',k}^+(x)-\frac{1}{n}|u_{n,k}(x)|^{p-2}u_{n,k}^+(x)\right)\left(u_{n',k}(x)-u_{n,k}(x)\right)^+d\nu(x)-\\ \\
\displaystyle-\int_{\Omega}\frac{1}{k}\left(|u_{n',k}(x)|^{p-2}u_{n',k}^-(x)-|u_{n,k}(x)|^{p-2}u_{n,k}^-(x)\right)\left(u_{n',k}(x)-u_{n,k}(x)\right)^+d\nu(x)
\\  \\
\displaystyle
-\int_\Omega\int_{\Omega_m}\a_p(x,y, u_{n',k}(y)-u_{n',k}(x))\left(u_{n',k}(x))-u_{n,k}(x)\right)^+dm_x(y)d\nu(x)
\\  \\
\displaystyle
+\int_\Omega\int_{\Omega_m}\a_p(x,y, u_{n,k}(y)-u_{n,k}(x))\left(u_{n',k}(x))-u_{n,k}(x)\right)^+dm_x(y)d\nu(x)
\\  \\
\displaystyle
=\int_\Omega \phi(x)d\nu(x)-\int_\Omega \phi(x)d\nu(x)=0 .
\end{array}
$$
Now, by Proposition~\ref{prop23} with $ w(x)=(u_{n',k}(x)-u_{n,k}(x))^+$ and recalling \eqref{llo3} (see also Remark~\ref{remmon}), and then using \eqref{sab1050} we obtain
$$\begin{array}{l}\displaystyle
-\int_\Omega\int_{\Omega_m}\a_p(x,y, u_{n',k}(y)-u_{n',k}(x))\left(u_{n',k}(x))-u_{n,k}(x)\right)^+dm_x(y)d\nu(x)
\\  \\
\qquad \displaystyle
+\int_\Omega\int_{\Omega_m}\a_p(x,y, u_{n,k}(y)-u_{n,k}(x))\left(u_{n',k}(x))-u_{n,k}(x)\right)^+dm_x(y)d\nu(x)
 \\ \\
 \displaystyle
=\frac12\int_{Q_2}(\a_p(x,y, u_{n',k}(y)-u_{n',k}(x))-\a_p(x,y, u_{n,k}(y)-u_{n,k}(x)))\times
\\  \\
\displaystyle
\qquad \qquad \qquad  \qquad\left((u_{n',k}(y))-u_{n,k}(y))^+-(u_{n',k}(x)-u_{n,k}(x))\right)^+dm_x(y)d\nu(x)
\\  \\
\qquad\displaystyle
-\int_{\partial_m \Omega}\int_{\Omega_m}(\a_p(x,y, u_{n',k}(y)-u_{n',k}(x))-\a_p(x,y, u_{n,k}(y)-u_{n,k}(x)))\times
\\  \\
\displaystyle
\qquad  \qquad\qquad  \qquad\quad\qquad  \qquad\qquad  \qquad\qquad  \left(u_{n',k}(x))-u_{n,k}(x)\right)^+dm_x(y)d\nu(x)
\\  \\
\displaystyle
\ge\int_{\partial_m\Omega}\left( \frac{1}{n'} |u_{n',k}(x)|^{p-2}u_{n',k}^+(x)-\frac{1}{n}|u_{n,k}(x)|^{p-2}u_{n,k}^+(x) \right)\left(u_{n',k}(x)-u_{n,k}(x)\right)^+d\nu(x)-
\\ \\
\displaystyle
\qquad-\frac{1}{k}\int_{\partial_m\Omega}\left(  |u_{n',k}(x)|^{p-2}u_{n',k}^-(x)-|u_{n,k}(x)|^{p-2}u_{n,k}^-(x) \right)\left(u_{n',k}(x)-u_{n,k}(x)\right)^+d\nu(x)
  \,. \qquad
\end{array}
$$
Consequently,
$$
 \begin{array}{l}\displaystyle \int_{\Omega}\left(u_{n',k}(x)-u_{n,k}(x)\right)\left(u_{n',k}(x)-u_{n,k}(x)\right)^+d\nu(x)
\\ \\
\displaystyle +\int_{\Omega}\left(\frac{1}{n'}|u_{n',k}(x)|^{p-2}u_{n',k}^+(x)-\frac{1}{n}|u_{n,k}(x)|^{p-2}u_{n,k}^+(x)\right)\left(u_{n',k}(x)-u_{n,k}(x)\right)^+d\nu(x)-\\ \\
\displaystyle-\int_{\Omega}\frac{1}{k}\left(|u_{n',k}(x)|^{p-2}u_{n',k}^-(x)-|u_{n,k}(x)|^{p-2}u_{n,k}^-(x)\right)\left(u_{n',k}(x)-u_{n,k}(x)\right)^+d\nu(x)
\\  \\
\displaystyle
+ \int_{\partial_m\Omega}\left( \frac{1}{n'} |u_{n',k}(x)|^{p-2}u_{n',k}^+(x)-\frac{1}{n}|u_{n,k}(x)|^{p-2}u_{n,k}^+(x) \right)\left(u_{n',k}(x)-u_{n,k}(x)\right)^+d\nu(x)-
\\ \\
\displaystyle
-\frac{1}{k}\int_{\partial_m\Omega}\left(  |u_{n',k}(x)|^{p-2}u_{n',k}^-(x)-|u_{n,k}(x)|^{p-2}u_{n,k}^-(x) \right)\left(u_{n',k}(x)-u_{n,k}(x)\right)^+d\nu(x)
 \le 0 \,. \qquad
\end{array}
$$
 Therefore, since the last four summands on the left hand side are non-negative we get that
$$\int_{\Omega}\left(u_{n',k}(x)-u_{n,k}(x)\right)\left(u_{n',k}(x)-u_{n,k}(x)\right)^+d\nu(x)\leq 0$$
so $u_{n,k}\res\Omega$ is $\nu$-a.e. nondecreasing in $n$. Similarly, we get that $u_{n,k}\res\Omega$ is $\nu$-a.e. nonincreasing in $k$.

 Let us see that these monotonicities also hold in $\partial_m\Omega$. Recall that
$$-\int_{\Omega}\a_p(x,y,u_{n,k}(y)-u_{n,k}(x))dm_x(y)
=\varphi(x)- \frac{1}{n}|u_{n,k}(x)|^{p-2}u_{n,k}^+(x)
+\frac{1}{k}|u_{n,k}(x)|^{p-2}u_{n,k}^-(x),$$
for $x\in \partial_m\Omega$.  Now, let $N\subset X$ be a $\nu$-null set such that, for every $x\in X\setminus N$,
$$(\a_p(x,y,r)-\a_p(x,y,s))(r-s) > 0 \quad \hbox{for $m_x$-a.e. $y\in X$ and for all $r\neq s$.}$$
Then, for a fixed $k\in\N$, let $n'<n$, $x\in\partial_m\Omega\setminus N$ and suppose that $u_{n',k}(x)>u_{n,k}(x)$. Since $(u_{n,k})\res \Omega$ is $\nu$-a.e. nondecreasing in $n$, by the absolute continuity of $m_x$ with respect to $\nu$, we have that $(u_{n,k})\res \Omega$ is $m_x$-a.e. nondecreasing in $n$, therefore
\begin{align*}
0 & <-\int_{\Omega}\left(\a_p(x,y, u_{n',k}(y)-u_{n',k}(x))-\a_p(x,y, u_{n,k}(y)-u_{n,k}(x))\right) dm_x(y)\\
 & =\frac{1}{n}|u_{n,k}(x)|^{p-2}u_{n,k}^+(x)- \frac{1}{n'}|u_{n',k}(x)|^{p-2}u_{n',k}^+(x) \\
 & \quad +\frac{1}{k}\left(|u_{n',k}(x)|^{p-2}u_{n',k}^-(x)
- |u_{n,k}(x)|^{p-2}u_{n,k}^-(x)\right)\le 0
\end{align*}
which is a contradiction. Consequently, $u_{n,k}\res \partial_m\Omega$ is $\nu$-a.e. nondecreasing in $n$. Similarly, $u_{n,k}\res \partial_m\Omega$ is $\nu$-a.e. nonincreasing in $k$.

 Then, for $\nu$-a.e. $x\in\Omega_m$, we can pass to the limit in $n$, and then in $k$, in~\eqref{sab1050a} and \eqref{sab1050}, to get $u\in L^\infty(\Omega_m,\nu)$ such that
$$\  u(x)-\int_{\Omega_m}\a_p(x,y,u(y)-u(x))dm_x(y) =\phi(x),\quad x\in\Omega,
 $$
 and
$$-\int_{\Omega}\a_p(x,y,u(y)-u(x))dm_x(y)
=\varphi(x),\quad x\in \partial_m\Omega.$$
Therefore, for $\phi\in L^\infty(\Omega,\nu)$ the range condition holds.

\noindent {\bf Step 2}.  Let us now take $\phi\in  L^{p'}(\Omega,\nu)$. Let $\phi_{n,k} := \sup\{
\inf\{\phi,n\},-k\}$,   which is nondecreasing in $n$ and nonincreasing in $k$. By Step 1, there exists   a solution $u_{n,k}\in L^\infty(\Omega_m,\nu)$ of
$$ u_{n,k}+ A^m_{\a_p,\varphi}(u_{n,k})\ni\phi_{n,k},
$$
that is,
$$  u_{n,k}(x)-\int_{\Omega_m}\a_p(x,y,u_{n,k}(y)-u_{n,k}(x))dm_x(y) =\phi_{n,k}(x),\quad x\in\Omega,
$$
 and
$$-\int_{\Omega}\a_p(x,y,u_{n,k}(y)-u_{n,k}(x))dm_x(y)
=\varphi(x),\quad x\in \partial_m\Omega.
$$

Let us see the monotonicity properties of $u_{n,k}$ . By the complete accretivity, we have that
\begin{equation}\label{dom1919}\Vert  \left(u_{n',k'}-u_{n,k}\right)^\pm  \Vert_{L^p(\Omega,\nu)}\leq  \Vert \left(\phi_{n',k'}-\phi_{n,k}\right)^\pm  \Vert_{L^p(\Omega,\nu)}
\end{equation}
and
$$\Vert  \left(u_{n',k'}-u_{n,k}\right)^\pm  \Vert_{L^{p'}(\Omega,\nu)}\leq  \Vert \left(\phi_{n',k'}-\phi_{n,k}\right)^\pm  \Vert_{L^{p'}(\Omega,\nu)} \, .
$$
This implies, for example, that if $n'<n$ then $u_{n',k}\le u_{n,k}$ $\nu$-a.e. in $\Omega$ thus, as before, $u_{n,k}\res\Omega$ is $\nu$-a.e. nondecreasing in $n$ and $\nu$-a.e. nonincreasing in $k$. Moreover, it also implies the convergence of $u_{n,k}\res\Omega$ in $L^p(\Omega,\nu)$.

 On the other hand, for $n'<n$, we have
$$ \int_{\Omega}\left(\a_p(x,y, u_{n',k}(y)-u_{n',k}(x))-\a_p(x,y, u_{n,k}(y)-u_{n,k}(x))\right) dm_x(y)=0 \  $$
for every $x\in \partial_m\Omega$ and, therefore, the same reasoning as before yields that $u_{n,k}(x)$ is nondecreasing in $n$ and nonincreasing in $k$ for $\nu$-a.e. $x\in\partial_m \Omega$.

We want to pass to the limit in \label{dom2001}
\begin{equation}\label{sab1927}
 \left\{\begin{array}{lr}
\displaystyle u_{n,k}(x)-\int_{\Omega_m}\a_p(x,y,  u_{n,k}(y) -u_{n,k}(x))dm_x(y) =\phi_{n,k}(x),\quad x\in\Omega, & \qquad{\rm (a)}
 \\ \\
 \displaystyle
 -\int_{\Omega}\a_p(x,y, u_{n,k}(y)-  u_{n,k}(x) )dm_x(y)
=\varphi(x),\quad x\in \partial_m\Omega. & \qquad{\rm (b)}
\end{array}\right.
\end{equation}

We start by letting $n\to +\infty$. By~\eqref{dom1919}, we have that $u_{n,k}\to u_k$ in $L^p(\Omega,\nu)$. Hence, there exists $ h_k\in L^{p}(\Omega,\nu)$ such that
$$|u_{n,k}|\le h_k \ \hbox { $\nu$-a.e. in  } \Omega.$$
Note that $h_k\in L^{p}(\Omega,m_x)$ for $\nu$-a.e. $x\in \partial_m\Omega$, let $B\subset X$ be the $\nu$-null set where this is not satisfied and such that $\varphi(x)<+\infty$ for $x\in\partial_m\Omega\setminus B$.

Suppose that there exists $x\in\partial_m\Omega\setminus B$ such that $u_{n,k}(x)\to +\infty$. Then, given $M>0$, there exists $n_0$ such that, for $n\ge n_0$, $u_{n,k}(x)>M$. Hence, for $n\ge n_0$,
$$-\a_p(x,., u_{n,k}(.)-  u_{n,k}(x) )\ge -\a_p(x,.,h_k(.)-M) \in L^{p'}(\Omega, m_x),$$
so we may apply Fatou's lemma to obtain:
$$\int_\Omega\liminf_n -\a_p(x,y, u_{n,k}(y)-  u_{n,k}(x) )dm_x(y) \qquad \qquad$$
$$\qquad \qquad \le\liminf_n \int_\Omega -\a_p(x,y, u_{n,k}(y)-  u_{n,k}(x) )dm_x(y)=\varphi(x).$$
However, this is a contradiction since $u_{n,k}(x)\to +\infty$, $\varphi(x)<+\infty$ and $\lim_nu_{n,k}(y)=u_k(y) <+\infty$ for $m_x$-a.e $y\in\Omega$.

Therefore, for $\nu$-almost every $x\in\partial_m\Omega$, $u_{n,k}(x)\to u_k(x)<+\infty$ (thus, in particular, $u_k$ is $\nu$-measurable on $\partial_m\Omega$) and we can use the dominated convergence theorem to pass to the limit in~\eqref{sab1927}(b) for $\nu$-a.e. $x\in\partial_m\Omega$, obtaining:
$$ -\int_{\Omega}\a_p(x,y, u_k (y)-  u_k (x) )dm_x(y)
=\varphi(x),\quad x\in\partial_m\Omega.$$
Indeed, note that
$$|\a_p(x,.,u_{n,k}(.)-u_{n,k}(x))|\le \tilde{C}\left(1+\max\{|u_{1,k}(x)|,|u_{k}(x)|\}^{p-1}+|h_k(.)|^{p-1}\right)\in L^{p'}(\Omega,\nu)$$
for $\nu$-a.e. $x\in\partial_m\Omega$, and
$$|\a_p(x,.,u_{n,k}(.)-u_{n,k}(x))|\le \tilde{C}\left(1+\max\{|u_{1,k}(x)|,|u_{k}(x)|\}^{p-1}+|h_k(.)|^{p-1}\right)\in L^{p'}(\Omega,m_x)$$
for $\nu$-a.e. $x\in\partial_m\Omega$. Consequently, we also obtain that
$$\a_p(x,.,u_{k}(.)-u_{k}(x))\in L^{p'}(\Omega,\nu )\cap L^{p'}(\Omega,m_x )$$
for $\nu$-a.e. $x\in\partial_m\Omega$.

Now,
$$\int_{\Omega_m}\a_p(x,y,  u_{n,k}(y) -u_{n,k}(x))dm_x(y) =u_{n,k}(x)-\phi_{n,k}(x),\quad x\in\Omega,$$
thus, by the monotonicity of $\{u_{n,k}\}_n$,
$$\int_{\Omega_m}\a_p(x,y,  u_{n,k}(y) -u_{k}(x))dm_x(y) \le u_{n,k}(x)-\phi_{n,k}(x),\quad x\in\Omega,$$
and
$$ \int_{\Omega_m}\a_p(x,y,  u_{k}(y) -u_{n,k}(x))dm_x(y) \ge u_{n,k}(x)-\phi_{n,k}(x),\quad x\in\Omega .$$
Now, the right hand sides converge for $\nu$-a.e. $x\in\Omega$ and, for $\nu$-a.e. $x\in\Omega$, $\{\a_p(x,.,  u_{n,k}(.) -u_{k}(x))\}_n$ is a nondecreasing sequence   with bounded $m_x$-integrals and
$$\a_p(x,.,  u_{n,k}(.) -u_{k}(x))\ge \a_p(x,.,  u_{1,k}(.) -u_{k}(x))\in L^\infty(\Omega_m,\nu) ,$$
and $\{\a_p(x,.,  u_{k}(.) -u_{n,k}(x))\}_n$ is a nonincreasing sequence   with bounded $m_x$-integrals and
$$\a_p(x,.,  u_{k}(.) -u_{n,k}(x))\le \a_p(x,.,  u_{1}(.) -u_{n,k}(x))\in L^\infty(\Omega_m,\nu) ,$$
so we may apply the monotone convergence theorem to get
$$\int_{\Omega_m}\a_p(x,y,  u_{n,k}(y) -u_{k}(x))dm_x(y) \to \int_{\Omega_m}\a_p(x,y,  u_{k}(y) -u_{k}(x))dm_x(y),\ \hbox{ for $\nu$-a.e. } x\in\Omega \ ,$$
and
$$ \int_{\Omega_m}\a_p(x,y,  u_{k}(y) -u_{n,k}(x))dm_x(y) \to \int_{\Omega_m}\a_p(x,y,  u_{k}(y) -u_{k}(x))dm_x(y),\ \hbox{ for $\nu$-a.e. } x\in\Omega \ ,$$
obtaining also that $a_p(x,\cdot,u_k(\cdot)-u_k(x))\in L^1(\Omega_m, m_x)$ for $\nu$-a.e. $x\in\Omega$.
\bigskip
Consequently, since
$$\int_{\Omega_m}\a_p(x,y,  u_{n,k}(y) -u_{k}(x))dm_x(y)\qquad\qquad\qquad\qquad $$
$$\le\int_{\Omega_m}\a_p(x,y,  u_{n,k}(y) -u_{n,k}(x))dm_x(y) $$
$$\qquad\qquad\qquad\qquad \le\int_{\Omega_m}\a_p(x,y,  u_{k}(y) -u_{n,k}(x))dm_x(y), $$
we can also pass  to the limit in \eqref{sab1927}(b)  to get
$$u_k (x)-\int_{\Omega_m}\a_p(x,y,  u_k (y) -u_k(x))dm_x(y) =\phi_k(x),\quad x\in\Omega.$$

  The same argument but integrating over $\Omega\times\Omega_m$ with respect to $\nu\otimes m_x$ gives that $a_p(x,y,u_k(y)-u_k(x))\in L^1(\Omega\times\Omega_m, \nu\otimes m_x)$. Moreover, the same reasoning on
$$-\int_{\Omega}\a_p(x,y,  u_{n,k}(y) -u_{n,k}(x))dm_x(y) =\varphi(x),\quad x\in\partial_m\Omega$$
gives that $a_p(x,y,u_k(y)-u_k(x))\in L^1(\partial_m\Omega\times\Omega,\nu\otimes m_x)$, so we get that $a_p(x,y,u_k(y)-u_k(x))\in L^1(Q_2,\nu\otimes m_x)$.

 Finally, we take limits as $k\to +\infty$.
We may repeat the previous reasoning to obtain that $u_k(x)\to u(x)>-\infty$ for $\nu$-a.e. $x\in\partial_m\Omega$. Consequently, we have that $u_k\to u$ in $L^p(\Omega,\nu)$ and $u_k$ tends to a measurable $\nu$-a.e. finite function $u$ in $\partial_m\Omega$. Then, we apply the monotone convergence theorem in the same way to get:
\begin{equation}\label{ind002}
\left\{\begin{array}{l}
\displaystyle u (x)-\int_{\Omega_m}\a_p(x,y,  u (y) -u(x))dm_x(y) =\phi(x),\quad x\in\Omega\\ \\
\displaystyle -\int_{\Omega}\a_p(x,y, u (y)-  u(x) )dm_x(y)
=\varphi(x),\quad x\in\partial_m\Omega,
\end{array}\right.
\end{equation}
  where $$\a_p(x,y,u(y)-u(x))\in L^1(Q_2,\nu\otimes m_x).$$

By \eqref{llo2}, we have that
$$c|u(y)-u(x)|^{p-1}\leq |\a_p(x,y,u(y)-u(x))|,$$
thus
$$|u(y)|^{p-1}\leq \tilde{C}\left(|\a_p(x,y,u(y)-u(x))|+|u(x)|^{p-1}\right)  $$
for every $x$, $y\in\Omega_m$ and some constant $\tilde{C}$. Therefore, since $m_x(\Omega_m)=1$ for $x\in\Omega$,
$$\int_{\Omega_m}m_x(\Omega)|u(x)|^{p-1}d\nu(x)=\int_{\Omega_m}\int_{\Omega}|u(x)|^{p-1}dm_x(y)d\nu(x)$$
$$=\int_{\Omega}\int_{\Omega_m}|u(y)|^{p-1}dm_x(y)d\nu(x)$$
$$\leq \tilde{C}\left(\int_{Q_2}|\a_p(x,y,u(y)-u(x))|d(\nu\otimes m_x)(x,y)+\int_{\Omega}|u(x)|^{p-1}d\nu(x)\right)<+\infty .$$
This implies, in particular, that
\begin{equation}\label{ind003mmmm} m_{(\cdot)}(\Omega)|u|^{p-1}\in L^1(\partial_m\Omega,\nu).
\end{equation}
\end{proof}

\begin{remark}[\bf Regularity for $p\ge 2$]\label{regul001} In the context of Theorem~\ref{CAandRangeCond}, let us see that, for $p\ge 2$,
\begin{equation}\label{ind001}\a_p(x,y,u(y)-u(x))\in L^{p'}(Q_2,\nu\otimes m_x)
\end{equation}
and
\begin{equation}\label{ind003pmay2} m_{(\cdot)}(\Omega)|u|^{p}\in L^1(\partial_m\Omega,\nu).
\end{equation}
 Indeed, by~\eqref{ind003mmmm},  since $0\le m_{(\cdot)}(\Omega)\le 1$,
\begin{equation}\label{ind003also} m_{(\cdot)}^{p-1}(\Omega)|u|^{p-1}\in L^1(\partial_m\Omega,\nu).
\end{equation}
Therefore,
  $$m_{(\cdot)}(\Omega)u\in L^{p-1}(\partial_m\Omega,\nu)\subset L^{1}(\partial_m\Omega,\nu).$$
  Hence,  we get that
$$u\varphi=m_{(\cdot)}(\Omega)u\frac{\varphi}{m_{(\cdot)}(\Omega)}\in L^{1}(\partial_m\Omega
,\nu)  . $$
Now, multiplying the first equation in~\eqref{ind002} by $T_ku(x)$, integrating over $\Omega$ and then integrating by parts,
$$\int_\Omega uT_kud\nu+\frac12\int_{Q_2}\a_p(x,y,u(y)-u(x))\big(T_ku(y)-T_ku(x)\big)d(\nu\otimes m_x)(x,y)$$ $$=\int_\Omega\phi T_kud\nu+\int_{\partial_m\Omega}\varphi T_kud\nu.$$
Hence, letting $k\to\infty$, by Fatou's lemma,
$$(x,y)\mapsto \a_p(x,y,u(y)-u(x))\big( u(y)- u(x)\big)\in L^1(Q_2,\nu\otimes m_x),$$
and this is equivalent, on account of~\eqref{llo1} and~\eqref{llo2}, to~\eqref{ind001}.   Moreover, in this situation, we can repeat the argument used to obtain~\eqref{ind003mmmm} but using $p$ instead of $p-1$, to get~\eqref{ind003pmay2}.
 Indeed,
$$\int_{\Omega_m}m_x(\Omega)|u(x)|^{p}d\nu(x)=\int_{\Omega_m}\int_{\Omega}|u(x)|^{p}dm_x(y)d\nu(x)$$
$$=\int_{\Omega}\int_{\Omega_m}|u(y)|^{p}dm_x(y)d\nu(x)$$
$$\leq \tilde{C}\left(\int_{Q_2}\a_p(x,y,u(y)-u(x))(u(y)-u(x))d(\nu\otimes m_x)(x,y)+\int_{\Omega}|u(x)|^{p}d\nu(x)\right)<+\infty .$$
\end{remark}

  With respect to the domain of the operator $A^m_{\a_p,\varphi}$, we have the following result.

\begin{theorem}\label{Nremdom01ferrogen} Let $\varphi\in L^{m,\infty}(\partial_m\Omega,\nu)$. Then, we have
 $$L^{\infty}(\Omega,\nu)\subset D(A^m_{\a_p,\varphi})$$
 and, consequently,
  $$\overline{D(A^m_{\a_p,\varphi})}^{L^{p'}(\Omega,\nu)}=L^{p'}(\Omega,\nu).$$
\end{theorem}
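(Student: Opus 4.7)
The plan is to show the inclusion $L^\infty(\Omega,\nu) \subset D(A^m_{\a_p,\varphi})$ directly, and then the density statement will follow immediately because $L^\infty(\Omega,\nu)$ is dense in $L^{p'}(\Omega,\nu)$ (recall that $\nu(\Omega)\le \nu(\Omega_m)<+\infty$ by Assumption~\ref{as2}). All the analytical work for the inclusion has essentially already been done in Remark~\ref{elrem001}, so the proof reduces to a verification.

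Given $u\in L^\infty(\Omega,\nu)$, I will first extend $u$ to $\partial_m\Omega$ so as to satisfy the Dipierro--Ros-Oton--Valdinoci boundary condition. Since $\nu(\Omega)<+\infty$ one has $u\in L^p(\Omega,\nu)$, so Remark~\ref{elrem001}.1 produces a unique $\nu$-measurable function $\overline{u}:\partial_m\Omega\to\mathbb{R}$ with
$$-\int_\Omega \a_p(x,y,u(y)-\overline{u}(x))\,dm_x(y)=\varphi(x),\qquad x\in\partial_m\Omega,$$
and Remark~\ref{elrem001}.3 yields the $L^\infty$ bound
$$\|\overline{u}\|_{L^\infty(\partial_m\Omega,\nu)}\le \|u\|_{L^\infty(\Omega,\nu)}+\frac{1}{c^{1/(p-1)}}\left\|\frac{\varphi}{m_{(\cdot)}(\Omega)}\right\|_{L^\infty(\partial_m\Omega,\nu)}^{1/(p-1)},$$
so the combined function, still denoted $\overline{u}$, lies in $L^\infty(\Omega_m,\nu)$.

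It remains to check the integrability conditions in the definition of $A^m_{\a_p,\varphi}$. Using the growth condition \eqref{llo1} and the boundedness of $\overline{u}$, one has $|\a_p(x,y,\overline{u}(y)-\overline{u}(x))|\le C(1+(2\|\overline{u}\|_\infty)^{p-1})$ for $\nu\otimes m_x$-a.e.\ $(x,y)$, and since $\nu\otimes m_x(Q_2)\le \nu(\Omega_m)<+\infty$ (because each $m_x$ is a probability measure), the map $(x,y)\mapsto \a_p(x,y,\overline{u}(y)-\overline{u}(x))$ lies in $L^\infty(Q_2,\nu\otimes m_x)\subset L^1(Q_2,\nu\otimes m_x)$. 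Also, $m_{(\cdot)}(\Omega)|\overline{u}|^{p-1}\le \|\overline{u}\|_\infty^{p-1}$ belongs to $L^1(\partial_m\Omega,\nu)$ since $\nu(\partial_m\Omega)<+\infty$. Finally, setting
$$v(x):=-\int_{\Omega_m}\a_p(x,y,\overline{u}(y)-\overline{u}(x))\,dm_x(y),\qquad x\in\Omega,$$
I obtain $v\in L^\infty(\Omega,\nu)\subset L^1(\Omega,\nu)$, and hence $(u,v)\in A^m_{\a_p,\varphi}$, proving $u\in D(A^m_{\a_p,\varphi})$.

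Taking the $L^{p'}(\Omega,\nu)$-closure and using that $L^\infty(\Omega,\nu)$ is dense in $L^{p'}(\Omega,\nu)$ gives the second statement. The only conceptually nontrivial step is constructing and uniformly bounding the boundary extension $\overline{u}$, but this was the content of Remark~\ref{elrem001}; the verifications above are then purely routine because $\overline{u}$ is bounded and $\nu(\Omega_m)<+\infty$.
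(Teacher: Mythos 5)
Your proof is correct and follows essentially the same route as the paper's: extend $u$ to $\partial_m\Omega$ via Remark~\ref{elrem001}.{\it 1}~\&~{\it 3}, observe that the extension is bounded, and conclude that the associated $\phi$ lies in $L^1(\Omega,\nu)$ so that $u\in D(A^m_{\a_p,\varphi})$. The only difference is that you spell out the routine integrability checks ($\a_p(x,y,u(y)-u(x))\in L^1(Q_2,\nu\otimes m_x)$ and $m_{(\cdot)}(\Omega)|u|^{p-1}\in L^1(\partial_m\Omega,\nu)$) that the paper leaves implicit.
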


\begin{proof}
Take $u\in L^\infty(\Omega,\nu)$.  By Remark~\ref{elrem001}.{\it 1}~\&~{\it 3}, there exists an extension of $u$ to $\partial_m\Omega$ (which we continue to denote by $u$) satisfying
$$
-\int_\Omega \a_p(x,y,u(y)-u(x))dm_x(y)=\varphi(x), \quad x\in\partial_m\Omega,
$$
and, moreover,
  $$ u\in L^\infty(\partial_m\Omega,\nu).$$
   Therefore, for $x\in\Omega$,
 $$\phi(x)=-\int_{\Omega_m}\a_p(x,y,u(y)-u(x))dm_x(y)$$
 defines a function in $L^1(\Omega,\nu)$, and we have that
 $$u\in D(A^m_{\a_p,\varphi}).$$
\end{proof}

\begin{theorem}\label{forlin001gen1}   Let $p\ge 2$ and assume that   $\varphi\in L^{m,\infty}(\partial_m\Omega,\nu)$. Then
  $$L^{p-1}(\Omega,\nu)\subset D(A^m_{\a_p,\varphi}).$$

\end{theorem}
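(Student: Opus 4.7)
The plan is to directly construct the extension $\overline{u}$ of $u$ to $\partial_m\Omega$ from the Neumann equation, and then check each requirement in the definition of $A^m_{\a_p,\varphi}$. By reversibility and Fubini,
$$\int_{\partial_m\Omega}\int_{\Omega}|u(y)|^{p-1}dm_x(y)\,d\nu(x)=\int_{\Omega}|u(y)|^{p-1}m_y(\partial_m\Omega)\,d\nu(y)\leq \|u\|_{L^{p-1}(\Omega,\nu)}^{p-1},$$
so $u\in L^{p-1}(\Omega,m_x)$ for $\nu$-a.e.\ $x\in\partial_m\Omega$. For every such $x$, \eqref{llo1} makes $\Psi(r):=-\int_\Omega \a_p(x,y,u(y)-r)\,dm_x(y)$ finite-valued and, by \eqref{llo3} and dominated convergence, continuous and strictly increasing; using $|\a_p(x,y,s)|\geq c|s|^{p-1}$ (a consequence of \eqref{llo2}) together with $m_x(\Omega)>0$, one further checks that $\Psi(\pm\infty)=\pm\infty$. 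Hence there is a unique $\overline{u}(x)$ with $\Psi(\overline{u}(x))=\varphi(x)$, and $\overline{u}$ is $\nu$-measurable exactly as in Remark~\ref{elrem001}.{\it 1}.

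The core of the proof is the pointwise bound
$$|\overline{u}(x)|^{p-1}m_x(\Omega)\leq C\Big(|\varphi(x)|+m_x(\Omega)+\int_\Omega|u(y)|^{p-1}dm_x(y)\Big)\qquad(\star)$$
for $\nu$-a.e.\ $x\in\partial_m\Omega$. I plan to prove $(\star)$ by a Chebyshev splitting argument. Fix a small $\delta>0$, set $R(x)^{p-1}:=\frac{1}{\delta\,m_x(\Omega)}\int_\Omega|u(y)|^{p-1}dm_x(y)$ and $F(x):=\{y\in\Omega:|u(y)|\leq R(x)\}$, so that $m_x(F(x))\geq(1-\delta)m_x(\Omega)$. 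Suppose $|\overline{u}(x)|\geq 2R(x)$; on $F(x)$, $u(y)-\overline{u}(x)$ has the opposite sign of $\overline{u}(x)$ and modulus at least $|\overline{u}(x)|/2$, so \eqref{llo2} yields a contribution to $-\int_\Omega\a_p(x,y,u(y)-\overline{u}(x))\,dm_x(y)$ of size at least $(c/2^{p-1})(1-\delta)|\overline{u}(x)|^{p-1}m_x(\Omega)$ with the correct sign. On $\Omega\setminus F(x)$, \eqref{llo1} together with the elementary inequality $|u(y)-\overline{u}(x)|^{p-1}\leq 2^{p-2}(|u(y)|^{p-1}+|\overline{u}(x)|^{p-1})$ (valid since $p\geq 2$) produces only $C\delta|\overline{u}(x)|^{p-1}m_x(\Omega)$ together with terms already present on the right of $(\star)$. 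Choosing $\delta$ small enough to absorb the $\delta|\overline{u}(x)|^{p-1}m_x(\Omega)$ error gives $(\star)$ in this regime; in the complementary regime $|\overline{u}(x)|<2R(x)$, $(\star)$ is immediate from the definition of $R(x)$.

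Integrating $(\star)$ over $\partial_m\Omega$ and using $|\varphi(x)|\leq\|\varphi/m_{(\cdot)}(\Omega)\|_{L^\infty(\partial_m\Omega,\nu)}\,m_x(\Omega)$, Assumption~\ref{as2}, and the same reversibility identity as above, I obtain $m_{(\cdot)}(\Omega)|\overline{u}|^{p-1}\in L^1(\partial_m\Omega,\nu)$. Once this is known, the remaining requirements are routine: the bound $|\a_p(x,y,\overline{u}(y)-\overline{u}(x))|\leq\tilde{C}(1+|\overline{u}(x)|^{p-1}+|\overline{u}(y)|^{p-1})$ combined with the splitting $Q_2=(\Omega\times\Omega_m)\cup(\partial_m\Omega\times\Omega)$ (up to $\nu\otimes m_x$-null sets) and reversibility reduce each resulting double integral to $\nu(\Omega_m)$, $\|u\|_{L^{p-1}(\Omega,\nu)}^{p-1}$, or $\|m_{(\cdot)}(\Omega)|\overline{u}|^{p-1}\|_{L^1(\partial_m\Omega,\nu)}$, all finite. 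In particular $v(x):=-\int_{\Omega_m}\a_p(x,y,\overline{u}(y)-\overline{u}(x))\,dm_x(y)$ lies in $L^1(\Omega,\nu)$ and $(u,v)\in A^m_{\a_p,\varphi}$. The main obstacle is $(\star)$; everything else amounts to careful bookkeeping with reversibility.
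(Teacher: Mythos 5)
Your proof is correct, but it follows a genuinely different route from the paper's. The paper never proves a pointwise bound on $|\overline{u}(x)|$; instead it manipulates the boundary equation into the identity \eqref{ide001} by splitting over $\{|u(y)-u(x)|>1\}$ versus $\{|u(y)-u(x)|\le 1\}$ and dividing by $u(y)-u(x)$, then shows that the global quantity $\mathbf{\Theta}=\int_{\partial_m\Omega}\int_\Omega|u(y)-u(x)|^{p-1}dm_x(y)\,d\nu(x)$ is finite via H\"{o}lder with exponents $\frac{p-1}{p-2}$ and $p-1$ and the self-improving inequality $\mathbf{\Theta}\le A+B\,\mathbf{\Theta}^{\frac{p-2}{p-1}}$, from which the boundary regularity \eqref{nolv01gene} and the $L^1$ membership of $\mathrm{div}_m\a_p u$ are deduced by further H\"{o}lder and Tonelli arguments. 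Your Chebyshev splitting yields the pointwise estimate $(\star)$ directly (and I checked the details: the choice of $\delta$ depending only on $c$, $C$, $p$ makes the absorption work, the degenerate case $R(x)=0$ is harmless, and $p\ge 2$ is exactly what justifies $|a+b|^{p-1}\le 2^{p-2}(|a|^{p-1}+|b|^{p-1})$); integrating $(\star)$ gives the boundary estimate at once, and your reduction of the $L^1(Q_2)$ and $L^1(\Omega,\nu)$ requirements to \eqref{llo1}, reversibility, and the symmetry of $Q_2$ is sound. Two things your route buys: first, it avoids the delicate point in the paper's argument that $\mathbf{\Theta}\le A+B\,\mathbf{\Theta}^{\frac{p-2}{p-1}}$ only forces finiteness of $\mathbf{\Theta}$ if one already knows $\mathbf{\Theta}<+\infty$ (for $p>2$ the inequality is vacuous when $\mathbf{\Theta}=+\infty$, so strictly speaking a truncation step is needed there); second, it replaces the repeated H\"{o}lder estimates by a single pointwise inequality plus bookkeeping. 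What you lose is the intermediate quantity $\mathbf{\Theta}$ itself, i.e.\ the finiteness of $\int_{\partial_m\Omega}\int_\Omega|u(y)-u(x)|^{p-1}$, which the paper records explicitly (it is an easy consequence of your estimates anyway). One presentational remark: your assertion that $\Psi(\pm\infty)=\pm\infty$ deserves a line of justification — it is the same Chebyshev argument as $(\star)$, since for large $|r|$ a positive $m_x$-fraction of $\Omega$ has $|u(y)-r|\ge|r|/2$ with the right sign — but this is a minor gap of exposition, not of substance.
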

\begin{proof}
Suppose that $p>2$ ( the case $p=2$ follows by a similar, but simpler, argument).

Given $u\in L^{p-1}(\Omega,\nu)$,  denote again by $u$ the unique extension of $u$ to the boundary $\partial_m\Omega$ satisfying
\begin{equation}\label{rees001gen1}
-\int_{\Omega}\a_p(x,y,u(y)-u(x))dm_x(y)=\varphi(x), \ x\in\partial_m\Omega.
\end{equation}
Then, for $x \in \partial_m \Omega$, we have
\begin{equation}\label{ide001}
\begin{array}{l}
\displaystyle u(x)\int_{\{y\in\Omega:|u(y)-u(x)|>1\}}
\frac{\a_p(x,y,u(y)-u(x))}{u(y)-u(x)}dm_x(y)
\\ \\
\displaystyle
=\varphi(x)+\int_{\{y\in\Omega:|u(y)-u(x)|\le 1\}}
 \a_p(x,y,u(y)-u(x))dm_x(y)
 \\ \\
\displaystyle
\quad +\int_{\{y\in\Omega:|u(y)-u(x)|>1\}}\frac{\a_p(x,y,u(y)-u(x))}{u(y)-u(x)}u(y)dm_x(y),
\end{array}
\end{equation}
and, consequently, by~\eqref{llo1} and taking into account ~\eqref{llo2},
\begin{equation}\label{ide001modu}
\begin{array}{l}
\displaystyle |u(x)|\int_{\{y\in\Omega:|u(y)-u(x)|>1\}}
\frac{ \a_p(x,y,u(y)-u(x)) }{ u(y)-u(x) }dm_x(y)
\\ \\
\displaystyle
\le |\varphi(x)|+2C
+2C\int_{\{y\in\Omega:|u(y)-u(x)|>1\}}
|u(y)-u(x)|^{p-2}|u(y)|dm_x(y) .
\end{array}
\end{equation}
Now, by~\eqref{llo2},
$$c\int_{\{y\in\Omega:|u(y)-u(x)|>1\}}|u(y)-u(x)|^{p-2}dm_x(y)\qquad\qquad$$ $$\qquad\qquad\le
\int_{\{y\in\Omega:|u(y)-u(x)|>1\}}\frac{\a_p(x,y,u(y)-u(x))}{ u(y)-u(x)}dm_x(y).$$
Hence, by~\eqref{ide001modu}, we get
\begin{equation}\label{bieen}\begin{array}{c}
\displaystyle
c|u(x)|\int_{\{y\in\Omega:|u(y)-u(x)|>1\}}
|u(y)-u(x)|^{p-2}dm_x(y)
\qquad\qquad\qquad
\\ \\ \displaystyle
\le|\varphi(x)|+2C
+2C\int_{\{y\in\Omega:|u(y)-u(x)|>1\}}|u(y)-u(x)|^{p-2}|u(y)|dm_x(y).
\end{array}
\end{equation}

Let us now see that
\begin{equation}\label{domh002gene}
\mathbf{\Theta}:= \int_{\partial_m\Omega}\int_\Omega|u(y)-u(x)|^{p-1}dm_x(y)d\nu(x)<+\infty.
\end{equation} By~\eqref{bieen}   and the reversibility of  $\nu$, we have
$$
\begin{array}{l}
\displaystyle
 \int_{\partial_m\Omega}\int_\Omega|u(y)-u(x)|^{p-1}dm_x(y)d\nu(x)
\\ \\
\displaystyle
= \int_{\partial_m\Omega}\int_{\{y\in\Omega:|u(y)-u(x)|\le1\}} |u(y)-u(x)|^{p-1}dm_x(y)d\nu(x)
\\ \\
\displaystyle
\qquad+ \int_{\partial_m\Omega}\int_{\{y\in\Omega:|u(y)-u(x)|>1\}}|u(y)-u(x)|^{p-1}dm_x(y)d\nu(x)
\\ \\
\displaystyle\le  \nu(\Omega_m)+ \int_{\partial_m\Omega}\int_{\{y\in\Omega:|u(y)-u(x)|>1\}}|u(y)-u(x)|^{p-2}|u(y)|dm_x(y)d\nu(x)
\\ \\
\displaystyle
\qquad+
\int_{\partial_m\Omega}|u(x)|\int_{\{y\in\Omega:|u(y)-u(x)|>1\}}|u(y)-u(x)|^{p-2} dm_x(y)\nu(x)\\
\\
\displaystyle
 \le \left(1+2\frac{C}{c}\right)\nu(\Omega_m)+ \frac{1}{c}\int_{\partial_m\Omega}|\varphi(x)|d\nu(x)
 \\
\\
\displaystyle
\qquad + \left(1+2\frac{C}{c}\right)\int_{\partial_m\Omega}\int_\Omega|u(y)-u(x)|^{p-2}|u(y)|dm_x(y)d\nu(x) .
 \end{array}
$$
Now, by using H\"{o}lder's inequality, with exponents $\frac{p-1}{p-2}$ and $p-1$, and the reversibility of  $\nu$, we get
$$
\begin{array}{l}
\displaystyle
 \int_{\partial_m\Omega}\int_\Omega|u(y)-u(x)|^{p-2}|u(y)|dm_x(y)d\nu(x)
\\ \\
\displaystyle \le
\left(\int_{\partial_m\Omega}\int_\Omega|u(y)-u(x)|^{p-1} dm_x(y)d\nu(x)\right)^{\frac{p-2}{p-1}}
\left(\int_{\partial_m\Omega}\int_\Omega|u(y)|^{p-1} dm_x(y)d\nu(x)\right)^{\frac{1}{p-1}}
\\ \\
\displaystyle \le
\left(\int_{\partial_m\Omega}\int_\Omega|u(y)-u(x)|^{p-1} dm_x(y)d\nu(x)\right)^{\frac{p-2}{p-1}}
\left( \int_\Omega|u(x)|^{p-1} d\nu(x)\right)^{\frac{1}{p-1}}.
\end{array}
$$
Therefore,
$$\mathbf{\Theta}\le \left(1+2\frac{C}{c}\right)\nu(\Omega_m)+ \frac{1}{c}\int_{\partial_m\Omega}|\varphi(x)|d\nu(x)+
 \left(1+2\frac{C}{c}\right)\mathbf{\Theta}^{\frac{p-2}{p-1}}\left( \int_\Omega|u(x)|^{p-1} d\nu(x)\right)^{\frac{1}{p-1}}$$
and, consequently,
$\mathbf{\Theta}$ is finite.
Observe that an explicit upper bound, depending on $\Vert \varphi \Vert_{L^{1}(\Omega, \nu)}$ and $\Vert u\Vert_{L^{p-1}(\Omega, \nu)}$, can be stated.

\medskip

Furthermore, we obtain the following regularity of $u$ on the boundary:
\begin{equation}\label{nolv01gene}
\int_{\partial_m\Omega}m_x(\Omega)|u(x)|^{p-1}d\nu(x)<+\infty.
\end{equation}
Indeed, since
$|u(y)|^{p-1}\leq \tilde{C}\left(|u(y)-u(x)|^{p-1}+|u(x)|^{p-1}\right)  $
for some constant $\tilde{C}$ and every $x$, $y\in\Omega_m$, we have that
$$\int_{\Omega_m}m_x(\Omega)|u(x)|^{p-1}d\nu(x)=\int_{\Omega_m}\int_{\Omega}|u(x)|^{p-1}dm_x(y)d\nu(x)$$
$$=\int_{\Omega}\int_{\Omega_m}|u(y)|^{p-1}dm_x(y)d\nu(x)$$
$$\leq \tilde{C}\left(\int_{Q_2}|u(y)-u(x)|^{p-1}d(\nu\otimes m_x)(x,y)+\int_{\Omega}|u(x)|^{p-1}d\nu(x)\right),$$
thus \eqref{nolv01gene} holds.

\medskip
Let us finally see that, for $x\in\Omega$,
 $$\phi(x):=-\int_{\Omega_m}\a_p(x,y,u(y)-u(x))dm_x(y)$$
belongs to $L^1(\Omega,\nu)$. Indeed,
 $$\begin{array}{c}
 \displaystyle-\int_{\Omega_m}\a_p(x,y,u(y)-u(x))dm_x(y)
 \\ \\ \displaystyle
 =-\int_{\Omega}\a_p(x,y,u(y)-u(x))dm_x(y)
 -\int_{\partial_m\Omega}\a_p(x,y,u(y)-u(x))dm_x(y).
 \end{array}
 $$
 Now, the first summand on the right hand side belongs to $L^1(\Omega,\nu)$. Let us see that the second one also belongs to $L^1(\Omega,\nu)$. Since
$$
\begin{array}{l}\displaystyle
\int_\Omega\left|\int_{\partial_m\Omega}\a_p(x,y,u(y)-u(x))dm_x(y)\right|d\nu(x)
\\ \\ \displaystyle
\le \int_\Omega\int_{\partial_m\Omega}|\a_p(x,y,u(y)-u(x))|dm_x(y)d\nu(x)
\\ \\ \displaystyle
\le \int_\Omega\int_{\partial_m\Omega}C(1+|u(y)-u(x)|^{p-1})dm_x(y) d\nu(x)
\\ \\ \displaystyle
\le C\nu(\Omega_m)+ C\int_\Omega\int_{\partial_m\Omega} |u(y)-u(x)|^{p-2}|u(y)|dm_x(y) d\nu(x)
\\ \\ \displaystyle
\quad+  C\int_\Omega\int_{\partial_m\Omega} |u(y)-u(x)|^{p-2} dm_x(y) |u(x)|d\nu(x),
\end{array}
$$
 we have that,   by Tonelli-Hobson's theorem, $\displaystyle x\mapsto -\int_{\partial_m\Omega}\a_p(x,y,u(y)-u(x))dm_x(y)$ belongs to $L^1(\Omega,\nu)$
  if the following functions belong to $L^1(\Omega,\nu)$:
 $$x\longmapsto \int_{\partial_m\Omega}|u(y)-u(x)|^{p-2}u(y)dm_x(y)$$ and
 $$x\longmapsto\int_{\partial_m\Omega}|u(y)-u(x)|^{p-2} dm_x(y)u(x).$$
With regard to the first function, by H\"{o}lder's inequality and the reversibility of  $\nu$ with respect to $m$, we have that
$$\begin{array}{l}
\displaystyle
\int_\Omega\left|\int_{\partial_m\Omega}|u(y)-u(x)|^{p-2}u(y)dm_x(y)\right|d\nu(x)
\\ \\
\displaystyle
\le \int_\Omega \int_{\partial_m\Omega}|u(y)-u(x)|^{p-2}|u(y)|dm_x(y) d\nu(x)
\\ \\
\displaystyle
\le
\left(\int_\Omega\int_{\partial_m\Omega}|u(y)-u(x)|^{p-1} dm_x(y)d\nu(x)\right)^{\frac{p-2}{p-1}}
\left(\int_\Omega\int_{\partial_m\Omega}|u(y)|^{p-1} dm_x(y)d\nu(x)\right)^{\frac{1}{p-1}}
\\ \\
\displaystyle
=
\left(\int_{\partial_m\Omega}\int_\Omega|u(y)-u(x)|^{p-1} dm_x(y)d\nu(x)\right)^{\frac{p-2}{p-1}}
\left(\int_{\partial_m\Omega}\int_\Omega|u(x)|^{p-1} dm_x(y)d\nu(x)\right)^{\frac{1}{p-1}}
\\ \\
\displaystyle
=
\left(\int_{\partial_m\Omega}\int_\Omega|u(y)-u(x)|^{p-1} dm_x(y)d\nu(x)\right)^{\frac{p-2}{p-1}}
\left(\int_{\partial_m\Omega}m_x(\Omega)|u(x)|^{p-1} d\nu(x)\right)^{\frac{1}{p-1}}
\end{array}
$$
 which  is finite by~\eqref{domh002gene} and~\eqref{nolv01gene}. The second one also belongs to $L^1(\Omega,\nu)$ since,  by~\eqref{domh002gene} (using the reversibility of $\nu$ with respect to $m$),
 $\displaystyle x\mapsto \int_{\partial_m\Omega}|u(y)-u(x)|^{p-2} dm_x(y)\in L^{(p-1)'}(\Omega,\nu),$ and  $u\in L^{p-1}(\Omega,\nu).$
\end{proof}

The following theorem is a consequence of the above results thanks to Theorem~\ref{teointronls}.

\begin{theorem}\label{nsth01N} Let $\varphi\in L^{m,\infty}(\partial_m\Omega,\nu)$ and  $T>0$. For any
$u_0\in \overline{D(A^m_{\a_p,\varphi})}^{L^{p'}(\Omega,\nu)}=L^{p'}(\Omega,\nu)$  there exists a unique mild-solution $ u(t,x)$ of  Problem~\eqref{NNeumann4}.
 Moreover, for any $ q\geq p'$ and  $u_{0i}\in L^q(\Omega,\nu)$,  $i=1,2$, we have the following contraction principle for the corresponding mild-solutions $u_i$:
$$  \Vert (u_1(t,.)-u_2(t,.))^+\Vert_{L^q(\Omega,\nu)}\le \Vert (u_{0,1}-u_{0,2})^+\Vert_{L^q(\Omega,\nu)}
 \quad \hbox{for any \  $0\le t< T$.}$$

 If $u_0\in D(A^m_{\a_p,\varphi})$, then the mild-solution is a strong solution.  In particular, if $u_0\in L^{\infty}(\Omega,\nu)$,   Problem~\eqref{NNeumann4} has a unique strong solution. For $p\ge 2$ this is true for data in $L^{p-1}(\Omega,\nu)$.
\end{theorem}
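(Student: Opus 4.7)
My plan is to assemble the theorem directly from the three preceding results about the operator $A^m_{\a_p,\varphi}$, and then invoke the abstract semigroup theorem \ref{teointronls}. First I would observe that Assumption \ref{as2} gives $\nu(\Omega)<+\infty$, so $L^{p'}(\Omega,\nu)$ is a normal Banach subspace of $L^1(\Omega,\nu)$. Theorem \ref{CAandRangeCond} then tells me that $A^m_{\a_p,\varphi}\subset L^1(\Omega,\nu)\times L^1(\Omega,\nu)$ is completely accretive and satisfies $L^{p'}(\Omega,\nu)\subset R(I+A^m_{\a_p,\varphi})$; regarded as an operator in $L^{p'}(\Omega,\nu)$ this means it is m-completely accretive there. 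Combined with Theorem \ref{Nremdom01ferrogen}, which yields $\overline{D(A^m_{\a_p,\varphi})}^{L^{p'}(\Omega,\nu)}=L^{p'}(\Omega,\nu)$, all hypotheses of Theorem \ref{teointronls} with $E=L^{p'}(\Omega,\nu)$ are met.

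I would then quote Theorem \ref{teointronls} to produce, for every $u_0\in L^{p'}(\Omega,\nu)$, a unique mild solution of the abstract Cauchy problem
\begin{equation*}
\frac{du}{dt}+A^m_{\a_p,\varphi}u\ni 0,\qquad u(0)=u_0,
\end{equation*}
and to identify this mild solution as the mild solution of Problem \eqref{NNeumann4}, since the definition of $A^m_{\a_p,\varphi}$ encodes both the equation in $\Omega$ and the Dipierro--Ros-Oton--Valdinoci Neumann condition on $\partial_m\Omega$. The same theorem immediately upgrades the mild solution to a strong solution whenever $u_0\in D(A^m_{\a_p,\varphi})$. The last two assertions of the theorem then follow from the inclusions $L^{\infty}(\Omega,\nu)\subset D(A^m_{\a_p,\varphi})$ (Theorem \ref{Nremdom01ferrogen}) and, for $p\ge 2$, $L^{p-1}(\Omega,\nu)\subset D(A^m_{\a_p,\varphi})$ (Theorem \ref{forlin001gen1}).

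The only point requiring a brief argument, and the place where I expect the most bookkeeping, is the $L^q$ contraction for arbitrary $q\ge p'$. The statement in Theorem \ref{teointronls} already gives the $L^q$ positive-part contraction for any $1\le q\le\infty$, but one has to justify that it applies to the present semigroup acting on initial data in $L^q\cap L^{p'}$. This is a direct consequence of the complete accretivity of $A^m_{\a_p,\varphi}$: the resolvent inequality $u_1-u_2\ll u_1-u_2+\lambda(v_1-v_2)$ specialised to $j(r)=((r)^+)^q$ yields $\|(J_\lambda u_{0,1}-J_\lambda u_{0,2})^+\|_{L^q(\Omega,\nu)}\le\|(u_{0,1}-u_{0,2})^+\|_{L^q(\Omega,\nu)}$ for every $\lambda>0$, and passing to the limit through the Crandall--Liggett exponential formula transfers the estimate to the mild solutions $u_i(t,\cdot)$ for all $0\le t<T$. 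Uniqueness is built into the abstract mild-solution framework, so no separate argument is needed.
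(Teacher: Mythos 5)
Your proposal is correct and follows exactly the route the paper takes: the paper states Theorem~\ref{nsth01N} as an immediate consequence of Theorems~\ref{CAandRangeCond}, \ref{Nremdom01ferrogen} and \ref{forlin001gen1} combined with the abstract semigroup result of Theorem~\ref{teointronls}, which is precisely your assembly. Your extra remarks on the $L^q$ contraction via complete accretivity and the Crandall--Liggett formula only make explicit what the paper leaves implicit in Theorem~\ref{teointronls}.
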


\section{ Particular cases}\label{PCases}
  This section deals with the case that $\a_p$ is positive homogeneous and with two important examples of  metric random walk spaces, for which, applying the above general results, we get   existence and uniqueness of strong solutions.

\subsection{The homogeneous   Neumann  boundary value problem}

  \begin{definition}{\rm   We will say that $\a_p$ is {\it positive homogeneous} if
$$ \a_p(x,y, \lambda r) = \lambda^{p-1} \a_p(x,y, r) \ \hbox{ for every }   \lambda>0, \  x,y \in X \hbox{ and }  r \in \R.$$
}
\end{definition}
For example, if $$\a_p(x,y,r)=\frac{\varphi(x)+\varphi(y)}{2}|r|^{p-2}r,$$
 where $\varphi:X\rightarrow \R$ is a bounded $\nu$-measurable function, then $\a_p$ is positive
homogeneous.

It follows that, if $\a_p$ is  positive homogeneous  then the operator  $B^m_{\a_p,0}$ is
 positive homogeneous of degree $p-1$, that is, $B^m_{\a_p,0}(\lambda u) = \lambda^{p-1} B^m_{\a_p,0}( u)$
 for every $u \in D(B^m_{\a_p,0})$ and $\lambda>0$. Then, since $B^m_{\a_p,0}$ is an $m$-completely accretive operator, we have that,
 by the results in \cite{BCr2} (see Theorem \ref{teointronls}), the mild solutions  of Problem~\eqref{Neumann4} are, in fact, strong solutions  if $p-1 \not= 1$.
Consequently,  under the Assumptions in Section~\ref{aus23}, we have the following result.

\begin{theorem}\label{nsth01S} Let $ p\neq 2$ and assume that $\a_p$ is  positive homogeneous.
  For any
$u_0\in \overline{D(B^m_{\a_p,0})}^{L^{p'}(\Omega,\nu)}=L^{p'}(\Omega,\nu)$  there exists a unique strong solution $ u(t,x)$ of Problem~\eqref{Neumann4} with $\varphi =0$.
 Moreover, for any $ q\geq p'$ and $u_{0i}\in L^q(\Omega,\nu)$,  $i=1,2$, we have the following contraction principle for the corresponding strong solutions $u_i$:
$$  \Vert (u_1(t,.)-u_2(t,.))^+\Vert_{L^q(\Omega,\nu)}\le \Vert (u_{0,1}-u_{0,2})^+\Vert_{L^q(\Omega,\nu)}
 \quad \hbox{for any \  $0\le t< T$.}$$
\end{theorem}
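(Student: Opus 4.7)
The plan is to invoke Theorem~\ref{teointronls} applied to the operator $B^m_{\a_p,0}$, using the positive homogeneity to upgrade from mild to strong solutions. Theorem~\ref{teo27} already gives that $B^m_{\a_p,0}$ is $m$-completely accretive in the normal Banach space $L^{p'}(\Omega,\nu)$, and Theorem~\ref{remdom01} gives the density $\overline{D(B^m_{\a_p,0})}^{L^{p'}(\Omega,\nu)}=L^{p'}(\Omega,\nu)$. So the existence of a unique mild solution and the $L^q$-contraction for $q\ge p'$ follow immediately from the first two parts of Theorem~\ref{teointronls}.

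The one new ingredient to check is that $B^m_{\a_p,0}$ is positively homogeneous of degree $p-1$. I would start from the definition: if $(u,v)\in B^m_{\a_p,0}$, then
\[
v(x)=-\int_{\Omega_m}\a_p(x,y,u(y)-u(x))\,dm_x(y),\qquad x\in\Omega,
\]
and $0=-\int_{\Omega_m}\a_p(x,y,u(y)-u(x))\,dm_x(y)$ on $\partial_m\Omega$. For $\lambda>0$, replacing $u$ by $\lambda u$ and using $\a_p(x,y,\lambda r)=\lambda^{p-1}\a_p(x,y,r)$ inside both integrals shows that $(\lambda u,\lambda^{p-1}v)\in B^m_{\a_p,0}$, i.e.\ $B^m_{\a_p,0}(\lambda u)=\lambda^{p-1}B^m_{\a_p,0}(u)$. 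Since $p\neq 2$ we have $p-1\neq 1$, so the last part of Theorem~\ref{teointronls} applies and every mild solution starting from $u_0\in\overline{D(B^m_{\a_p,0})}^{L^{p'}(\Omega,\nu)}$ is in fact a strong solution.

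The remaining assertion, namely that $\overline{D(B^m_{\a_p,0})}^{L^{p'}(\Omega,\nu)}=L^{p'}(\Omega,\nu)$, is exactly Theorem~\ref{remdom01} specialized to $\varphi=0$, and the contraction principle
\[
\|(u_1(t,\cdot)-u_2(t,\cdot))^+\|_{L^q(\Omega,\nu)}\le \|(u_{0,1}-u_{0,2})^+\|_{L^q(\Omega,\nu)}
\]
transfers verbatim from the abstract semigroup statement in Theorem~\ref{teointronls}. I do not expect any real obstacle: the only step that requires writing is the one-line verification of positive homogeneity of the operator, everything else is a direct quotation of results already established in Section~\ref{aus23}.
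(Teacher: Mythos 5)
Your proposal is correct and follows essentially the same route as the paper: the paper also deduces the theorem by noting that positive homogeneity of $\a_p$ makes $B^m_{\a_p,0}$ positively homogeneous of degree $p-1\neq 1$, and then combines Theorem~\ref{teo27}, Theorem~\ref{remdom01} and the last assertion of Theorem~\ref{teointronls} to upgrade the mild solutions to strong ones. Your explicit one-line verification of the homogeneity of the operator is exactly the step the paper leaves implicit.
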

Similarly, under the Assumptions in Section~\ref{aus24}, we can state the corresponding result for Problem~\eqref{NNeumann4}.

\medskip
Consider $p=2$, for which   the last statement in Theorem \ref{teointronls}  does not apply.

 \begin{lemma}\label{Pcontr1} Let $u_i\in L^2(\Omega,\nu)$, $i=1,2$, and assume that  there exists  $ \overline u_i\in L^2(\Omega_m,\nu)$  with $\overline{u_i}_{\vert \Omega} = u_i$ (that we denote equally as $u_i$) satisfying
\begin{equation}\label{ben001}
0= -\int_{\Omega_m} (u_i(y)-u_i(x)) dm_x(y), \quad x \in \partial_m\Omega.
\end{equation}
 Then,
 \begin{equation}\label{EPcontr1}\begin{array}{l}
 \displaystyle\int_{\partial_m \Omega} m_x(\Omega) (u_1(x) - u_2(x))^2 d \nu(x) \\ \\ \quad +  \displaystyle\int_{\partial_m \Omega\times\partial_m \Omega}  \left((u_1(y) - u_2(y)) -(u_1(x)- u_2(x)) \right)^2 dm_x(y) d\nu(x) \\ \\ \leq  \displaystyle\int_\Omega (u_1(x) - u_2(x)) d \nu(x).\end{array}
 \end{equation}
 \end{lemma}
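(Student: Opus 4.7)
I will interpret the right-hand side of \eqref{EPcontr1} as $\int_\Omega (u_1-u_2)^2\,d\nu$ (the written expression appears to be missing a square). Set $w:=u_1-u_2 \in L^2(\Omega_m,\nu)$; subtracting the boundary conditions \eqref{ben001} for $u_1$ and $u_2$ and using linearity gives the mean-value identity
\[
\int_{\Omega_m} w(y)\,dm_x(y) \;=\; w(x)\,m_x(\Omega_m), \qquad x\in\partial_m\Omega.
\]
The plan is to start from the full boundary self-energy $I:=\int_{\partial_m\Omega\times\Omega_m}(w(y)-w(x))^2\,d(\nu\otimes m_x)$, expand the square and use the above identity to eliminate the cross term, then rewrite the resulting quadratic integral via the reversibility of $\nu$. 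The stated estimate will drop out after discarding one nonnegative contribution and using the trivial bound $m_y(\partial_m\Omega)\le 1$.

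Concretely, expanding $(w(y)-w(x))^2 = w(x)^2 - 2w(x)w(y) + w(y)^2$ and using the mean-value identity to evaluate the mixed term yields
\[
I \;=\; -\int_{\partial_m\Omega} w(x)^2 \,m_x(\Omega_m)\,d\nu(x) \;+\; \int_{\partial_m\Omega}\!\int_{\Omega_m} w(y)^2\,dm_x(y)\,d\nu(x).
\]
The reversibility of $\nu$ converts the last term into $\int_{\Omega_m} w(y)^2 m_y(\partial_m\Omega)\,d\nu(y)$. Now I split both $\Omega_m = \Omega \cup \partial_m\Omega$ in the integration regions and $m_x(\Omega_m) = m_x(\Omega) + m_x(\partial_m\Omega)$ in the mass term; the two self-referential contributions $\int_{\partial_m\Omega} w^2\, m_{(\cdot)}(\partial_m\Omega)\,d\nu$ that appear on both sides cancel exactly, producing the sharp identity
\[
I \;+\; \int_{\partial_m\Omega} w(x)^2 \,m_x(\Omega)\,d\nu(x) \;=\; \int_{\Omega} w(y)^2 \,m_y(\partial_m\Omega)\,d\nu(y).
\]

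To conclude, I discard the nonnegative summand $\int_{\partial_m\Omega\times\Omega}(w(y)-w(x))^2\,d(\nu\otimes m_x)$ from $I$ on the left, which restricts the remaining double integral to $\partial_m\Omega\times\partial_m\Omega$, matching \eqref{EPcontr1}; and I use $m_y(\partial_m\Omega)\le m_y(X)=1$ on the right to bound it by $\int_\Omega w^2\,d\nu$. The only delicate step is the bookkeeping of the partition $\Omega_m=\Omega\cup\partial_m\Omega$ together with the reversibility swap: the exact cancellation of the boundary-boundary self-mass contribution is what keeps the right-hand side free of any term supported on $\partial_m\Omega$ and makes the identity sharp.
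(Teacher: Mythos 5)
Your proof is correct, and you are right that the right-hand side of \eqref{EPcontr1} is missing a square: the paper's own argument (and the later use of the lemma in Lemma~\ref{LSCC}) confirms the intended bound is $\int_\Omega(u_1-u_2)^2\,d\nu$. Your route differs from the paper's in a worthwhile way. The paper first rewrites \eqref{ben001} as $m_x(\Omega)u_i(x)-\int_{\partial_m\Omega}(u_i(y)-u_i(x))\,dm_x(y)=\int_\Omega u_i(y)\,dm_x(y)$, multiplies the difference by $w(x)=u_1(x)-u_2(x)$, integrates over $\partial_m\Omega$, converts the boundary--boundary term into $\tfrac12\int_{\partial_m\Omega\times\partial_m\Omega}(w(y)-w(x))^2$ via the antisymmetric integration-by-parts formula (Lemma~\ref{intpart}), and controls the cross term $\int_{\partial_m\Omega}\int_\Omega w(y)w(x)\,dm_x(y)\,d\nu(x)$ by Young's inequality followed by reversibility. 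You instead expand the full boundary energy $I=\int_{\partial_m\Omega\times\Omega_m}(w(y)-w(x))^2\,d(\nu\otimes m_x)$, kill the cross term exactly with the mean-value identity $\int_{\Omega_m}w(y)\,dm_x(y)=w(x)m_x(\Omega_m)$, and obtain the sharp identity $I+\int_{\partial_m\Omega}m_x(\Omega)w^2\,d\nu=\int_\Omega m_y(\partial_m\Omega)\,w^2\,d\nu$ before discarding the $\partial_m\Omega\times\Omega$ piece of $I$ and bounding $m_y(\partial_m\Omega)\le 1$. Your version avoids Young's inequality altogether and is in fact slightly stronger (it retains the cross energy on $\partial_m\Omega\times\Omega$ and the weight $m_y(\partial_m\Omega)$ on the right), while the paper's version reaches the same estimate with less bookkeeping of the partition $\Omega_m=\Omega\cup\partial_m\Omega$. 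Both arguments only use $w\in L^2(\Omega_m,\nu)$ and reversibility, so the integrability needed to expand the square or apply Fubini is available in either case.
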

 \begin{proof} \eqref{ben001} is equivalent to
 $$m_x(\Omega)u_i(x)   - \int_{\partial_m \Omega} (u_i(y)-u_i(x)) dm_x(y) = \int_\Omega u_i(y) dm_x(y) \quad x \in \partial_m \Omega,  $$
 for $i = 1,2.$
 Hence, for $x \in \partial_m \Omega$, we have
 $$\begin{array}{l} \displaystyle m_x(\Omega)(u_1(x) - u_2(x))  - \int_{\partial_m \Omega} \left((u_1(y) - u_2(y)) -(u_1(x)- u_2(x)) \right) dm_x(y) \\ \\ \displaystyle = \int_\Omega (u_1(y) - u_2(y)) dm_x(y).\end{array}$$
Then, multiplying by $(u_1(x) - u_2(x))$, integrating over $\partial_m \Omega$ with respect to $\nu$ and applying integration by parts and the reversibility of $\nu$ with respect to $m$, we get
$$\begin{array}{l}  \displaystyle\int_{\partial_m \Omega} m_x(\Omega)(u_1(x) - u_2(x))^2  d\nu(x) \\ \\
 \displaystyle \quad + \frac12 \int_{\partial_m \Omega} \int_{\partial_m \Omega}  \left((u_1(y) - u_2(y)) -(u_1(x)- u_2(x)) \right)^2 dm_x(y) d\nu(x)\\ \\
 \displaystyle= \int_{\partial_m \Omega} \int_\Omega (u_1(y) - u_2(y))(u_1(x) - u_2(x)) dm_x(y)d \nu(x)\\ \\
 \displaystyle\leq \frac12 \int_{\partial_m \Omega} \int_\Omega   (u_1(x) - u_2(x))^2 dm_x(y)d \nu(x) +\frac12  \int_{\partial_m \Omega} \int_\Omega  (u_1(y) - u_2(y))^2 dm_x(y)d \nu(x) \\ \\
 \displaystyle=\frac12  \int_{\partial_m \Omega} (u_1(x) - u_2(x))^2 m_x(\Omega) d\nu(x) +\frac12  \int_\Omega \int_{\partial_m \Omega} (u_1(x) - u_2(x))^2 dm_x(y)d \nu(x) \\ \\
 \displaystyle\leq \frac12  \int_{\partial_m \Omega} (u_1(x) - u_2(x))^2 m_x(\Omega) d\nu(x) +\frac12  \int_\Omega   (u_1(x) - u_2(x))^2 d \nu(x)\end{array}$$
 \end{proof}

As a consequence of the above result, given $u\in L^2(\Omega,\nu)$,  if  there exists  $T(u)\in L^2(\Omega_m,\nu)$   with $T(u)_{\vert \Omega} = u$   and satisfying
\begin{equation}\label{ole}
0= -\int_{\Omega_m} (T(u)(y)-T(u)(x)) dm_x(y), \quad x \in \partial_m\Omega,
\end{equation}
then $T(u)$ is unique.

 Let us consider the nonempty convex set
$$K:= \{ u\in L^2(\Omega,\nu) \ : \ \exists T(u)\in L^2(\Omega_m,\nu) \ \hbox{with $T(u)_{\vert \Omega} = u$ and satisfying} \ \eqref{ole} \}, $$
and the energy  operator $\mathcal {F} : L^2(\Omega,\nu) \rightarrow (-\infty, +\infty]$ given by
 \begin{equation}\label{operator1}
 \mathcal {F}(u):= \left\{\begin{array}{ll} \displaystyle \frac14 \int_{\Omega_m \times \Omega_m}\left(T(u)(y) - T(u)(x) \right)^2 d(\nu\otimes m_x)(x,y), &\quad  u \in K,\\ \\ +\infty,   &\quad\hbox{else.}  \end{array}   \right.
 \end{equation}
It follows that $\mathcal {F}$ is proper and convex. Moreover, we also have:

 \begin{lemma}\label{LSCC} The operator $\mathcal {F}$ is lower semi-continuous in $L^2(\Omega,\nu)$.
 \end{lemma}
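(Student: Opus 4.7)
The plan is to take $u_n \to u$ in $L^2(\Omega,\nu)$ and prove $\mathcal{F}(u) \le \liminf_n \mathcal{F}(u_n)$. Setting $L := \liminf_n \mathcal{F}(u_n)$, I may assume $L < +\infty$ and pass to a subsequence (not relabeled) with $\mathcal{F}(u_n) \to L$; then each $u_n \in K$ and the harmonic extensions $T(u_n) \in L^2(\Omega_m,\nu)$ satisfy \eqref{ole}. The strategy is to extract a weakly convergent subsequence of $\{T(u_n)\}$, identify its limit with $T(u)$, and conclude via the weak lower semi-continuity of the convex quadratic energy
$$G(v) := \tfrac{1}{4}\int_{\Omega_m \times \Omega_m}(v(y)-v(x))^2\, d(\nu \otimes m_x)(x,y).$$
A direct computation using the reversibility of $\nu$ bounds the $L^2(\nu\otimes m_x)$-norm of $(x,y)\mapsto v(y)-v(x)$ by $2\|v\|_{L^2(\Omega_m,\nu)}$, so $G$ is strongly continuous on $L^2(\Omega_m,\nu)$, hence weakly LSC by convexity.

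For weak compactness I would invoke the Poincar\'{e}-type inequality of Assumption~\ref{as3} (in force for the Gunzburger--Lehoucq Neumann setting underlying $\mathcal{F}$) applied to $w = T(u_n)$:
$$\|T(u_n)\|_{L^2(\Omega_m,\nu)} \le 2\lambda\,\mathcal{F}(u_n)^{1/2} + \lambda\left|\int_\Omega u_n\, d\nu\right|,$$
whose right-hand side is uniformly bounded. Reflexivity yields a further subsequence with $T(u_n) \rightharpoonup w$ in $L^2(\Omega_m,\nu)$; pairing against any $\varphi \in L^2(\Omega,\nu)$ extended by zero to $\Omega_m$ identifies $w\mid_\Omega = u$.

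The main obstacle is verifying that $w$ satisfies \eqref{ole} for $\nu$-a.e. $x\in\partial_m\Omega$, since the functional $v \mapsto \int_{\Omega_m} v\, dm_x$ is not continuous on the weak $L^2(\Omega_m,\nu)$-topology. I would test against an arbitrary $\phi \in L^\infty(\partial_m\Omega,\nu)$ (extended by zero to $\Omega_m$): multiplying \eqref{ole} for $T(u_n)$ by $\phi(x)$, integrating over $\partial_m\Omega$ with respect to $\nu$, and using the reversibility of $\nu$ with respect to $m$ to swap $x$ and $y$ in the first term rewrites the identity as
$$0 = \int_{\Omega_m} T(u_n)(y)\left[\int_{\partial_m\Omega}\phi(x)\, dm_y(x) - \1_{\partial_m\Omega}(y)\,\phi(y)\, m_y(\Omega_m)\right] d\nu(y),$$
and the bracketed weight lies in $L^\infty(\Omega_m,\nu) \subset L^2(\Omega_m,\nu)$ because $\nu(\Omega_m) < +\infty$. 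Passing to the weak limit and reversing the manipulation give
$$\int_{\partial_m\Omega}\phi(x)\int_{\Omega_m}(w(y)-w(x))\,dm_x(y)\,d\nu(x) = 0;$$
a routine Cauchy--Schwarz estimate using reversibility and $m_x(\Omega_m) \le 1$ places the inner integral in $L^1(\partial_m\Omega,\nu)$, so arbitrariness of $\phi \in L^\infty$ forces \eqref{ole} $\nu$-a.e.

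Once \eqref{ole} holds for $w$, the uniqueness of the extension recorded after Lemma~\ref{Pcontr1} forces $T(u) = w$, hence $u \in K$, and the weak LSC of $G$ yields
$$\mathcal{F}(u) = G(w) \le \liminf_n G(T(u_n)) = \liminf_n \mathcal{F}(u_n) = L,$$
which is the desired lower semi-continuity.
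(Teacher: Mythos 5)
Your proof is correct, but it follows a genuinely different route from the paper's. The paper never invokes weak compactness: it applies the contraction estimate of Lemma~\ref{Pcontr1} to differences of the $u_n$ to get \emph{strong} convergence of $m_x(\Omega)^{1/2}T(u_n)$ in $L^2(\partial_m\Omega,\nu)$ and of the increments $T(u_n)(y)-T(u_n)(x)$ in $L^2(\partial_m\Omega\times\partial_m\Omega,\nu\otimes m_x)$, then splits $4\mathcal{F}(u_n)$ into the three blocks $\Omega\times\Omega$, $\partial_m\Omega\times\Omega$ and $\partial_m\Omega\times\partial_m\Omega$ and passes to the limit term by term, actually obtaining the stronger conclusion $\mathcal{F}(u_n)\to\mathcal{F}(u)$ along energy-bounded sequences. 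You instead run the direct method: coercivity from the Poincar\'e inequality \eqref{agost004} gives a weak $L^2(\Omega_m,\nu)$ limit $w$ of the extensions, the linear constraint \eqref{ole} is shown to be weakly closed by dualizing against $L^\infty(\partial_m\Omega,\nu)$ and using reversibility, and the quadratic energy is weakly lower semi-continuous as the squared norm of a bounded linear map. The trade-off is clear: the paper's argument does not use Assumption~\ref{as3} at all, so the lemma holds without any Poincar\'e hypothesis, whereas your argument imports it (it is available in this subsection, since the lemma feeds into Theorem~\ref{case2} for the operator $B^m_{\a_2,0}$ studied under that assumption, but it is a genuine extra hypothesis for the lemma itself). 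On the other hand, your route settles more transparently that the limit $u$ belongs to $K$ with a bona fide extension satisfying \eqref{ole} --- a point the paper handles rather tersely by citing Lemma~\ref{Pcontr1} to assert convergence to $T(u)$ before $T(u)$ has been constructed --- and your duality computation for the weak closedness of the constraint, including the observation that the bracketed weight lies in $L^2(\Omega_m,\nu)$ because $\nu(\Omega_m)<+\infty$, is complete. One small presentational point: when you reverse the manipulation to recover \eqref{ole} for $w$, it is worth stating explicitly that Fubini applies because $\int_{\partial_m\Omega}\int_{\Omega_m}|\phi(x)|\,|w(y)|\,dm_x(y)\,d\nu(x)\le \Vert\phi\Vert_\infty\Vert w\Vert_{L^1(\Omega_m,\nu)}<+\infty$ by reversibility; you allude to this via Cauchy--Schwarz, and it is indeed routine.
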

 \begin{proof} Let $u_n \in L^2(\Omega,\nu)$ such that $u_n \to u$ in $L^2(\Omega,\nu)$. We can assume that $$\liminf_{n \to \infty} \mathcal {F}(u_n) < +\infty.$$
 Hence, without loss of generality, we can assume that $u_n \in K$ for all $n \in \N$ and
 $$\liminf_{n \to \infty} \mathcal {F}(u_n) = \lim_{n \to \infty} \mathcal {F}(u_n).$$

 Note that, by Lemma \ref{Pcontr1}, we have
 \begin{equation}\label{E1lemma}
 m_x(\Omega)^{\frac12} T(u_n) \to m_x(\Omega)^{\frac12} T(u) \quad \hbox{in} \ \ L^2(\partial_m \Omega,\nu),
 \end{equation}
 and
 \begin{equation}\label{E2lemma}
T(u_n)(y) - T(u_n)(x) \to T(u)(y) - T(u)(x) \quad \hbox{in} \ \ L^2(\partial_m \Omega \times \partial_m \Omega, \nu\otimes m_x).
 \end{equation}

Now, by the reversibility of $\nu$, we have
\begin{align*}4 \mathcal {F}(u_n) &= \int_{\Omega \times \Omega} (u_n(y) - u_n(x))^2 d(\nu\otimes m_x)(x,y)\\
  & \quad + 2 \int_{\partial_m \Omega} \int_\Omega \left(T(u_n)(y) - T(u_n)(x) \right)^2  d m_x(y)d\nu(x)  \\  & \quad +\int_{\partial_m \Omega \times \partial_m \Omega} \left(T(u_n)(y) - T(u_n)(x) \right)^2 d(\nu\otimes m_x)(x,y).
  \end{align*}
 Let's see what happens term by term. Since $u_n \to u$ in $L^2(\Omega,\nu)$ and by \eqref{E2lemma}, we have
 $$\lim_{n \to \infty} \int_{\Omega \times \Omega} (u_n(y) - u_n(x))^2 d(\nu\otimes m_x)(x,y) = \int_{\Omega \times \Omega} (u(y) - u(x))^2 d(\nu\otimes m_x)(x,y)$$
 and
 $$\lim_{n \to \infty}\int_{\partial_m \Omega \times \partial_m \Omega} \left(T(u_n)(y) - T(u_n)(x) \right)^2 d(\nu\otimes m_x)(x,y) $$ $$= \int_{\partial_m \Omega \times \partial_m \Omega} \left(T(u)(y) - T(u)(x) \right)^2 d(\nu\otimes m_x)(x,y).$$
 On the other hand,
 $$\int_{\partial_m \Omega} \int_\Omega \left(T(u_n)(y) - T(u_n)(x) \right)^2  d m_x(y)d\nu(x) =  \int_{\partial_m \Omega} \int_\Omega \left(T(u_n)(y) \right)^2  d m_x(y)d\nu(x) $$ $$- 2 \int_{\partial_m \Omega} \int_\Omega T(u_n)(y)T(u_n)(x)   d m_x(y)d\nu(x) + \int_{\partial_m \Omega} \int_\Omega \left(T(u_n)(x) \right)^2  d m_x(y)d\nu(x).$$
 By the reversibility of $\nu$, we have
 $$\begin{array}{l} \displaystyle\int_{\partial_m \Omega} \int_\Omega \left(T(u_n)(y) \right)^2  d m_x(y)d\nu(x) = \int_\Omega \int_{\partial_m \Omega} \left(T(u_n)(x) \right)^2  d m_x(y)d\nu(x) \\ \\
 = \displaystyle\int_\Omega m_x(\partial_m \Omega) \left(T(u_n)(x) \right)^2 d\nu(x) \stackrel{n}{\longrightarrow} \int_\Omega m_x(\partial_m \Omega) \left(T(u)(x) \right)^2 d\nu(x) \\ \\
 = \displaystyle\int_{\partial_m \Omega} \int_\Omega \left(T(u)(y) \right)^2  d m_x(y)d\nu(x).\end{array}$$

 Now, by \eqref{E1lemma},
 $$\begin{array}{l} \displaystyle\int_{\partial_m \Omega} \int_\Omega \left(T(u_n)(x) \right)^2  d m_x(y)d\nu(x) \\ \\
 \displaystyle= \int_{\partial_m \Omega} m_x(\Omega) \left(T(u_n)(x) \right)^2 d\nu(x) \stackrel{n}{\longrightarrow} \int_{\partial_m \Omega} m_x(\Omega) \left(T(u)(x) \right)^2 d\nu(x) \\ \\
 \displaystyle= \int_{\partial_m \Omega} \int_\Omega \left(T(u)(x) \right)^2  d m_x(y)d\nu(x).\end{array}$$
 Finally, by the reversibility of $\nu$ with respect to $m$, we have
 $$\int_{\partial_m \Omega} \int_\Omega T(u_n)(y)T(u_n)(x)   d m_x(y)d\nu(x) = \int_\Omega u_n(x) \left(\int_{\partial_m \Omega} T(u_n)(y) dm_x(y)\right) d\nu(x).$$
However, by the reversibility of $\nu$ with respect to $m$ and \eqref{E1lemma}, we have
 $$\int_\Omega \left\vert \int_{\partial_m \Omega} T(u_n)(y) dm_x(y) - \int_{\partial_m \Omega} T(u)(y) dm_x(y) \right\vert^2 d \nu(x)$$ $$ \leq \int_\Omega  \int_{\partial_m \Omega} \left\vert T(u_n)(y) - T(u)(y)\right\vert^2 dm_x(y)  d \nu(x) $$ $$= \int_{\partial_m \Omega} \int_\Omega   \left\vert T(u_n)(x) - T(u)(x)\right\vert^2 dm_x(y)  d \nu(x)$$ $$= \int_{\partial_m \Omega} m_x(\Omega)  \left\vert T(u_n)(x) - T(u)(x)\right\vert^2   d \nu(x) \stackrel{n}{\longrightarrow} 0.$$
Hence,
 $$\int_{\partial_m \Omega} T(u_n)(y) dm_x(y)  \stackrel{n}{\longrightarrow}  \int_{\partial_m \Omega} T(u)(y) dm_x(y) \quad \hbox{in} \ L^2(\Omega, \nu),$$
 and, consequently,
 $$- 2 \int_{\partial_m \Omega} \int_\Omega T(u_n)(y)T(u_n)(x)   d m_x(y)d\nu(x) \stackrel{n}{\longrightarrow} - 2 \int_{\partial_m \Omega} \int_\Omega T(u)(y)T(u)(x)   d m_x(y)d\nu(x).$$
Therefore, we have proved that
 $$\int_{\partial_m \Omega} \int_\Omega \left(T(u_n)(y) - T(u_n)(x) \right)^2  d m_x(y)d\nu(x) \stackrel{n}{\longrightarrow} \int_{\partial_m \Omega} \int_\Omega \left(T(u)(y) - T(u)(x) \right)^2  d m_x(y)d\nu(x),$$
thus
 $$\mathcal {F}(u) =\lim_{n \to \infty} \mathcal {F}(u_n).$$
 \end{proof}

 \begin{theorem}\label{case2} If $a_2(x,y,r)=r$ then $B^m_{\a_2,0} = \partial \mathcal{F}$ and, consequently, there exists a unique strong solution $ u(t,x)$ of Problem~\eqref{Neumann4} with $\varphi=0$ for any initial datum in $L^2(\Omega,\nu)$.

 \end{theorem}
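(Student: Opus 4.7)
The plan is to identify the operator $B^m_{\a_2,0}$ with the subdifferential $\partial\mathcal{F}$ of the functional defined in \eqref{operator1}, and then invoke the Brezis--Komura theory of evolution equations governed by subdifferentials on Hilbert spaces in order to obtain strong solutions for arbitrary $L^2$ initial data. The first preliminary step is to verify that $\mathcal{F}$ is proper, convex, and lower semi-continuous on $L^2(\Omega,\nu)$: properness is clear since $0\in K$; convexity follows from the convexity of $t\mapsto t^2$ together with the linearity of the extension map $u\mapsto T(u)$ on $K$ (linearity being a consequence of the uniqueness of $T(u)$ guaranteed by Lemma \ref{Pcontr1}); and the lower semi-continuity is precisely Lemma \ref{LSCC}.

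Next I would prove the inclusion $B^m_{\a_2,0}\subseteq \partial\mathcal{F}$. Let $(u,v)\in B^m_{\a_2,0}$; by definition $u$ admits an extension $\overline u\in L^2(\Omega_m,\nu)$ satisfying $-\mathrm{div}_m\a_2 \overline u=v$ on $\Omega$ and $\mathcal{N}^{\a_2}_1\overline u=0$ on $\partial_m\Omega$. The latter is exactly condition \eqref{ole}, so $u\in K$ with $T(u)=\overline u$. For any $w\in K$, the elementary convexity estimate
$$(T(w)(y)-T(w)(x))^2-(T(u)(y)-T(u)(x))^2\ge 2(T(u)(y)-T(u)(x))\bigl((T(w)-T(u))(y)-(T(w)-T(u))(x)\bigr),$$
integrated over $Q_1$ against $\tfrac14\,d(\nu\otimes m_x)$, combined with the integration by parts formula of Proposition \ref{prop23} applied to $T(w)-T(u)\in L^2(\Omega_m,\nu)$ and the fact that $\mathcal{N}^{\a_2}_1 u=0$ on $\partial_m\Omega$, yields
$$\mathcal{F}(w)-\mathcal{F}(u)\ge \int_\Omega v(w-u)\,d\nu.$$
For $w\notin K$ the inequality is trivial because $\mathcal{F}(w)=+\infty$. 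Hence $v\in\partial\mathcal{F}(u)$.

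To upgrade the inclusion to an equality I would appeal to maximal monotonicity. By Theorem \ref{teo27}, $B^m_{\a_2,0}$ is $m$-completely accretive in $L^{p'}(\Omega,\nu)=L^2(\Omega,\nu)$, hence in particular maximal monotone on $L^2$. On the other hand, $\partial\mathcal{F}$ is maximal monotone as the subdifferential of a proper convex lsc functional on the Hilbert space $L^2(\Omega,\nu)$. A monotone operator that contains a maximal monotone operator must coincide with it, and so $B^m_{\a_2,0}=\partial\mathcal{F}$. Existence and uniqueness of a strong solution for every $u_0\in L^2(\Omega,\nu)$ then follows from the Brezis--Komura theorem: for $u_0$ in the $L^2$-closure of $D(\mathcal{F})$ a unique strong solution exists with the regularising effect $u(t)\in D(\partial\mathcal{F})$ for a.e. $t>0$; and $\overline{D(\mathcal{F})}^{L^2}=L^2(\Omega,\nu)$ because $D(B^m_{\a_2,0})\subseteq D(\mathcal{F})$ and the former is already dense in $L^2$ by Theorem \ref{remdom01}.

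The step I anticipate as the main obstacle is the verification that all integrability conditions in the subdifferential computation are met so that Proposition \ref{prop23} may be applied legitimately: namely that $T(w)-T(u)\in L^2(\Omega_m,\nu)$ and that $(x,y)\mapsto T(u)(y)-T(u)(x)\in L^2(Q_1,\nu\otimes m_x)$. Both are consequences of $u\in D(B^m_{\a_2,0})$, $w\in K$, the finiteness $\nu(\Omega_m)<+\infty$ and the reversibility of $\nu$. A minor but essential point that must be checked is the linearity of the extension $T$, which rests on the uniqueness part of Lemma \ref{Pcontr1}.
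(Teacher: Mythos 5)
Your proposal is correct and follows essentially the same route as the paper: both show the inclusion $B^m_{\a_2,0}\subseteq\partial\mathcal{F}$ by multiplying the equation by $T(w)-T(u)$, integrating by parts via Proposition~\ref{prop23} (the boundary term vanishing thanks to the homogeneous Neumann condition), and using the elementary inequality $2AB-A^2\le B^2$ --- which is exactly your convexity estimate --- before upgrading to equality by maximal monotonicity and concluding with the Brezis theory for subdifferentials. Your explicit remarks on the linearity of $T$ (via the uniqueness from Lemma~\ref{Pcontr1}) and on the density $\overline{D(\mathcal{F})}^{L^2}=L^2(\Omega,\nu)$ via Theorem~\ref{remdom01} merely make explicit points the paper leaves implicit.
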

 \begin{proof} Since $\mathcal{F}$ is proper, convex and lower semi-continuous, we have that $\partial \mathcal{F}$  is maximal monotone   and $\overline{\hbox{Dom}(\partial \mathcal{F})}^{L^2(\Omega,\nu)}=\overline{\hbox{Dom}(\mathcal{F})}^{L^2(\Omega,\nu)}$. Consequently, if $B^m_{\a_p,0} \subset \partial \mathcal{F}$ then
  $$B^m_{\a_p,0} =\partial \mathcal{F}.$$ Now, given $(u,v ) \in B^m_{\a_p,0}$,  there exists  a unique  $T(u)\in L^2(\Omega_m,\nu)$  with $T(u)_{\vert \Omega} = u$ and  satisfying
 \begin{equation}\label{1ok}
  -\int_{\Omega_m} (T(u)(y)-T(u)(x)) dm_x(y) = v(x),  \quad x \in \Omega,
 \end{equation}
 and
 \begin{equation}\label{2ok}
0= -\int_{\Omega_m} (T(u)(y)-T(u)(x)) dm_x(y), \quad x \in \partial_m\Omega.
 \end{equation}
Then, given $w \in L^2(\Omega,\nu)$ such that $\mathcal {F}(w) < +\infty$,  multiplying \eqref{1ok} by $T(w) - T(u)$ and integrating over $\Omega$ with respect to $\nu$, by integrating by parts, we get
$$\begin{array}{l}  \displaystyle\int_{\Omega } v(x) \Big(T(w)(x) - T(u)(x)\Big) d\nu(x) \\ \\
\displaystyle= \frac12 \int_{\Omega_m\times\Omega_m} \big(T(u)(y)-T(u)(x)\big) \left(\big(T(w)(y) - T(u)(y)\big) - \big(T(w)(x) - T(u)(x)\big) \right) d(\nu\otimes m_x)(x,y) \\ \\
 \displaystyle=\frac12 \int_{\Omega_m\times\Omega_m} \left(\big(T(u)(y)-T(u)(x)\big)\big(T(w)(y) - T(w)(x)\big) - \big(T(u)(y) - T(u)(x)\big)^2 \right) d(\nu\otimes m_x)(x,y) \\ \\
 \displaystyle\leq  \frac14 \int_{\Omega_m\times\Omega_m} \left(\big(T(w)(y) - T(w)(x)\big)^2 -\big(T(u)(y) - T(u)(x)\big)^2 \right) d(\nu\otimes m_x)(x,y) \\ \\
  \displaystyle= \mathcal{F}(w) - \mathcal{F}(u).\end{array}$$
Therefore, $(u,v) \in \partial \mathcal{F}$ as required. \end{proof}

\begin{remark}\label{pf001}
 Assume that
the following {\it Poincar\'{e} type    inequality} holds: there exists a constant $\tilde\lambda>0$ such that, for any $u \in L^2(\partial_m\Omega,\nu)$,
\begin{equation}\label{sndpine} \left\Vert  u - \frac{1}{\nu(\partial_m\Omega)} \int_{\partial_m\Omega} u d\nu \right\Vert_{L^2(\partial_m\Omega,\nu)}  \leq \tilde\lambda\left(\int_{\partial_m\Omega\times\partial_m\Omega} |u(y)-u(x)|^2 d(\nu\otimes m_x)(x,y) \right)^{\frac12},
\end{equation}
(under rather general conditions, there
are metric random walk spaces satisfying this kind of inequality, recall the comment after~\eqref{agost004}).  Using Lemma~\ref{EPcontr1},  it is easy to see that the previously defined set $K$ is closed in $L^2(\Omega, \nu)$. Hence, since  by Theorem~\ref{case2} and Theorem~\ref{remdom01} we have that $\overline{K}^{L^2(\Omega,\nu)}=L^2(\Omega,\nu)$, we conclude that, in fact, $K=L^2(\Omega,\nu)$.

\end{remark}

\subsection{Nonlocal problems with nonsingular kernels}

Let $\Omega \subset \R^N$ be an open bounded set and  $J:\R^N\rightarrow [0,+\infty[$  a measurable, nonnegative and radially symmetric
function  verifying $\displaystyle \int J=1$. Consider the metric random walk space $[\R^N, d, m^J]$ as specified in Example \ref{graphs101}. Then, $[\R^N, d, m^J, \mathcal{L}^N]$ satisfies the Poincar\'{e}'s inequality~\eqref{agost004} (see \cite{MST2}, note that slight modifications in the results given there are required to prove our statement).
Let $\a_p(x,y,r)=|r|^{p-2}r,$ which is  positive homogeneous.
Then, if we consider the problem
\begin{equation}\label{Neumann4N}
\left\{ \begin{array}{ll} u_t(t,x) = \displaystyle\int_{\Omega_{m^J}} J(y-x) \vert u(y)-u(x)\vert^{p-2}(u(y) - u(x)) dy, \quad
   &x\in  \Omega,\ 0<t<T, \\ \\ - \displaystyle\int_{\Omega_{m^J}} J(y-x) \vert u(y)-u(x)\vert^{p-2}(u(y) - u(x)) dy = 0, \quad   &x\in\partial_{m^J}\Omega, \  0<t<T, \\ \\ u(x,0) = u_0(x),    &x\in \Omega, \end{array} \right.
\end{equation}
we can apply Theorem \ref{nsth01S} and Theorem \ref{case2} to get the following existence and uniqueness result.
\begin{theorem}\label{nsth01J}     For any
$u_0\in L^{p'}(\Omega, \mathcal{L}^N)$  there exists a unique strong solution $ u(t,x)$ of  Problem~\eqref{Neumann4N}.
 Moreover, for any $q\geq p'$ and  $u_{0i}\in L^q(\Omega,\mathcal{L}^N)$,  $i=1,2$, we have the following contraction principle for the corresponding strong solutions $u_i$:
$$  \Vert (u_1(t,.)-u_2(t,.))^+\Vert_{L^q(\Omega,\mathcal{L}^N)}\le \Vert (u_{0,1}-u_{0,2})^+\Vert_{L^q(\Omega,\mathcal{L}^N)}
 \quad \hbox{for any \  \ $0\le t< T$.}$$
\end{theorem}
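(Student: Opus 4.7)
The plan is to recognize Problem \eqref{Neumann4N} as a special instance of Problem~\eqref{Neumann4} in Section~\ref{aus23}, with $X=\R^N$, $d$ the Euclidean distance, $m=m^J$, $\nu=\mathcal{L}^N$, $\a_p(x,y,r)=|r|^{p-2}r$, and $\varphi\equiv 0$, and then to invoke Theorem~\ref{nsth01S} for $p\neq 2$ and Theorem~\ref{case2} for $p=2$.

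First, I would verify that all the standing hypotheses of Section~\ref{aus23} are satisfied in this concrete setup. Assumption~\ref{as1} holds because $\mathcal{L}^N$ is invariant and reversible for $m^J$ (Example~\ref{graphs101}), $m^J_x\ll\mathcal{L}^N$ by definition of $m^J$, and $m^J$-connectedness is a direct consequence of $J$ being strictly positive on a neighbourhood of the origin (or, more generally, of the standing hypothesis on $J$ ensuring connectedness, which is routinely assumed in this setting). Assumption~\ref{as2} follows from $\Omega$ being bounded together with the fact that $\partial_{m^J}\Omega\subset\{x\in\R^N\setminus\Omega:\operatorname{dist}(x,\Omega)\in\operatorname{supp}(J)\}$, which is bounded and hence of finite Lebesgue measure. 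Assumption~\ref{as3}, the Poincar\'e type inequality~\eqref{agost004}, is exactly what is cited from~\cite{MST2} in the paragraph preceding the theorem. Finally, $\a_p(x,y,r)=|r|^{p-2}r$ trivially verifies \eqref{ll002}--\eqref{llo2} with $c=C=1$: measurability and continuity are obvious, $\a_p(y,x,-r)=|{-r}|^{p-2}(-r)=-|r|^{p-2}r=-\a_p(x,y,r)$, strict monotonicity in $r$ is classical for $r\mapsto|r|^{p-2}r$, the growth $|\a_p|=|r|^{p-1}\le 1+|r|^{p-1}$ is immediate, and $\a_p(x,y,r)\,r=|r|^p$ gives \eqref{llo2}. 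Moreover $\a_p(x,y,\lambda r)=\lambda^{p-1}\a_p(x,y,r)$ for $\lambda>0$, so $\a_p$ is positive homogeneous in the sense of Section~\ref{PCases}.

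Next, for $p\neq 2$, I would simply apply Theorem~\ref{nsth01S} with $\varphi=0$: since $L^{p'}(\Omega,\mathcal{L}^N)=\overline{D(B^{m^J}_{\a_p,0})}^{L^{p'}(\Omega,\mathcal{L}^N)}$ by Theorem~\ref{remdom01}, every $u_0\in L^{p'}(\Omega,\mathcal{L}^N)$ yields a unique strong solution, and the $L^q$ contraction principle with $q\ge p'$ is inherited from the general statement of Theorem~\ref{nsth01S}, itself a consequence of the $m$-complete accretivity established in Theorem~\ref{teo27} via Theorem~\ref{teointronls}. For $p=2$, we have $p'=2$ and $\a_2(x,y,r)=r$, so Theorem~\ref{case2} provides existence and uniqueness of a strong solution for every $u_0\in L^2(\Omega,\mathcal{L}^N)$; the contraction principle in $L^q(\Omega,\mathcal{L}^N)$ with $q\ge 2$ again follows from the $m$-complete accretivity of $B^{m^J}_{\a_2,0}$ through Theorem~\ref{teointronls}.

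The whole argument is essentially a verification exercise and should go through without further analytical work; the only non-trivial ingredients are external, namely the validity of the Poincar\'e inequality~\eqref{agost004} for $[\R^N,d,m^J,\mathcal{L}^N]$ (taken from~\cite{MST2} with the minor adjustment alluded to after Assumption~\ref{as3}) and, for $p=2$, the identification $B^{m^J}_{\a_2,0}=\partial\mathcal F$ carried out in Theorem~\ref{case2}. So the main ``obstacle'' is not in the present proof but in being comfortable that these two prerequisites genuinely cover the setting of $[\R^N,d,m^J,\mathcal{L}^N]$ with a bounded $\Omega$; once this is granted, Theorem~\ref{nsth01J} is immediate.
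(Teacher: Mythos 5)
Your proposal is correct and follows exactly the route the paper takes: verify that $[\R^N,d,m^J,\mathcal{L}^N]$ with $\a_p(x,y,r)=|r|^{p-2}r$ satisfies the standing assumptions (including the Poincar\'e inequality imported from \cite{MST2}) and the positive homogeneity of $\a_p$, then invoke Theorem~\ref{nsth01S} for $p\neq 2$ and Theorem~\ref{case2} for $p=2$, with the $L^q$ contraction coming from complete accretivity via Theorem~\ref{teointronls}. The only minor caveat is that your boundedness argument for $\partial_{m^J}\Omega$ in verifying Assumption~\ref{as2} implicitly requires $J$ to have compact support, an assumption the paper itself leaves tacit under its standing Assumption~\ref{as2}.
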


Consider now the problem
\begin{equation}\label{Neumann4NValdi}
\left\{ \begin{array}{ll} u_t(t,x) = \displaystyle\int_{\Omega_{m^J}} J(y-x) \vert u(y)-u(x)\vert^{p-2}(u(y) - u(x)) dy, \quad
   &x\in  \Omega,\ 0<t<T, \\ \\ - \displaystyle\int_{\Omega} J(y-x) \vert u(y)-u(x)\vert^{p-2}(u(y) - u(x)) dy = \varphi(x), \quad   &x\in\partial_{m^J}\Omega, \  0<t<T, \\ \\ u(x,0) = u_0(x),    &x\in \Omega. \end{array} \right.
\end{equation}
Applying Theorem \ref{nsth01N}, we get the following existence and uniqueness result.

\begin{theorem}\label{euValdi}
 Let $\varphi\in L^{m,\infty}(\partial_m\Omega,\mathcal{L}^N)$. For every $u_0 \in L^\infty (\Omega,\mathcal{L}^N)$ there exists a unique strong solution of Problem \eqref{Neumann4NValdi}. If $p\ge 2$, this is also true for data in $L^{p-1}(\Omega,\mathcal{L}^N)$.
 \end{theorem}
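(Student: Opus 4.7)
The plan is to apply Theorem \ref{nsth01N} directly to the metric random walk space $[\R^N,d,m^J]$ with the concrete choice $\a_p(x,y,r) = |r|^{p-2}r$. The proof is therefore a matter of checking that all the standing assumptions and the six structural hypotheses on $\a_p$ are satisfied in this setting, and then quoting the abstract results.

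First, I would verify the standing framework. Assumption \ref{as1} holds because, as noted in Example \ref{graphs101}, $\mathcal{L}^N$ is invariant and reversible for $m^J$, and the strict positivity of $J$ near the origin together with the Euclidean structure yields the required $m$-connectedness; trivially $m^J_x \ll \mathcal{L}^N$ for every $x$. Assumption \ref{as2} is the condition $\mathcal{L}^N(\Omega_{m^J}) < \infty$, which is in force in this subsection (the bounded domain $\Omega$ together with the behaviour of $\mathrm{supp}(J)$ near $\Omega$ is what ensures this).

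Next, I would check the six conditions on $\a_p(x,y,r)=|r|^{p-2}r$ one by one: this function is independent of $(x,y)$ so measurability in $(x,y)$ is trivial and condition \eqref{ll002} (continuity in $r$) is obvious; \eqref{llo4} holds because $r \mapsto |r|^{p-2}r$ is odd, giving $\a_p(x,y,r) = -\a_p(y,x,-r)$; \eqref{llo3} holds because this map is strictly increasing; and \eqref{llo1}--\eqref{llo2} hold with $C = c = 1$ since $|\a_p(x,y,r)| = |r|^{p-1}$ and $\a_p(x,y,r)r = |r|^p$.

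Having the framework in place, Theorem \ref{nsth01N} applied to $\varphi \in L^{m,\infty}(\partial_{m^J}\Omega, \mathcal{L}^N)$ yields, for every $u_0 \in L^{p'}(\Omega,\mathcal{L}^N)$, a unique mild solution of \eqref{Neumann4NValdi}, together with the $L^q$ contraction principle. Theorem \ref{Nremdom01ferrogen} gives $L^\infty(\Omega,\mathcal{L}^N)\subset D(A^{m^J}_{\a_p,\varphi})$, hence for $u_0\in L^\infty(\Omega,\mathcal{L}^N)$ the mild solution is in fact strong, which is the first assertion. For the second assertion, in the case $p\ge 2$ Theorem \ref{forlin001gen1} upgrades this to $L^{p-1}(\Omega,\mathcal{L}^N)\subset D(A^{m^J}_{\a_p,\varphi})$, delivering a strong solution for every $u_0\in L^{p-1}(\Omega,\mathcal{L}^N)$.

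Since this is a specialization of the abstract theory to a particular example, there is no genuine obstacle; the only point requiring minor care is bookkeeping on the domain/range inclusions when translating from $A^{m^J}_{\a_p,\varphi}$ to the concrete evolution problem \eqref{Neumann4NValdi}, but this is immediate from the definitions.
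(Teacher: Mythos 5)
Your proposal is correct and follows exactly the route the paper takes: the paper's proof of Theorem \ref{euValdi} is simply the invocation of Theorem \ref{nsth01N} (together with Theorems \ref{Nremdom01ferrogen} and \ref{forlin001gen1} for the $L^\infty$ and $L^{p-1}$ statements), with the verification that $\a_p(x,y,r)=|r|^{p-2}r$ satisfies \eqref{ll002}--\eqref{llo2} and that Assumptions \ref{as1} and \ref{as2} hold for $[\R^N,d,m^J,\mathcal{L}^N]$ left implicit. Your explicit checking of these hypotheses is a faithful (and slightly more careful) rendering of the same argument.
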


\subsection{Weighted graphs}
 Let $[V(G), d_G, (m^G_x)]$ be the  metric random walk space associated with a locally finite weighted   connected discrete
  graph $G = (V(G), E(G))$, as described in Example \ref{graphs1}. Let $\Omega \subset V(G)$ be a finite set. It is easy to see  (see \cite{MST2})
  that $[\Omega_{m^G}, d_G, m^G, \nu_G]$
  satisfies Poincar\'{e}'s inequality~\eqref{agost004}.  Therefore, if we consider the problem
\begin{equation}\label{Neumann4NG}
\left\{ \begin{array}{ll} u_t(t,x) = \displaystyle \frac{1}{d_x} \sum_{y \in V} w_{x,y} \vert u(y)-u(x)\vert^{p-2}(u(y) - u(x)), \quad
   &x\in  \Omega,\ 0<t<T, \\ \\ - \displaystyle \frac{1}{d_x} \sum_{y \in V} w_{x,y} \vert u(y)-u(x)\vert^{p-2}(u(y) - u(x)) =
  0, \quad   &x\in V \setminus \Omega, \  0<t<T, \\ \\ u(x,0) = u_0(x),    &x\in \Omega, \end{array} \right.
\end{equation}
we can apply Theorem \ref{nsth01S}, to get the following existence and uniqueness result.

\begin{theorem}\label{nsth01JG}  For any
$u_0\in L^{p'}(\Omega, \nu_G)$  there exists a unique strong solution $ u(t,x)$ of Problem~\eqref{Neumann4NG}.
 Moreover, for any $q\geq p'$ and  $u_{0i}\in L^q(\Omega,\nu_G)$,  $i=1,2$, we have the following contraction principle for the corresponding strong solutions $u_i$:
$$  \Vert (u_1(t,.)-u_2(t,.))^+\Vert_{L^q(\Omega,\nu_G)}\le \Vert (u_{0,1}-u_{0,2})^+\Vert_{L^q(\Omega,\nu_G)}
 \quad \hbox{for any \  \ $0\le t< T$.}$$
\end{theorem}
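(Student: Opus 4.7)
The plan is to obtain Theorem \ref{nsth01JG} as a direct instantiation of the abstract results Theorem \ref{nsth01S} (for $p\neq 2$) and Theorem \ref{case2} (for $p=2$), applied to the metric random walk space $[V(G), d_G, m^G, \nu_G]$ with the specific Leray-Lions function $\a_p(x,y,r) := |r|^{p-2}r$ and homogeneous boundary datum $\varphi\equiv 0$. The proof therefore reduces to checking that every standing hypothesis of those theorems is met in this concrete graph setting, after which the conclusion is automatic.

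First, I would verify that $\a_p(x,y,r) = |r|^{p-2}r$ satisfies the structural conditions \eqref{ll002}-\eqref{llo2} with constants $c = C = 1$: continuity and antisymmetry in $r$ are trivial (giving \eqref{llo4}), strict monotonicity in $r$ gives \eqref{llo3}, and the identity $\a_p(x,y,r)\cdot r = |r|^p$ gives both the growth bound \eqref{llo1} and the coercivity \eqref{llo2}. The same formula also shows that $\a_p$ is positive homogeneous of degree $p-1$, as required by Theorem \ref{nsth01S}.

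Next, I would confirm Assumptions \ref{as1}--\ref{as3} for $[\Omega_{m^G}, d_G, m^G, \nu_G]$. Assumption \ref{as1} is covered by Example \ref{graphs1}: $\nu_G$ is reversible with respect to $m^G$; the absolute continuity $m_x^G \ll \nu_G$ follows because the atoms $\{y\}$ with $y \sim x$ appearing in $m_x^G$ all have strictly positive $\nu_G$-mass equal to $d_y > 0$; and $m$-connectedness is a consequence of the assumed connectedness of $G$. For Assumption \ref{as2}, since $\Omega$ is finite and $G$ is locally finite, $\partial_{m^G}\Omega \subset \bigcup_{x\in\Omega}\{y\in V(G): y\sim x\}$ is finite, whence $\nu_G(\Omega_{m^G}) = \sum_{x\in\Omega_{m^G}} d_x < +\infty$. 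Assumption \ref{as3}, the Poincar\'{e} inequality \eqref{agost004}, is established in \cite{MST2} in exactly this context and is thus invoked as a black box.

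Once these hypotheses are in place, the case $p\neq 2$ follows verbatim from Theorem \ref{nsth01S}: for every $u_0 \in L^{p'}(\Omega,\nu_G)$ there exists a unique strong solution of Problem \eqref{Neumann4NG} together with the $L^q$-order-contraction estimate. The case $p=2$ is handled by Theorem \ref{case2}, which identifies $B^{m^G}_{\a_2,0}$ with the subdifferential $\partial\mathcal{F}$ of the proper, convex, lower semicontinuous functional defined in \eqref{operator1}; classical Brezis-Komura theory then yields existence and uniqueness of a strong solution for arbitrary initial data in $L^2(\Omega,\nu_G)$, while the $L^q$-contraction principle is inherited from the complete accretivity of $B^{m^G}_{\a_2,0}$ via Theorem~\ref{teointronls}. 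The only genuinely non-routine input is the Poincar\'{e} inequality \eqref{agost004} on finite subsets of a locally finite weighted connected graph, which is imported from \cite{MST2}; the rest of the proof is a mechanical verification that the hypotheses of the abstract theorems are satisfied in this concrete example.
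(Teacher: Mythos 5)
Your proposal is correct and follows essentially the same route as the paper, which simply verifies that $\a_p(x,y,r)=|r|^{p-2}r$ and the finite set $\Omega$ in a locally finite connected weighted graph satisfy the standing hypotheses (including the Poincar\'e inequality \eqref{agost004}, imported from \cite{MST2}) and then invokes the abstract homogeneous result. You are in fact slightly more careful than the paper's one-line justification, which cites only Theorem~\ref{nsth01S}: since that theorem excludes $p=2$, your explicit appeal to Theorem~\ref{case2} (as the paper itself does for the analogous result in $\R^N$, Theorem~\ref{nsth01J}) is needed to cover the full range of $p$ claimed in the statement.
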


Consider now the problem
\begin{equation}\label{GraphNeumann4NValdi}
\left\{ \begin{array}{ll} u_t(t,x) = \displaystyle \frac{1}{d_x} \sum_{y \in V} w_{x,y} \a_p(x,y,u(y)-u(x)), \quad
   &x\in  \Omega,\ 0<t<T,
 \\ \\ - \displaystyle\frac{1}{d_x} \sum_{y \in \Omega} w_{x,y} \a_p(x,y,u(y)-u(x))  = \varphi(x), \quad   &x\in V \setminus \Omega, \  0<t<T, \\ \\ u(x,0) = u_0(x),    &x\in \Omega. \end{array} \right.
\end{equation}
Applying Theorem \ref{nsth01N}, we get the following existence and uniqueness result.

\begin{theorem}\label{euValdi2}
 Let $\varphi\in L^{\infty}(V \setminus \Omega, \nu_G)$. For every $u_0 \in L^\infty (\Omega,\nu_G)$ there exists a unique strong solution of problem \eqref{GraphNeumann4NValdi}.
 \end{theorem}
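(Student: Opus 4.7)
The plan is to reduce Theorem~\ref{euValdi2} directly to the abstract existence result in Theorem~\ref{nsth01N}, applied to the metric random walk space $[V(G),d_G,(m^G_x)]$ with the reversible measure $\nu_G$ from Example~\ref{graphs1}. First I would check that Assumptions~\ref{as1} and~\ref{as2} hold: reversibility of $\nu_G$ is Example~\ref{graphs1}, $m$-connectedness follows from the graph being connected (so $m_x \ll \nu_G$ trivially since $\nu_G$ charges every vertex), and $\nu_G(\Omega_{m^G}) < +\infty$ follows from the two observations that $\Omega$ is finite and that $\partial_{m^G}\Omega$ is finite as well. The latter is the key finiteness step: since the graph is locally finite, every $y \in \Omega$ has finitely many neighbors, and since $\Omega$ is finite, the set $\partial_{m^G}\Omega = \bigcup_{y\in\Omega}\{x\in V(G)\setminus\Omega : x\sim y\}$ is a finite union of finite sets.

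Next I would reconcile the boundary condition appearing in~\eqref{GraphNeumann4NValdi}, which is posed on all of $V(G)\setminus\Omega$, with the abstract condition on $\partial_{m^G}\Omega$ in problem~\eqref{NNeumann4}. For $x\in V(G)\setminus(\Omega\cup\partial_{m^G}\Omega)$ one has $w_{xy}=0$ for every $y\in\Omega$, so the left-hand side of the boundary equation vanishes identically; consequently $\varphi$ is automatically required to vanish on this set and plays no role, and the problem~\eqref{GraphNeumann4NValdi} is equivalent to~\eqref{NNeumann4} in this context, with $\a_p$ satisfying~\eqref{ll002}--\eqref{llo2} by hypothesis.

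Then, since $\partial_{m^G}\Omega$ is finite and $m^G_x(\Omega) > 0$ for every $x\in\partial_{m^G}\Omega$ (by definition of the $m$-boundary), the quotient $\varphi/m^G_{(\cdot)}(\Omega)$ is a bounded function on $\partial_{m^G}\Omega$, so $\varphi \in L^{\infty}(\partial_{m^G}\Omega,\nu_G) = L^{m,\infty}(\partial_{m^G}\Omega,\nu_G)$ as already observed in the remark following the definition of $L^{m,\infty}$. This places us in the exact setting of Theorem~\ref{nsth01N}. Finally, since $u_0\in L^\infty(\Omega,\nu_G)\subset D(A^{m^G}_{\a_p,\varphi})$ by Theorem~\ref{Nremdom01ferrogen}, that theorem yields a unique strong solution of~\eqref{GraphNeumann4NValdi} together with the usual $L^q$-contraction principle.

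There is no real obstacle in this proof beyond the combinatorial observation that $\partial_{m^G}\Omega$ is finite; this is what makes the $L^{m,\infty}$ assumption automatic from the $L^\infty$ hypothesis on $\varphi$ and what allows Theorem~\ref{nsth01N} to apply without further work.
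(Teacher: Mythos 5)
Your proposal is correct and follows essentially the same route as the paper: the paper's proof is simply ``apply Theorem~\ref{nsth01N}'', relying on the observation (already recorded in the remark after the definition of $L^{m,\infty}(\partial_m\Omega,\nu)$) that for a locally finite graph with finite $\partial_{m^G}\Omega$ one has $L^{m,\infty}(\partial_{m^G}\Omega,\nu_G)=L^{\infty}(\partial_{m^G}\Omega,\nu_G)$, together with $L^\infty(\Omega,\nu_G)\subset D(A^{m^G}_{\a_p,\varphi})$ from Theorem~\ref{Nremdom01ferrogen}. You have merely made explicit the finiteness of $\partial_{m^G}\Omega$ and the reconciliation of the boundary condition on $V(G)\setminus\Omega$ with that on $\partial_{m^G}\Omega$, which the paper leaves implicit.
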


\bigskip

\noindent {\bf Acknowledgment.}The authors have been partially supported  by the Spanish MICIU and FEDER, project PGC2018-094775-B-100.
 The second author was also supported by the Spanish MICIU under grant BES-2016-079019, which is also supported by the European FSE.

\end{document}